\documentclass[11pt,a4paper,reqno]{amsart}
\usepackage[utf8]{inputenc}
\usepackage[T1]{fontenc}
\usepackage[UKenglish]{babel}
\usepackage[a4paper,margin=1in]{geometry}

\usepackage{amsmath,amsthm,amsfonts,amssymb}
\usepackage{mathrsfs,dsfont}
\usepackage{graphicx}
\usepackage{url}
\usepackage{verbatim}
\usepackage{xcolor}

\usepackage{hyperref}

\usepackage{marginnote}

\marginparwidth50pt 

\makeatletter
\def\namedlabel#1#2{\begingroup
    #2%
    \def\@currentlabel{#2}%
    \label{#1}\endgroup
}
\makeatother

\theoremstyle{plain}
\newtheorem{theorem}{Theorem}[section]
\newtheorem{corollary}[theorem]{Corollary}
\newtheorem{lemma}[theorem]{Lemma}
\newtheorem{proposition}[theorem]{Proposition}

\theoremstyle{definition}
\newtheorem{remark}[theorem]{Remark}
\newtheorem{example}[theorem]{Example}

\numberwithin{equation}{section}

\renewcommand\labelenumi{\textup{\alph{enumi})}}

\renewcommand\theenumi\labelenumi
\makeatletter\renewcommand{\p@enumii}{}\makeatother 

\renewcommand{\leq}{\leqslant}
\renewcommand{\le}{\leqslant}
\renewcommand{\geq}{\geqslant}
\renewcommand{\ge}{\geqslant}


\DeclareMathOperator{\spec}{spec}


\newcommand{\cD}{\mathcal{D}}

\newcommand{\R}{\mathds{R}}

\newcommand{\I}{\mathds{1}}

\newcommand{\N}{\mathds{N}}

\newcommand {\tki}[1]{{\widetilde{k}}^{(#1)}}

\newcommand {\kk}[1]{k^{(#1)}}

\def \D{\mathcal{D}}
\def \E{\mathcal{E}}

\def \Rdo {\mathbb{R}^d\setminus\{0\}}
\def \Rd {\R^d}

\def \NN {\mathbb{N}}
\def \tk{\widetilde{k}}
\def \tp{\widetilde{p}}
\def \tP{\widetilde{P}}
\def \eps{\varepsilon}
\def \nua{\nu^{(\alpha)}}
\def \pa{p^{(\alpha)}}
\def \Pa{P^{(\alpha)}}
\def \Ea{\E^{(\alpha)}}
\def \tpa{\widetilde{p}^{(\alpha)}}
\def \tPa{\widetilde{P}^{(\alpha)}}
\def \tE{\widetilde{\E}}
\def \tEa{\tE^{(\alpha)}}
\newcommand {\nur}{\nu}
\renewcommand{\d}{{\rm d}}
\newcommand{\spr}[1]{\left\langle #1 \right\rangle}
\newcommand{\norm}[2]{\big\| #1 \big\|_#2}

\begin{document}

\title[Fractional-type Schr\"odinger operators with singular potential]%
{Bound states and heat kernels for fractional-type Schr\"odinger operators with singular potentials}

\date{\today}

\author[T. Jakubowski]{Tomasz Jakubowski }
\address{
	Wroc{\l}aw University of Science and Technology
	Faculty of Pure and Applied Mathematics\\
	Wyb. Wyspia\'{n}skiego 27\\
	50-370 Wroc{\l}aw\\
	Poland
}
\email{tomasz.jakubowski@pwr.edu.pl}

\author[K. Kaleta]{Kamil Kaleta }
\address{
	Wroc{\l}aw University of Science and Technology
	Faculty of Pure and Applied Mathematics\\
	Wyb. Wyspia\'{n}skiego 27\\
	50-370 Wroc{\l}aw\\
	Poland
}
\email{kamil.kaleta@pwr.edu.pl}

\author[K. Szczypkowski]{Karol Szczypkowski }
\address{
	Wroc{\l}aw University of Science and Technology
	Faculty of Pure and Applied Mathematics\\
	Wyb. Wyspia\'{n}skiego 27\\
	50-370 Wroc{\l}aw\\
	Poland
}
\email{karol.szczypkowski@pwr.edu.pl}

\thanks{The first and the second author were partially supported by the grant 2015/18/E/ST1/00239 of National Science Centre, Poland.
The third author was partially supported through the DFG-NCN Beethoven Classic~3 programme,
contract no.\ 2018/31/G/ST1/02252 (National Science Center, Poland) and SCHI-419/111 (DFG, Germany).}

\begin{abstract}
We consider non-local Schr\"odinger operators $H=-L-V$ in $L^2(\R^d)$, $d \geq 1$, where the kinetic terms $L$ are pseudo-differential operators which are perturbations of the fractional Laplacian by bounded non-local operators and $V$ is the fractional Hardy potential. 
We prove pointwise estimates of eigenfunctions corresponding to negative eigenvalues and upper finite-time horizon estimates for heat kernels. We also analyze the relation between the matching lower estimates of the heat kernel and the ground state near the origin. Our results cover the relativistic Schr\"odinger operator with Coulomb potential.
\end{abstract}

\subjclass[2010]{47D08, 35J10, 35Q75, 81Q10, 60J35}

\keywords{ground state, eigenvalue, eigenfunction; heat kernel, relativistic Coulomb model, fractional Laplacian, fractional Hardy potential, L\'evy operator}

\maketitle

\section{Introduction}

Let $d\in \N:=\{1,2,\ldots\}$ and $\alpha\in (0,2 \wedge d)$. Recall that the fractional Laplacian $L^{(\alpha)}:= -(-\Delta)^{\alpha/2}$ is a pseudo-differential operator which is defined by 
\[
\widehat{L^{(\alpha)} f}(\xi) = - |\xi|^\alpha \widehat{f}(\xi), \quad f \in \cD(L^{(\alpha)}) := \left\{f \in L^2(\R^d): |\xi|^\alpha \widehat f(\xi)\in L^2(\R^d) \right\},
\]
see e.g.\ Kwaśnicki \cite{MK-2017}. It is known that $|\xi|^\alpha=\int_{\Rd \setminus \left\{0\right\}} (1-\cos(\xi\cdot y))\nua(y)\d y$, $\xi\in \Rd$,
where
\[
\nua(y)
=c_{d,\alpha}|y|^{-d-\alpha},\quad y \in \Rd \setminus \left\{0\right\}, \quad \text{and} \quad
c_{d,\alpha}:=\frac{ \alpha 2^{\alpha-1}\Gamma\big((d+\alpha)/2\big)}{\pi^{d/2}\Gamma(1-\alpha/2)}.
\]
In this paper we consider the class of L\'evy operators $L$  such that 
\[
\widehat{L f}(\xi) = - \psi(\xi) \widehat{f}(\xi), \quad f \in \cD(L) := \left\{f \in L^2(\R^d): \psi \widehat f \in L^2(\R^d) \right\}
\]
(see Jacob \cite{Jacob} and B\"ottcher, Schilling and Wang \cite{Bottcher_Schilling_Wang}), where
$$
\psi(\xi) = \int_{\R^d \setminus\{0\}} (1-\cos(\xi \cdot y) \nu(y) \d y, \quad \xi \in \R^d,
$$
and the L\'evy density $\nu$ is symmetric (i.e.\ $\nu(-x) = \nu(x)$) and satisfies the following assumption: 

\medskip

	\begin{itemize}
\item[\bfseries(\namedlabel{A1}{A1})] 
    The density $\nu$ is such that
		\begin{align*}
	\sigma: = \nua - \nur \geq 0
	\end{align*}
	and $\sigma(dx) = \sigma(x) dx$ is a finite measure.
\end{itemize}

\medskip

The main goal of this paper is to give pointwise estimates of $L^2$-eigenfunctions corresponding to negative eigenvalues  (below we call them \emph{bound state} eigenfunctions or just \emph{bound states}) and finite-time horizon heat kernel estimates for non-local Schr\"odinger operators 
\begin{align} \label{eq:def_H}
H=-L-V, \quad \text{where} \quad V(x) = \frac{\kappa}{|x|^{\alpha}}, \quad 0 < \kappa < \kappa^*= \frac{2^\alpha \Gamma\left(\frac{d+\alpha}{4}\right)^2}{\Gamma\left(\frac{d-\alpha}{4}\right)^2}.
\end{align}
The operator $H$ is defined in form-sense as a bounded below self-adjoint operator, see Section \ref{sec:SchOp_def}.
An important example is the \emph{relativistic Schr\"odinger operator with Coulomb potential}
\[
H = \sqrt{-\Delta+m^2} -m - \frac{\kappa}{|x|}.
\]
Here we set $d=3$, $\alpha = 1$ and, consequently, $\kappa^*= 2/\pi$. The Hamiltonian $H+m$ is known to provide one of possible descriptions (neglecting spin effects) of the energy of a relativistic particle with mass $m$ in the Coulomb field. It has been widely studied as an alternative to the Klein--Gordon and the Dirac theories, see Herbst \cite{IH-1977}, Weder \cite{Weder-1975}, Daubechies and Lieb \cite{ID-EL-1983}, and Daubechies \cite{ID-1984}. The other examples of operators $L$ (L\'evy densities $\nu$) satisfying our assumption \eqref{A1} are given in Example \ref{ex:example_nu}. 

The potential $V$ in \eqref{eq:def_H} is the Hardy potential corresponding to fractional Laplacian $L^{(\alpha)}$ and $\kappa^*$ is known to be the critical constant in the fractional Hardy inequality
\begin{align} \label{eq:frac_Hardy}
\Ea[f] \geq \kappa^* \int_{\R^d} \frac{f(x)^2}{|x|^{\alpha}} \d x, \quad f \in L^2(\R^d);
\end{align}
$\Ea$ denotes the quadratic form of the operator $L^{(\alpha)}$, see Herbst \cite{IH-1977}, Frank and Seiringer \cite{RF-RS-2008}, Bogdan, Dyda and Kim \cite{KB-BD-PK-2016}. As shown in \cite[Section 4]{KB-BD-PK-2016} and \cite[Section 2.2]{KB-TG-TJ-DP-2019}, for any $0< \kappa \leq \kappa^*$ there exists a unique number $\delta$ such that
\begin{align} \label{eq:delta_def}
0 < \delta \leq \frac{d-\alpha}{2} \quad \text{and} \quad \kappa = \frac{2^\alpha \Gamma\left(\frac{\alpha+\delta}{2}\right) \Gamma\left(\frac{d-\delta}{2}\right)}{\Gamma\left(\frac{\delta}{2}\right)\Gamma\left(\frac{d-\alpha-\delta}{2}\right)}.
\end{align}

Our first main result gives pointwise estimates of eigenfunctions corresponding to negative eigenvalues for Schr\"odinger operators $H$ given by \eqref{eq:def_H}. It summarizes Proposition \ref{lem:est_at_zero} and Theorem \ref{th:EFest} which are proven below. Recall that the heat kernel $p(t,x,y):= p_t(y-x)$ and resolvent kernel $g_{\lambda}(x,y):= g_{\lambda}(y-x)$ of the operator $L$ are given by 
\[
p_t(x):=(2\pi)^{-d}\int_{\R^d} e^{-t\psi(\xi)}e^{-ix\cdot\xi}\d \xi, \quad t>0,\ x \in \R^d,
\] 
and
\[
g_{\lambda}(x) = \int_0^{\infty} e^{-\lambda t} p_t(x)\d t, \quad \lambda>0,\ x \in \R^d,
\]
see Section \ref{sec:free_operators} for more details. 

\begin{theorem}\label{th:bound_states_main}
Let \eqref{A1} hold and let $\kappa< \kappa^*$. Let $\varphi \in L^2(\R^d)$ be such that 
\begin{align} \label{eq:eigenequation}
H\varphi = E \varphi, \quad \text{for a number} \ E<0.
\end{align}
Then $\varphi$ has a version which is continuous on $\R^d \setminus \left\{0\right\}$ and satisfies the following estimates.
\begin{itemize}
\item[(a)] For every $R>0$ there exists $c>0$ such that
\[
|\varphi(x)| \leq c e^{|\sigma|+E} |x|^{-\delta}, \quad |x| \leq R, \ x \neq 0,
\]
where $\delta$ is determined by \eqref{eq:delta_def}; the constant $c$ depends neither on $\varphi$, $E$ nor $\sigma$. 

\item[(b)] For every $\eps \in (0,|E| \land 1)$ there is $R=R(\eps) \geq 1$ and $c=c(\eps)$ such that
\begin{align}\label{eqth:EFest_upper}
	|\varphi(x)| \le c \sup_{|y| \le R} g_{|E|-\eps}(x-y), \qquad x \in \Rd \setminus \left\{0\right\}. 
\end{align}
Furthermore, if $\varphi$ is a ground state (i.e.\ \eqref{eq:eigenequation} holds with $E = \inf \spec(H) <0$), then there is $\widetilde c>0$ such that
\begin{align}\label{eqth:EFest_lower}
	\varphi(x) \ge \widetilde c \inf_{|y| \le 1} g_{|E|}(x-y), \qquad x \in \Rd \setminus \left\{0\right\}. 
\end{align}
\end{itemize}
\end{theorem}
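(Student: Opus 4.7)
The backbone of the proof is a resolvent representation of the eigenfunction. Rearranging $H\varphi = E\varphi$ as $(|E|-L)\varphi = V\varphi$ (with $|E|>0$, so $|E|-L$ is invertible on $L^2(\Rd)$) and using that $(|E|-L)^{-1}$ has convolution kernel $g_{|E|}$, one obtains, after a preliminary check that $V\varphi \in L^1_{\loc}(\Rd)$,
\[
\varphi(x)\,=\,\int_{\Rd} g_{|E|}(x-y)\,V(y)\,\varphi(y)\,\d y, \qquad x \in \Rdo.
\]
Continuity of $\varphi$ on $\Rdo$ then follows by dominated convergence together with the continuity of $g_{|E|}$ away from the origin.

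\textbf{Near-origin bound, part (a).} I would compare $\varphi$ to the Hardy barrier $|x|^{-\delta}$. The exponent $\delta$ in \eqref{eq:delta_def} is exactly the one for which $L^{(\alpha)}|x|^{-\delta} = -V(x)|x|^{-\delta}$ on $\Rdo$ (classical fractional Hardy calculus, cf.\ \cite{KB-BD-PK-2016}). Writing $L = L^{(\alpha)} - S$ with $Sf(x) = \int [f(x+y)-f(x)]\sigma(y)\,\d y$, the assumption \eqref{A1} gives $\|S\| \leq 2|\sigma|$, and $H = H^{(\alpha)} + S$ for $H^{(\alpha)} := -L^{(\alpha)} - V$. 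Inserting the identity $\varphi = e^{tE}e^{-tH}\varphi$ and Duhamel-expanding $e^{-tH}$ in the bounded perturbation $S$ around $e^{-tH^{(\alpha)}}$ gives, term by term, kernel bounds proportional to $|x|^{-\delta}$, where the $|x|^{-\delta}$ factor comes from the small-scale behavior of the $H^{(\alpha)}$-heat kernel provided by the Hardy calculus. Summing the resulting Dyson series using $\|S\|\leq 2|\sigma|$ produces the prefactor $e^{t(|\sigma|+E)}|x|^{-\delta}$; one then takes $t=1$ to obtain (a).

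\textbf{Upper bound, part (b).} Splitting the integral representation at $|y|=R$,
\[
|\varphi(x)| \,\leq\, \int_{|y|\leq R}g_{|E|}(x-y)\,V(y)|\varphi(y)|\,\d y \,+\, \int_{|y|>R}g_{|E|}(x-y)\,V(y)|\varphi(y)|\,\d y.
\]
By part (a), $V|\varphi|\in L^1(B_R)$, so the first piece is bounded by $\bigl(\sup_{|y|\leq R}g_{|E|}(x-y)\bigr)\|V\varphi\|_{L^1(B_R)}$, and monotonicity $g_{|E|}\leq g_{|E|-\eps}$ casts it in the desired form. For the tail, choose $R=R(\eps)\geq 1$ large enough that $V(y)\leq\eps$ on $\{|y|>R\}$; iterating the integral equation produces a Neumann-type series whose $n$-th term picks up an extra factor $\eps^n$ from the $V$-weights outside $B_R$. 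Summing this geometric series amounts, via the resolvent identity $g_{|E|-\eps} - g_{|E|} = \eps\, g_{|E|-\eps} \ast g_{|E|}$, to replacing $g_{|E|}$ by $g_{|E|-\eps}$, which yields \eqref{eqth:EFest_upper}.

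\textbf{Lower bound for the ground state; main obstacle.} For the ground state $\varphi > 0$, continuous on $\Rdo$, $\varphi$ is bounded below on any fixed annulus, say $\{1/2\leq|y|\leq 1\}$. Restricting the integral representation to this annulus and using non-negativity of all ingredients gives
\[
\varphi(x) \,\geq\, \Big(\inf_{|y|\leq 1}g_{|E|}(x-y)\Big)\int_{1/2\leq|y|\leq 1}V(y)\varphi(y)\,\d y,
\]
and the last integral is a strictly positive constant, proving \eqref{eqth:EFest_lower}. The principal obstacle is part (a): extracting the sharp Hardy exponent $\delta$ with constants uniform in $\sigma$ and $E$ forces the argument through the $H^{(\alpha)}$-semigroup, whose short-time $|x|^{-\delta}$ behavior is available in closed form, coupled with a carefully controlled Duhamel expansion in $S$, rather than by a direct attack on $H$; handling the borderline singularity of $V$ relative to the kinetic term $L$ is the source of most of the technical difficulty.
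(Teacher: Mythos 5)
Your outline of part (b) matches the paper's argument (split the resolvent representation at $|y|=R$ with $V\le\eps$ outside, iterate, resum via $g_{|E|-\eps}=\sum_n\eps^n g_{|E|}^{*(n+1)}$; for the ground state, drop to a region where $V\varphi>0$). But there are two genuine gaps, both in the part you yourself single out as the crux.

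First, the starting identity $\varphi=G_{|E|}(V\varphi)$ is not obtained by ``rearranging $(|E|-L)\varphi=V\varphi$'': $H$ is only a form sum, and $V\varphi$ need not lie in $L^2$ (near the origin $V\varphi\sim|x|^{-\alpha-\delta}$ with $2(\alpha+\delta)>d$ possible), so the operator-theoretic inversion of $|E|-L$ on $V\varphi$ is not available. The paper first proves that $e^{-tH}$ coincides with the integral operators $\tP_t$ built from the perturbation series (Theorem \ref{th:forms_equal}, via the quadratic-form analysis of Section \ref{sec:q_forms}); only then does the Duhamel formula \eqref{eq:perturbation}, multiplied by $e^{-\eps t}$ and integrated in $t$, yield the inequality $|\varphi|\le G_{|E|}(V|\varphi|)$ (Lemma \ref{lem:eigf_est}) — and even then only an inequality for signed $\varphi$, with equality reserved for $\varphi\ge0$. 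Your proposal assumes the identity for general $\varphi$ and skips the identification of $e^{-tH}$ with an integral operator, without which neither the pointwise eigenequation nor the continuity claim can be formulated.

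Second, the Dyson expansion of $e^{-tH}$ around $e^{-tH^{(\alpha)}}$ in the bounded difference $S$ does not deliver part (a). The bound $\|S\|\le 2|\sigma|$ is an $L^2$ operator-norm bound and carries no pointwise kernel information; to extract the factor $|x|^{-\delta}$ one must apply the $L^2\!\to\!$ pointwise smoothing of $e^{-s H^{(\alpha)}}$ to the leftmost factor, which costs $s^{-(d-\delta)/\alpha}$ and produces a divergent time integral $\int_0^t s^{-(d-\delta)/\alpha}\,\d s$ whenever $d-\delta\ge\alpha$ (the generic case). Propagating the weight $|x|^{-\delta}$ through convolution with the merely integrable, possibly unbounded density $\sigma$ is equally uncontrolled. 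The paper's decisive idea, absent from your proposal, is to compare at the level of the \emph{free} kernels: the compound-Poisson factorization \eqref{eq:relation_kernels} gives $p_t\le e^{|\sigma|t}\pa_t$ pointwise, and since the perturbation series \eqref{eq:pert_series} is monotone in the base kernel, $\tp(t,x,y)\le e^{|\sigma|t}\tpa(t,x,y)$ (see \eqref{eq:est_by_stable_hardy}). Part (a) then follows from $\varphi=e^{Et}\tP_t\varphi$ at $t=1$ and the smoothing estimate of Lemma \ref{eq:smooth}, with the clean prefactor $e^{|\sigma|}$ and a constant independent of $\sigma$, $E$ and $\varphi$ — none of which the Dyson route produces.
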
 

Under the assumption \eqref{A1}, the operators $L$ and $L^{(\alpha)}$ are close to each other in the sense that their Fourier multipliers are asymptotically equivalent at infinity. Indeed, we have 
\[
\lim_{|\xi| \to \infty} \frac{\psi(\xi)}{|\xi|^{\alpha}} = 1,
\]
which is a direct consequence of
\[
 \psi(\xi) \leq |\xi|^{\alpha} = \psi(\xi) + \int_{\R^d \setminus\{0\}} (1-\cos(\xi \cdot y) \sigma(y) \d y \leq \psi(\xi) + |\sigma|, \quad \xi \in \R^d.
\]
However, the tail of $\nu(x)$ can be essentially lighter at infinity than that of $\nua(x)$, and therefore some analytic and spectral properties of $H=-L-V$ and $-L^{(\alpha)}-V$ may differ as well. The crucial example is the structure of the spectrum $\spec(H)$ of $H$. First recall that the discrete spectrum $\spec_{d}(H)$ consists of all isolated eigenvalues of $H$ of finite multiplicity with no accumulation points, and the essential spectrum $\spec_{e}(H)$ can be defined as $\spec_{e}(H):= \spec(H) \setminus \spec_{d}(H)$. It follows from \cite[Theorems 3.6, 3.7]{Weder-1975} (see also the more explicit statement in \cite[Proposition 4.1]{KK-JL-2016} which is dedicated to self-adjoint L\'evy operators with real-valued potentials) that $\spec_{e}(H) = \spec_{e}(-L) = [0,\infty)$. This implies that $\spec_{d}(H) \subset (-\infty, 0)$. Together with \eqref{eq:frac_Hardy}, it gives that $\spec_{d}(-L^{(\alpha)}-V) = \emptyset$. On the other hand, the discrete spectra $\spec_{d}(H)$ can be non-empty for Schr\"odinger operators $H$ as in \eqref{eq:def_H} where the fractional Laplacian $L^{(\alpha)}$ is replaced by the proper perturbation $L$. This is exactly the situation in which our Theorem \ref{th:bound_states_main} applies directly. 

Here, the key example is the relativistic Coulomb model which was mentioned above. It is known to produce infinitely many negative eigenvalues; the lowest eigenvalue is
nondegenerate, and the associated ground state is strictly positive, see \cite{IH-1977, Weder-1975, ID-EL-1983}. The $L^2$-upper estimates of eigenfunctions for this model have been obtained by Nardini \cite{FN-1986} (see also \cite{FN-1988}). Our present Theorem \ref{th:bound_states_main} gives pointwise estimates at infinity and at zero, and together with Theorem \ref{th:heat_kernel_and_ground_state} which is stated below, it also gives two-sided bounds for the ground state. To the best of our knowledge, such a result has not been known before. Precise statements and the detailed discussion of this example are postponed to Section \ref{sec:relativistic} below. We want to mention here that the assumption \eqref{A1} is motivated by the paper of Ryznar \cite{MR-2002}, who proposed such an approach in the study of the potential theory of relativistic stable processes. At the level of operators this idea was one of basic tools in the study of the stability of relativistic matter -- in that theory, for some estimates, the relativistic operator $\sqrt{-\Delta+m^2} -m$ can be replaced by its bounded perturbation $\sqrt{-\Delta}$, see Fefferman and de la Llave \cite{CF-RL-1986}, Lieb and Yau \cite{EL-YH-1988}, Frank, Lieb and Seiringer \cite{RF-EL-RS-2007}, and the monograph by Lieb and Seiringer \cite{EL-RS-2010}. We also refer to the recent paper by Ascione and L\H orinczi \cite{GA-JL-2022} for an application to the zero energy inverse problems for relativistic Schr\"odinger operators.

The decay rates of bound states at infinity for Schr\"odinger operators involving the Euclidean Laplacian are now a classical topic, see Agmon \cite{SA-1982,SA-1985}, Reed and Simon \cite{MR-BS-1978}, and Simon \cite{BS-1982}. In the non-local setting this problem has been studied for Schr\"odinger operators with less singular decaying potentials, see \cite{RC-WM-BS-1990,KK-JL-2016} (see also \cite{KK-JL-2020} for estimates in the zero energy case). Pointwise estimates as in \eqref{eqth:EFest_upper} and \eqref{eqth:EFest_lower} have been first established by Carmona, Masters and Simon in their seminal paper \cite{RC-WM-BS-1990}. The authors identified the decay rates at infinity of eigenfunctions corresponding to negative eigenvalues for a large class of L\'evy operators $L$ perturbed by Kato-decomposable potentials $V=V_{+}-V_{-}$ (also called Kato-Feller potentials, see \cite{Demuth_Casteren}) for which the negative part $V_{-}$ is in the Kato class associated with $L$, and the positive part $V_{+}$ is locally in that class. Recall that if $V(x)=\kappa |x|^{-\beta}$, for $\kappa, \beta >0$, then $V$ belongs to the Kato class of $L$ if and only if $\beta < \alpha$, see e.g.\ \cite[Section 3]{TG-KK-PS-2022}. Therefore, the singularity of the potential in \eqref{eq:def_H} is critical for these operators. Part (b) of Theorem \ref{th:bound_states_main} states that the estimates of Carmona, Masters and Simon extend to such a case, i.e.\ the criticality of the potential does not change the decay rates of bound states at infinity. The difference is that in our present setting the eigenfunctions can be singular at zero, while for the Kato class potentials they are typically bounded and continuous on $\R^d$. Part (a) of Theorem \ref{th:bound_states_main} gives the upper estimate of bound states around zero for potentials with critical singularity. The matching lower bound for the gound state is discussed in Theorem \ref{th:heat_kernel_and_ground_state} below. Such pointwise estimates for L\'evy operators with the fractional Hardy potential have not been known before. We remark that the estimates of the resolvent kernels for some special examples of L\'evy operators can be extracted e.g.\ from \cite{RC-WM-BS-1990,KK-JL-2016}. 

The methods of the paper \cite{RC-WM-BS-1990} are probabilistic -- they are based on the Feynman--Kac formula, martingales and the probabilistic potential theory. These tools are not available for more singular potentials as that in \eqref{eq:def_H}, see the monograph by Demuth and van Casteren \cite{Demuth_Casteren} for a systematic introduction to the stochastic spectral analysis of Feller operators. It makes the problem we treat in this paper much more difficult. We propose a new, fully analytic approach, which is completely different. The methods we use are based on the technique of perturbations of integral kernels developed by Bogdan, Hansen and Jakubowski \cite{KB-WH-TJ-2008}. We first construct the kernel $\tp(t,x,y)$ which is a perturbation of the heat kernel $p(t,x,y)$ by Hardy potential $V$, and analyze the smoothing properties of the corresponding semigroup of operators $\{\tP_t\}_{t>0}$ (Section \ref{sec:pert}). In Section \ref{sec:q_forms} we study the properties of quadratic forms. We describe in detail the relation between forms of $L$ and $L^{(\alpha)}$, and show that the form of the Schr\"odinger operator $H$ can be identified with quadratic form of the semigroup $\{\tP_t\}_{t>0}$. Consequently, by uniqueness, $e^{-tH} = \tP_t$, $t>0$ on $L^2(\R^d)$. In particular, $e^{-tH}$ are integral operators with kernels $\tp(t,x,y)$, which is crucial for our further investigations. Indeed, it provides direct access to properties of operators $e^{-tH}$. This is given in Theorem \ref{th:forms_equal}. Finally, all these partial results are applied in Section \ref{sec:ef_est} to establish the pointwise estimates for bound states. This part starts with the perturbation formula and combines some direct observations with the self-improving estimate which involves the convolutions of resolvent kernels. Our proofs do not require sharp estimates of the kernel $\tp(t,x,y)$. On the other hand, we use in an essential way the estimates of the heat kernel for the fractional Laplacian with Hardy potential which have been obtained just recently by Bogdan, Grzywny, Jakubowski and Pilarczyk \cite{KB-TG-TJ-DP-2019}. 

The second part of the paper is devoted to estimates of the kernel $\tp(t,x,y)$. 
Here, and in what follows, we write $f \asymp g$ on $A$, if $f,g \geq 0$ on $A$ and there is a (comparability) constant $c \geq 1$ such that
$c^{-1} g \leq f \leq c g$ holds on $A$ (``$\stackrel{c}{\asymp}$'' means the comparison with the constant $c$).
Also, we use ``$:=$'' to indicate the definition.
As usual, $a \land b := \min\{a,b\}$, $a \vee b := \max\{a,b\}$. 

In order to get sharp results, we need to impose the following additional assumption on the L\'evy density $\nu$.

\medskip

	\begin{itemize}
\item[\bfseries(\namedlabel{A2}{A2})] 
    There exists a non-decreasing profile function $f \colon (0,\infty) \to (0,\infty)$ such that
	\begin{align} \label{eq:profile}
	\nur(x) \asymp f(|x|), \qquad x \in \Rdo,
	\end{align}
	and the condition
	\begin{align} \label{eq:DJP}
	\sup_{x \in \R^d} \int_{\R^d} \frac{f_1(|x-y|)f_1(|y|)}{f_1(|x|)} < \infty,
	\end{align}
	where $f_1:= f \wedge 1$, holds. 
\end{itemize}

\medskip
As proved by Kaleta and Sztonyk \cite{KK-PS-2017}, the condition \eqref{eq:DJP} provides a minimal regularity of the profile $f$ which is required for comparability $p_t(x) \asymp t \nu(x)$, for large $x \in\R^d$. Easy-to-check sufficient (and necessary) conditions for it can be found in \cite[Section 3.1]{KK-RS-2020} and  \cite[Proposition 2]{KK-PS-2017}. In particular, all the examples of L\'evy densities $\nu$ presented in Example \ref{ex:example_nu} satisfy \eqref{A2}. 

Our second main result is the following theorem which gives the finite-time horizon upper estimate of $\tp(t,x,y)$. The construction of this kernel in Section  \ref{sec:pert} was performed for the full range of $\kappa \leq \kappa^*$. Our next theorem also covers the critical case $\kappa = \kappa^*$.

\begin{theorem} \label{th:heat_kernel}
Let \eqref{A1} and \eqref{A2} hold and let $\kappa \leq \kappa^*$. Then for every $T>0$ there exists a constant $c=c(T)$ such that
\[
\tp(t,x,y) \leq c \left(1+\frac{t^{\delta/\alpha}}{|x|^{\delta}} \right)\left(1+\frac{t^{\delta/\alpha}}{|y|^{\delta}} \right) p(t,x,y), \quad x,y \in \R^d \setminus \left\{0\right\}, \ t \in (0,T].
\]
\end{theorem}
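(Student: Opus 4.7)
The plan is to work with the Dyson series representation of $\tp$ constructed in Section \ref{sec:pert}, namely $\tp(t,x,y) = \sum_{n \ge 0} \tp_n(t,x,y)$, where $\tp_0 = p$ and
\[
\tp_{n+1}(t,x,y) = \int_0^t \int_{\R^d} p(t-s,x,z)\, V(z)\, \tp_n(s,z,y)\, \d z\, \d s,
\]
and to show by induction on $n$ that each iterate satisfies $\tp_n(t,x,y) \le (K(T))^n\, W(t,x)\, W(t,y)\, p(t,x,y)$, where $W(t,z) := 1 + t^{\delta/\alpha}/|z|^\delta$.

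The central technical ingredient is a 3P-type inequality for the free kernel $p$ with the Hardy weight: for every $T > 0$ there is $K = K(T) > 0$ such that, for $t \in (0,T]$ and $x, y \in \Rdo$,
\[
\int_0^t \int_{\R^d} p(t-s,x,z)\, V(z)\, W(t-s,z)\, W(s,z)\, p(s,z,y)\, \d z\, \d s \le K\, W(t,x)\, W(t,y)\, p(t,x,y).
\]
To prove this I would combine three ingredients: (i) the analogous 3P-inequality for the stable kernel $p^{(\alpha)}$ from \cite{KB-TG-TJ-DP-2019}; (ii) the subordination identity $p^{(\alpha)}_t = q_t \ast p_t$, where $q_t$ is the distribution of the compound Poisson process with intensity $\sigma$, which yields the one-sided pointwise comparison $p_t(x) \le e^{t|\sigma|} p^{(\alpha)}_t(x)$; (iii) the sharp tail comparability $p_t(x) \asymp t\nu(x)$ for large $|x|$ afforded by \eqref{A2} via Kaleta--Sztonyk \cite{KK-PS-2017}, together with the standard on-diagonal bound $p_t(x) \asymp t^{-d/\alpha}$ for small $|x|$. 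Substituting $p \le e^{t|\sigma|} p^{(\alpha)}$ in the LHS and invoking (i) produces a bound $\le C\, W(t,x)\, W(t,y)\, p^{(\alpha)}(t,x,y)$; (iii) together with the localization of $V$ near the origin then allows one to upgrade this to the same bound with $p(t,x,y)$ on the right.

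Given the 3P-inequality, the monotonicity $W(s,\cdot) \le W(t,\cdot)$ for $s \le t$ makes the induction routine, and for $T$ small the resulting geometric series sums to the required estimate; the extension to arbitrary $t \le T$ follows from the Chapman--Kolmogorov identity for $\tp$ (the semigroup property $\tP_t = e^{-tH}$ established in Section \ref{sec:q_forms}). The main obstacle is the 3P-inequality itself: the naive combination of (i)+(ii) gives $p^{(\alpha)}(t,x,y)$ on the right-hand side, which in general cannot be replaced by $p(t,x,y)$ because the ratio $p^{(\alpha)}/p$ is unbounded (e.g.\ it grows like $e^{m|x-y|}$ in the relativistic model). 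Handling this subtlety requires a case split on $|z|$ relative to $t^{1/\alpha}$: in the ``inner'' region $V$ is large but $p$ and $p^{(\alpha)}$ are comparable, so the BGJP 3P transfers cleanly; in the ``outer'' region $V$ is bounded and the tail comparability from (iii) supplies the matching $p(t,x,y)$ directly. The critical case $\kappa = \kappa^*$ (where $\delta = (d-\alpha)/2$) is borderline in the $z$-integration near the origin and requires particular care, but is covered by the corresponding borderline argument of \cite{KB-TG-TJ-DP-2019}.
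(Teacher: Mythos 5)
There is a genuine gap, and it sits at the heart of your plan: the claim that the weighted 3P-inequality holds with a constant $K(T)<1$ for small $T$, so that $\sum_n \tp_n \le \sum_n K^n\,W W p$ converges geometrically. The Hardy potential is exactly critical for this: in the pure stable case the quantity you need to control,
$\sup_{t\le T,\,x,y}\bigl(W(t,x)W(t,y)p^{(\alpha)}(t,x,y)\bigr)^{-1}\int_0^t\!\!\int p^{(\alpha)}(t-s,x,z)V(z)W(s,z)p^{(\alpha)}(s,z,y)\,\d z\,\d s$,
is invariant under the parabolic scaling (because $V$, $W$ and $p^{(\alpha)}$ all scale exactly), so $K$ does not shrink as $T\to0$; and under \eqref{A1} the general case is locally comparable to the stable one, so nothing improves. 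Worse, $K\ge1$ is forced: for $|x|\ll t^{1/\alpha}$ the $n$-th Dyson term behaves like $\frac{(\delta L)^n}{n!}\,p$ with $L=\log(t^{1/\alpha}/|x|)$, and near its peak $n\approx\delta L$ this single term is already comparable (by Stirling) to the full sum $e^{\delta L}p\asymp W(t,x)p$; a bound $\tp_n\le K^n W(t,x)W(t,y)p$ at that $n$ then requires $K^{\delta L}\ge c$ uniformly in $L\to\infty$, i.e.\ $K\ge1$. So the geometric summation cannot close, and ``take $T$ small'' does not rescue it. (This is precisely why the Kato-class machinery, where $\int_0^t\!\int p\,V\to0$ as $t\to0$, is unavailable here.)

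The paper avoids the series entirely. It first imports the already-proved two-sided bound for the stable Hardy kernel $\tpa$ through the domination $\tp\le e^{|\sigma|t}\tpa$ (Corollary \ref{cor:tildepest} plus \eqref{eq:domination_free_kernels}), which settles the whole region $|x|,|y|\le R$ at once because there $\nu\asymp\nua$ and hence $p\asymp p^{(\alpha)}$ (Lemma \ref{lem:small_args}). For $|x|\vee|y|$ large it applies the Duhamel formula \emph{once}, truncating the potential at a radius $R_0=R_0(T)$ chosen so that the outer contribution, where $V\le\kappa R_0^{-\alpha}$, produces a factor $t\kappa R_0^{-\alpha}<\tfrac12$ that can be absorbed into the left-hand side \eqref{eq:Duhamelcut}; the inner contribution is then controlled not by a 3P-inequality but by the mass bound $\int_0^t\!\int\tp(t-s,x,z)V(z)\,\d z\,\d s\le cH(t,x)$ (obtained by integrating the perturbation formula and using \eqref{eq:upper_est_by_H}) together with the sub-convolution property $\nu(y-z)\nu(z-x)\le c\,\nu(y-x)$ for $|y-z|,|z-x|\ge1$, which is where \eqref{eq:DJP} really enters (Lemmas \ref{lem:onelarge} and \ref{lem:final_step}). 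If you want to keep an iterative scheme, you would have to redo the bootstrap of \cite{KB-TG-TJ-DP-2019} from scratch rather than sum a geometric series; the truncated-Duhamel route above is the efficient way to transfer their result to the perturbed kernel $\tp$.
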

It is clear that $\tp(t,x,y) \geq p(t,x,y)$, $x, y \in \R^d$, $t >0$. This means that the estimate in the above theorem is sharp at least for the case when both $|x|$ and $|y|$ are large. One can conjecture that the two-sided estimate as in Theorem \ref{th:heat_kernel} holds true for the full range of spatial variables. However, a general argument leading to the lower estimate in our present generality seems to be not available at the moment. On the other hand, such a conjecture is supported by our results in \cite{TJ-KK-KS-2022}, where we have proven the two-sided estimate of this form for the case when $L=-(-\Delta+m^{2/\alpha})^{\alpha/2}+m$, $\alpha \in (0,2)$, $m>0$. This is an example of the operator covered by our assumptions \eqref{A1}-\eqref{A2}. Such two-sided bounds, for $t>0$, have been first obtained for the fractional Laplacian $L^{(\alpha)}$ with Hardy potential \cite{KB-TG-TJ-DP-2019}. The proof of Theorem \ref{th:heat_kernel} uses in an essential way the results from \cite{KB-TG-TJ-DP-2019}, the properties of the density $\nu(x)$ and the sharp bound
\begin{align}\label{eq:pt_est_intro}
p(t,x,y) \asymp t^{-d/\alpha} \wedge t\nu(y-x), \quad t \in (0,T], \ x,y \in \R^d,
\end{align}
which was established in \cite{KK-PS-2017} (see Lemma \ref{lem:properties_pt_nu} below for details). Observe that \eqref{eq:pt_est_intro} makes the estimate in Theorem \ref{th:heat_kernel} explicit. 

Our last result relates the lower estimate of the kernel $\tp(t,x,y)$ with the lower bound of the ground state at zero for $\kappa <\kappa^*$.

\begin{theorem} \label{th:heat_kernel_and_ground_state}
Let \eqref{A1} and \eqref{A2} hold and let $\kappa <\kappa^*$. Let $\delta$ be the number determined by \eqref{eq:delta_def}. Assume that there exists $\varphi \in L^2(\R^d)$ such that $H \varphi = E \varphi$, where $E = \inf \sigma(H) <0$, i.e.\ the ground state of $H$ exists. Then the following statements are equivalent.
\begin{itemize}
\item[(a)] For every $T>0$ there exists a constant $c=c(T)$ such that
\[
\tp(t,x,y) \geq c \left(1+\frac{t^{\delta/\alpha}}{|x|^{\delta}} \right)\left(1+\frac{t^{\delta/\alpha}}{|y|^{\delta}} \right) p(t,x,y), \quad |x| \wedge |y| \leq t^{1/\alpha}, \ x,y \neq 0, \ t \in (0,T].
\]

\item[(b)] For every $R>0$ there exists a constant $c>0$ such that
\[
\varphi(x) \geq c|x|^{-\delta}, \quad |x| \leq R, \ x \neq 0.
\]
\end{itemize}
\end{theorem}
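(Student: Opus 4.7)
The plan is to exploit the eigenfunction identity $e^{-t|E|}\varphi(x)=\tP_t\varphi(x)=\int\tp(t,x,y)\varphi(y)\,dy$ in both directions, using in an essential way the upper estimate of Theorem~\ref{th:heat_kernel}, the sharp free-kernel bound $p(t,x,y)\asymp t^{-d/\alpha}\wedge t\nu(y-x)$ from \eqref{eq:pt_est_intro}, and the $\varphi$-estimates of Theorem~\ref{th:bound_states_main}. Throughout, $\tp$ is positive, symmetric and jointly continuous on $(0,T]\times(\Rdo)^2$, while $\varphi>0$ on $\Rdo$ since it is a ground state.

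\emph{Direction (a) $\Rightarrow$ (b).} Fix $R>0$ and pick $t_0\in(0,T]$ with $t_0^{1/\alpha}\ge 2R$. On the annulus $A=\{y\colon t_0^{1/\alpha}\le |y|\le 2t_0^{1/\alpha}\}$, positivity and continuity of the ground state provide $c_A>0$ with $\varphi\ge c_A$ on $A$. For $|x|\le R$ and $y\in A$ we have $|x-y|\asymp t_0^{1/\alpha}$, whence $p(t_0,x,y)\asymp t_0^{-d/\alpha}$, and the $y$-factor in hypothesis (a) is $\asymp 1$. Restricting the identity $\varphi(x)=e^{-t_0|E|}\int\tp(t_0,x,y)\varphi(y)\,dy$ to $y\in A$ and inserting (a) then yields $\varphi(x)\gtrsim |x|^{-\delta}$ on $|x|\le R$.

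\emph{Direction (b) $\Rightarrow$ (a).} By symmetry of $\tp$ it suffices to prove the lower bound for $|x|\le t^{1/\alpha}$ and $y\in\Rdo$. Combining hypothesis (b) with the global lower estimate of Theorem~\ref{th:bound_states_main}(b) yields a pointwise lower bound on $\varphi$ of the expected shape: singular like $|z|^{-\delta}$ near the origin and of resolvent type at infinity. The eigen-identity together with the upper bound of Theorem~\ref{th:heat_kernel} and the upper estimate of Theorem~\ref{th:bound_states_main}(a) then shows that the integral of $\tp(t,x,z)\varphi(z)$ over $\{|z|\le t^{1/\alpha}\}$ is at most $C|x|^{-\delta}$ for $|x|\le t^{1/\alpha}$, while the left-hand side $e^{-t|E|}\varphi(x)\asymp |x|^{-\delta}$ forces nontrivial mass on the complementary region. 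Testing the identity against indicators of small balls $B(z_0,r)$ with $|z_0|\asymp t^{1/\alpha}$, together with the Harnack-type continuity of $\tp(t,x,\cdot)$ inherited from $p$ via the perturbation series of Section~\ref{sec:pert}, converts this integral information into the pointwise bound
\[
\tp(t,x,z_0)\gtrsim \left(1+\frac{t^{\delta/\alpha}}{|x|^\delta}\right)p(t,x,z_0), \qquad |z_0|\asymp t^{1/\alpha}.
\]
Finally, the semigroup identity $\tp(2t,x,y)=\int\tp(t,x,z)\tp(t,z,y)\,dz$, restricted to the annulus $|z|\asymp t^{1/\alpha}$ and combined with $\tp(t,z,y)\ge p(t,z,y)$, propagates the boost factor at $x$ through to arbitrary $y\ne 0$; the factor of $2$ in $t$ is absorbed into the constant $c(T)$.

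The main obstacle is the third step of the second direction: converting the scalar integral identity into a genuinely pointwise lower bound on $\tp$. The decisive input is the exact matching between the Hardy-type singularity $|x|^{-\delta}$ of $\varphi$ and the boost factor $(1+t^{\delta/\alpha}/|x|^\delta)$ of Theorem~\ref{th:heat_kernel}. Moreover, under \eqref{A1} the short-range behaviour of $L$ reduces, up to a bounded perturbation controlled by $|\sigma|$, to that of the fractional Laplacian analysed in Bogdan--Grzywny--Jakubowski--Pilarczyk~\cite{KB-TG-TJ-DP-2019}, whose heat-kernel bounds already play a central role in the proof of Theorem~\ref{th:heat_kernel}.
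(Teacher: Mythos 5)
Your direction (a) $\Rightarrow$ (b) is fine and is essentially the paper's own argument: insert the lower bound (a) into the eigen-identity $\varphi(x)=e^{Et}\int\tp(t,x,y)\varphi(y)\,\d y$ at a fixed time comparable to $R^\alpha$, restrict to a region where the free kernel is $\asymp t^{-d/\alpha}$, and use positivity of the ground state. The problems are in (b) $\Rightarrow$ (a), whose skeleton (eigen-identity, concentration at scale $t^{1/\alpha}$, Chapman--Kolmogorov propagation) is indeed the right one, but whose two decisive steps do not work as you state them. First, the claim that ``the integral of $\tp(t,x,z)\varphi(z)$ over $\{|z|\le t^{1/\alpha}\}$ is at most $C|x|^{-\delta}$, while the left-hand side is $\asymp|x|^{-\delta}$, forces nontrivial mass on the complementary region'' is a non sequitur: carrying out the computation with Theorem \ref{th:heat_kernel} and Proposition \ref{lem:est_at_zero} gives an inner-region contribution of exactly the same order $|x|^{-\delta}$ as the total, with a constant $C$ that need not be smaller than the lower comparability constant of $\varphi$. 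To force mass onto an annulus you must gain genuine smallness: shrink the inner region to $\{|z|\le rt^{1/\alpha}\}$ (which produces a factor $r^{d-2\delta}$, $d-2\delta\ge\alpha>0$) and cut the tail at $\{|z|\ge Rt^{1/\alpha}\}$ (where the tail mass of $p$ is $\lesssim R^{-\alpha}$ and $\varphi(z)\lesssim 1+(Rt^{1/\alpha})^{-\delta}$), then choose $r$ small and $R$ large. This is precisely the concentration estimate \eqref{eq:last_aux_1} which the paper obtains, after normalizing by $\phi_t=1+t^{\delta/\alpha}\varphi$ and $\mu_t=\phi_t^2\,\d z$, by adapting the scaling argument of \cite[Section 4.2]{KB-TG-TJ-DP-2019}.

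Second, your conversion of the integral information into the pointwise bound $\tp(t,x,z_0)\gtrsim H(t,x)p(t,x,z_0)$ on the annulus rests on a ``Harnack-type continuity of $\tp(t,x,\cdot)$ inherited from $p$ via the perturbation series''. No such spatial comparability of $\tp$ at scale $t^{1/\alpha}$ is established in the paper, and it does not follow termwise from the series \eqref{eq:pert_series} (the time variable in the inner kernels runs down to $0$, so the comparability scale degenerates); near the diagonal it is essentially the two-sided bound you are trying to prove. The paper avoids this step entirely: no pointwise bound at time $t$ on the annulus is needed, because in the Chapman--Kolmogorov identity at time $2t$ one bounds the second factor from below by $\tp(t,z,y)\ge p(t,z,y)$ and uses the free-kernel comparability of Lemma \ref{lem:properties_pt_nu}~(e), so that only $\int_{\mathrm{annulus}}\tp(t,x,z)\,(\cdots)\,\d z$ enters. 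Finally, your last propagation step, with $\tp(t,z,y)\ge p(t,z,y)$, produces only the boost $H(t,x)$; when $|y|\le rt^{1/\alpha}$ as well, statement (a) requires the factor $H(t,y)$ too, and recovering it needs a second bootstrap (apply the already-proved case, via symmetry, to the factor $\tp(t,z,y)$), exactly as in the paper's treatment of the case $|x|\le(2t)^{1/\alpha}$, $|y|\le r(2t)^{1/\alpha}$. As written, then, the proof of (b) $\Rightarrow$ (a) has genuine gaps, although each can be repaired along the lines just indicated, which is what the paper's $q$-kernel argument does systematically.
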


As mentioned above, for $|x|,|y| \geq t^{1/\alpha}$ we always have
\[
\tp(t,x,y) \geq p(t,x,y) \geq \frac{1}{4} \left(1+\frac{t^{\delta/\alpha}}{|x|^{\delta}} \right)\left(1+\frac{t^{\delta/\alpha}}{|y|^{\delta}} \right) p(t,x,y),
\]
which means that in fact the statement (b) implies the lower bound for the kernel $\tp(t,x,y)$ for the full range of time-space variables.
In combination with our results in \cite{TJ-KK-KS-2022}, Theorem \ref{th:heat_kernel_and_ground_state} gives the matching lower bound for the ground state of the relativistic Coulomb model. Details are given in Section \ref{sec:relativistic} below. Theorem \ref{th:heat_kernel_and_ground_state} is proved is Section \ref{sec:last_result}. We remark that the sharp two-sided finite-time horizon estimates for heat kernels of fractional Laplacian and more general L\'evy operators (covering our assumptions \eqref{A1}--\eqref{A2}) with Kato class potentials can be found in Bogdan, Hansen and Jakubowski \cite{KB-WH-TJ-2008} and Grzywny, Kaleta and Sztonyk \cite{TG-KK-PS-2022}.

\section{Schr\"odinger perturbations of heat kernels} \label{sec:pert}

\subsection{General framework} \label{sec:kernel_pert}
Our approach in this paper uses the technique of perturbation of kernels which was developed in \cite{KB-WH-TJ-2008}. We start with abstract settings.

Let $(E,\mathcal{F}, \mu)$ be a $\sigma$-finite measure space. Let $k : (0,\infty) \times E\times E \to [0,\infty)$ be a $\mathcal{B}((0,\infty)) \times \mathcal{F} \times \mathcal{F}$-measurable kernel such that
\begin{align} 
&\int_E k(s,x,z)k(t,z,y) \mu(\d z) = k(t+s,x,y), &&  x,y \in E, \label{eq:Ch-K}\\
&k(t,x,y) = k(t,y,x)\ge 0, &&  t>0,\; x,y \in E,\label{eq:symmetry}
\end{align}
and let $V \colon E \to [0,\infty)$ be an $\mathcal{F}$-measurable function. We define
\begin{align} \label{eq:pert_series}
\tk(t,x,y) = \sum_{n=0}^\infty k_{n}(t,x,y),
\end{align}
where
\begin{align*}
k_{0}(t,x,y) &= k(t,x,y) \\
k_{n}(t,x,y) &= \int_0^t \int_E k(s,x,z) V(z) k_{n-1}(t-s,z,y)\mu(\d z) \d s, \qquad n\ge1.
\end{align*}
It is known that $\tk(t,x,y)$ is a symmetric transition density, i.e.\ it satisfies the equalities \eqref{eq:Ch-K}, \eqref{eq:symmetry}, see \cite{KB-WH-TJ-2008}. Moreover, the following  perturbation formula 
\begin{align*}
\tk(t,x,y) & = k(t,x,y) + \int_0^t \int_E k(s,x,z) V(z) \tk(t-s,z,y)\mu(\d z) \d s \\
           & = k(t,x,y) + \int_0^t \int_E \tk(s,x,z) V(z) k(t-s,z,y)\mu(\d z) \d s, \quad t>0, \ x,y \in E,
\end{align*}
holds. The starting point of our investigations is the following direct observation. 

\begin{lemma}\label{lem:simpleest} 
Suppose we are given two kernels $\kk{1}(t,x,y)$ and $\kk{2}(t,x,y)$ as above and denote by $\tki1(t,x,y)$ and $\tki2(t,x,y)$ the corresponding perturbed kernels as in \eqref{eq:pert_series}. If
\begin{align} \label{eq:le}
\kk{1}(t,x,y) \le \kk{2}(t,x,y), \quad t>0, \ x,y \in E.
\end{align}
then
\begin{align} \label{eq:le_pert}
\tki1(t,x,y) \le \tki2(t,x,y), \quad t>0, \ x,y \in E.
\end{align}
\end{lemma}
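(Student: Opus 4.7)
The plan is to prove the inequality $\widetilde{k}^{(1)} \le \widetilde{k}^{(2)}$ termwise in the perturbation series \eqref{eq:pert_series}. Since both kernels are defined as infinite sums of non-negative terms $k_n^{(i)}$, it suffices to establish the pointwise inequality $k_n^{(1)}(t,x,y) \le k_n^{(2)}(t,x,y)$ for every $n \ge 0$, $t > 0$, and $x, y \in E$; summation then yields \eqref{eq:le_pert}.

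First I would set up a straightforward induction on $n$. The base case $n = 0$ is exactly the hypothesis \eqref{eq:le}, since $k_0^{(i)} = k^{(i)}$. For the inductive step, assuming $k_{n-1}^{(1)} \le k_{n-1}^{(2)}$ pointwise, I would plug both inequalities into the defining recursion
\[
k_n^{(i)}(t,x,y) = \int_0^t \int_E k^{(i)}(s,x,z)\, V(z)\, k_{n-1}^{(i)}(t-s,z,y)\, \mu(\d z)\, \d s.
\]
The integrand on the right for $i=1$ is, at each point $(s,z)$, bounded above by the integrand for $i=2$. This is the step that uses crucially the non-negativity of $V$, the symmetry/non-negativity \eqref{eq:symmetry} of the base kernels $k^{(i)}$, and the inductive hypothesis. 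Monotonicity of the integral (with respect to $\mu(\d z)\,\d s$ on $[0,t] \times E$) then gives $k_n^{(1)}(t,x,y) \le k_n^{(2)}(t,x,y)$.

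Finally, summing $k_n^{(1)} \le k_n^{(2)}$ over $n \ge 0$ (both sides being series of non-negative terms, so no convergence subtleties affect the comparison) delivers \eqref{eq:le_pert}. There is essentially no obstacle here: the argument is a routine monotonicity/induction, and the only thing one must keep track of is that $V \ge 0$ and $k^{(i)} \ge 0$ so that pointwise inequalities of integrands transfer to inequalities of integrals. This observation is the reason the lemma is stated as a ``direct observation'' in the text.
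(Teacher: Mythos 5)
Your proof is correct and is exactly the argument the paper has in mind: its one-line proof ("follows from \eqref{eq:pert_series}, \eqref{eq:le} and $V\ge 0$") is precisely the termwise induction $k_n^{(1)}\le k_n^{(2)}$ followed by summation that you spell out. No differences of substance.
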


\begin{proof}
It follows from \eqref{eq:pert_series}, \eqref{eq:le} and the assumption $V \geq 0$. 
\end{proof}

This fact will be used below in the following form.

\begin{corollary}\label{cor:tildepest}
Assume that there is $\lambda>0$ such that
\begin{align}
	&\kk1(t,x,y) \le e^{\lambda t}\kk2(t,x,y) && t>0,\; x,y \in E. \label{eq:le1}
\end{align}
Then
\begin{align*}
	&\tki1(t,x,y) \le e^{\lambda t}\tki2(t,x,y) && t>0,\; x,y \in E.
\end{align*}
\end{corollary}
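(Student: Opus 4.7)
The plan is to reduce the statement directly to Lemma~\ref{lem:simpleest} by absorbing the exponential factor into an auxiliary kernel. Set $K(t,x,y) := e^{\lambda t}\kk2(t,x,y)$. First I would check that $K$ is itself a kernel of the type considered in Section~\ref{sec:kernel_pert}: symmetry \eqref{eq:symmetry} is inherited trivially from $\kk2$, and the Chapman--Kolmogorov identity \eqref{eq:Ch-K} follows from
\begin{align*}
\int_E K(s,x,z) K(t,z,y)\, \mu(\d z)
= e^{\lambda(s+t)} \int_E \kk2(s,x,z) \kk2(t,z,y)\, \mu(\d z)
= e^{\lambda(s+t)} \kk2(s+t,x,y)
= K(s+t,x,y).
\end{align*}
So the perturbation procedure \eqref{eq:pert_series} applied to $K$ (with the same potential $V \geq 0$) produces a well-defined perturbed kernel $\widetilde K$.

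Next, I would identify $\widetilde K$ explicitly as $e^{\lambda t}\,\tki2(t,x,y)$. This is a one-line induction on $n$ for the series terms: denoting the $n$-th iterate associated with $K$ by $K_n$ and with $\kk2$ by $k_n^{(2)}$, the base case $K_0(t,x,y) = e^{\lambda t} k_0^{(2)}(t,x,y)$ is the definition of $K$, and the inductive step follows because $e^{\lambda s} \cdot e^{\lambda (t-s)} = e^{\lambda t}$ factors out of the iterated integral, yielding $K_n(t,x,y) = e^{\lambda t}\, k_n^{(2)}(t,x,y)$. Summing the (non-negative) series gives $\widetilde K(t,x,y) = e^{\lambda t}\, \tki2(t,x,y)$.

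Finally, the hypothesis \eqref{eq:le1} is precisely the pointwise inequality $\kk1(t,x,y) \leq K(t,x,y)$, so Lemma~\ref{lem:simpleest} applied to the pair $(\kk1, K)$ delivers
\[
\tki1(t,x,y) \leq \widetilde K(t,x,y) = e^{\lambda t}\,\tki2(t,x,y), \qquad t>0,\ x,y \in E,
\]
which is the desired conclusion. There is no genuine obstacle in this argument; the only substantive point is the compatibility of $K$ with the Chapman--Kolmogorov equation, which accounts for the specific choice of the exponential factor $e^{\lambda t}$ (a polynomial factor, for instance, would not close up under the convolution). One could equivalently bypass the auxiliary kernel and argue by a direct induction showing $k_n^{(1)}(t,x,y) \leq e^{\lambda t}\, k_n^{(2)}(t,x,y)$ for every $n$, and then sum; the reduction to Lemma~\ref{lem:simpleest} is merely a cleaner packaging of the same computation.
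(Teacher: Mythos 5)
Your proposal is correct and follows essentially the same route as the paper: both define the auxiliary kernel $e^{\lambda t}\kk2(t,x,y)$, verify via induction on the series terms that its perturbation equals $e^{\lambda t}\tki2(t,x,y)$, and then invoke Lemma~\ref{lem:simpleest}. Your additional check that the rescaled kernel still satisfies the Chapman--Kolmogorov identity is a sensible (if routine) point that the paper leaves implicit.
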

\begin{proof}
We apply Lemma \ref{lem:simpleest} to the kernel $k(t,x,y):= e^{\lambda t} \kk2(t,x,y)$ instead of $\kk2(t,x,y)$. We only have to make sure that $\tk(t,x,y) = e^{\lambda t} \tki2(t,x,y)$. To this end, we define $k_{0}(t,x,y) = k(t,x,y)$,
\begin{align*}
k_{n}(t,x,y) &= \int_0^t \int_E k(s,x,z) q(z) k_{n-1}(t-s,x,y)\mu(\d z) \d s, \qquad n\ge1,
\end{align*}
and observe that $k_{n}(t,x,y) = e^{\lambda t}\kk2_{n}(t,x,y)$, which yields the desired assertion.
\end{proof}

\subsection{Heat kernels of free operators} \label{sec:free_operators}

Let $d\in \N:=\{1,2,\ldots\}$, $\alpha\in (0,2)$ and $\alpha<d$. Let 
$$
\nua(y)
=c_{d,\alpha}|y|^{-d-\alpha},\quad y \in \Rd \setminus \left\{0\right\}, 
$$
where
$$
c_{d,\alpha}:=\frac{ \alpha 2^{\alpha-1}\Gamma\big((d+\alpha)/2\big)}{\pi^{d/2}\Gamma(1-\alpha/2)}.
$$
This coefficient is so chosen that 
\begin{equation}
  \label{eq:trf}
  \int_{\Rd} (1-\cos(\xi\cdot y))\nua(y)\d y=|\xi|^\alpha, \quad \xi\in \Rd.
\end{equation}
Let $\nu(dx) = \nu(x)dx$ be a symmetric L\'evy measure such that \eqref{A1} holds. Consider
$$
\psi(\xi) = \int_{\R^d \setminus\{0\}} (1-\cos(\xi \cdot y) \nu(dy), \quad \xi \in \R^d.
$$
Observe that we have
$$
|\xi|^{\alpha} - |\sigma| \leq \psi(\xi) \leq |\xi|^{\alpha}, \quad \xi \in \R^d,
$$
which gives 
\begin{align} \label{eq:LK_comp}
\psi(\xi) \asymp |\xi|^{\alpha}, \quad |\xi| \geq (2|\sigma|)^{1/\alpha}. 
\end{align}
In particular, $e^{-t \psi(\cdot)} \in L^1(\R^d)$, for every $t>0$.
We define
\begin{equation}
  \label{eq:free_density_stable}
  \pa_t(x):=(2\pi)^{-d}\int_{\R^d} e^{-t|\xi|^\alpha}e^{-ix\cdot\xi}\d \xi, \quad t>0,\ x \in \R^d,
\end{equation}
and
\begin{equation}
  \label{eq:free_density}
  p_t(x):=(2\pi)^{-d}\int_{\R^d} e^{-t\psi(\xi)}e^{-ix\cdot\xi}\d \xi, \quad t>0,\ x \in \R^d.
\end{equation}
For every $t>0$, $\pa_t$ and $p_t$ are continuous and bounded probability density functions, see e.g. \cite{VK-RS-2013}. Moreover, $\pa_t$ is radial and $p_t$ is symmetric, i.e.\ $p_t(-y)=p_t(y)$, $y \in \R^d$. 
From \eqref{eq:free_density_stable} we have
\begin{equation}
  \label{eq:sca}
  \pa_t(x)=t^{-d/\alpha}\pa_1(t^{-{1/\alpha}}x)\,.
\end{equation}
It is also well-known that 
\begin{equation}\label{eq:oppt}
\pa_t(x) \asymp t^{-d/\alpha}\land \frac{t}{|x|^{d+\alpha}}
\,,\quad t>0,\,x\in \Rd\,.
\end{equation}
We denote 
$$
\pa(t,x,y)=\pa_t(y-x), \quad p(t,x,y)=p_t(y-x), \quad t>0,\ x,y\in \Rd.
$$
Clearly, $\pa_t$ and $p_t$ are symmetric kernels satisfying the Chapman-Kolmogorov equations:
\begin{equation}\label{eq:ChKforpa}
     \int_{\Rd} \pa(s,x, y)\pa(t,y, z)\d y = \pa(t+s,x, z), \quad x, z \in  \Rd,\, s, t > 0,
\end{equation}    
\begin{equation}\label{eq:ChKforp}
     \int_{\Rd} p(s,x, y)p(t,y, z)\d y = p(t+s,x, z), \quad x, z \in  \Rd,\, s, t > 0.
\end{equation}  
We let
$$
P^{(\alpha)}_t f(x) = \int_{\R^d} f(y) \pa(t,x,y) dy, \quad f \in L^2(\R^d,dx), \ t>0,
$$ 
$$
P_t f(x) = \int_{\R^d} f(y) p(t,x,y) dy, \quad f \in L^2(\R^d,dx), \ t>0;
$$
$(P_t^{(\alpha)})_{t > 0}$, $(P_t)_{t > 0}$ are strongly continuous semigroups of self-adjoint contractions on $L^2(\R^d,dx)$, and 
the operators $L^{(\alpha)}$ and $L$ are $L^2$-generators of these semigroups, see e.g.\ \cite[vol.\ 1, Example 4.7.28, pp.\ 407--409]{Jacob}. The kernels $\pa(t,x,y)$, $p(t,x,y)$ are heat kernels of the operators $L^{(\alpha)}$, $L$, respectively. 

Since $\nu^{(\alpha)} = \nu + \sigma$, we have
\begin{align*}
p^{(\alpha)}_t(x) = p_t * \mu^\sigma_t(x) = \int_{\R^d} p_t(x-y) \mu^\sigma_t(\d{y}), \quad x \in \R^d, \ t>0,
\end{align*}
where
\begin{align*}
\mu^\sigma_t(\d{x}) = e^{-|\sigma|t} \sum_{k=0}^\infty \frac{t^k \sigma^{k*}(\d{x})}{k!}
 = e^{-|\sigma|t} \delta_0(\d{x}) + e^{-|\sigma|t} \sum_{k=1}^\infty \frac{t^k \sigma^{k*}(x)\d{x}}{k!}.
\end{align*}
Hence,
\begin{align} \label{eq:relation_kernels}
p^{(\alpha)}_t(x) = e^{-|\sigma|t} p_t(x) + e^{-|\sigma|t} \sum_{k=1}^\infty \frac{t^k (p_t*\sigma^{k*})(x)}{k!}
\end{align}
and, consequently, 
\begin{align}\label{eq:domination_free_kernels}
p(t,x,y) \le e^{|\sigma|t} p^{(\alpha)}(t,x,y), \quad x, y \in \R^d, \ t>0.
\end{align}

We close this section by collecting some examples of L\'evy measures $\nu$ satisfying \eqref{A1} and \eqref{A2}.

\begin{example} \label{ex:example_nu}
\begin{enumerate}
\item[(a)] \textit{Relativistic stable L\'evy density}: let $\alpha \in (0,2)$ and $m>0$ and let 
\begin{align*}
\nu(x) & = \frac{ \alpha(4\pi)^{d/2}}{2\Gamma(1-\alpha/2)} \int_0^{\infty} \exp\left(-\frac{|x|^2}{4u} - m^{2/\alpha} u\right) u^{-1-\frac{d+\alpha}{2}} \d u \\
       & = \frac{\alpha 2^{\frac{\alpha-d}{2}} m^{\frac{d+\alpha}{2\alpha}}}{\pi^{\frac{d}{2}}\Gamma(1-\alpha/2)}
            \frac{K_{\frac{d+\alpha}{2}}\big(m^{\frac{1}{\alpha}}|x|\big)}{|x|^{\frac{d+\alpha}{2}}} , \quad x \in \R^d \setminus \left\{0\right\},
\end{align*} 
where 
\begin{align*}
K_{\mu}(r) = \frac{1}{2} \left(\frac{r}{2}\right)^{\mu} \int_0^{\infty} u^{-\mu-1} \exp\left(-u-\frac{r^2}{4u}\right) \d u, \quad \mu>0, \ r>0,
\end{align*} 
is the modified Bessel function of the second kind, see e.g.\ \cite[10.32.10]{NIST}.
As proved in \cite[Lemma 2]{MR-2002}, 
\[
\sigma(x) := \nu(x) - \nua(x)
\]
is a positive density of a finite measure such that
\[
|\sigma|=\int_{\R^d} \sigma(x) \d x = m \quad \text{and} \quad \sigma(x) \leq \frac{c}{|x|^{d+\alpha-2}}, \quad x \in \R^d \setminus \left\{0\right\},
\]
for some constant $c>0$. It is then clear that \eqref{A1} holds. In order to verify \eqref{A2}, we first observe that the density $\nu(x)$ is radial decreasing function. By using the well-known asymptotics,
\[
\lim_{r \to \infty} K_{\mu}(r) \sqrt{r} e^r  = \sqrt{\pi/2},
\]
we can also show that
\[
\nu(x) \asymp e^{-m^{1/\alpha}|x|} |x|^{-\frac{d+\alpha+1}{2}}, \quad |x| \geq 1.
\]
Consequently, \eqref{A2} holds by combination of \cite[Proposition 2]{KK-PS-2017} and \cite[Lemma 3.1]{KK-RS-2020}. 

The L\'evy operator associated to $\nu$ is the \emph{relativistic ($\alpha$-stable) operator}
\[
L=-(-\Delta+m^{2/\alpha})^{\alpha/2}+ m. 
\]

\smallskip

\item[(b)] \textit{Tempered stable L\'evy density:} let $\beta > \alpha$ and $\lambda>0$, and let
\begin{align*}
\nu(x) = e^{-\lambda |x|^{\beta}} \nua(x) , \quad x \in \R^d \setminus \left\{0\right\}.
\end{align*}
Then
\begin{align*}
\sigma(x) = \nua(x)- \nu(x) = c_{d,\alpha}|x|^{-d-\alpha} \left(1 - e^{-\lambda|x|^{\beta}}\right),
\end{align*} 
where
\[
c_{d,\alpha}=\frac{ \alpha 2^{\alpha-1}\Gamma\big((d+\alpha)/2\big)}{\pi^{d/2}\Gamma(1-\alpha/2)},
\]
is a positive density of a finite measure. By direct calculations,
\begin{align*}
|\sigma|= \int_{\R^d} \sigma(x) \d x & = c_{d,\alpha} \frac{ 2  \pi^{d/2}}{\Gamma\big(\frac{d}{2}\big)}\int_0^{\infty} (1-e^{-\lambda u^{\beta}}) u^{-1-\alpha} \d u \\
                         & = \frac{ \alpha 2^{\alpha}\Gamma\big(\frac{d+\alpha}{2}\big)}{\beta \Gamma\big(\frac{d}{2}\big) \Gamma\big(1-\frac{\alpha}{2}\big)}\int_0^{\infty} (1-e^{-\lambda u}) u^{-1-\frac{\alpha}{\beta}} \d u
												  =\frac{ 2^{\alpha}\Gamma\big(\frac{d+\alpha}{2}\big)\Gamma\big(1-\frac{\alpha}{\beta}\big)}{\Gamma\big(\frac{d}{2}\big) \Gamma\big(1-\frac{\alpha}{2}\big)} \lambda^{\frac{\alpha}{\beta}},
\end{align*}
where in the last line we have used the standard identity
\[
\frac{\gamma}{\Gamma(1-\gamma)} \int_0^{\infty} (1-e^{-\lambda u}) u^{-1-\gamma} \d u = \lambda^{\gamma}, \quad \lambda>0, \ \gamma \in (0,1),
\]
see e.g.\ \cite[(1)]{RS-RS-ZV-2012}. 
In particular, \eqref{A1} holds. Moreover, it follows from \cite[Proposition 2]{KK-PS-2017} that \eqref{A2} is satisfied if and only if $\alpha \in (0,1)$ and $\beta \in (\alpha,1]$. 

\smallskip

\item[(c)] \textit{Stable L\'evy density suppressed on a complement of a neighbourhood of the origin:} let $\eta:\R^d \to (0,1]$ be a function such that there exists a decreasing profile $g:(0,\infty) \to (0,\infty)$ such that $\eta(x) \asymp g(|x|)$, $x \in \R^d \setminus \left\{0\right\}$. Let $r>0$ and let 
\[
\nu(x) = \nua(x) \I_{\left\{|x| \leq r \right\}} + \eta(x) \nua(x) \I_{\left\{|x| > r \right\}}, \quad x \in \R^d \setminus \left\{0\right\}.
\]
Then
\[
\sigma(x) = \nua(x)- \nu(x) = \eta(x) \nua(x) \I_{\left\{|x| > r \right\}} \geq 0
\]
is a density of a finite measure. Indeed, 
\[
|\sigma|= \int_{\R^d} \sigma(x) \d x = \int_{|x| > r} (1-\eta(x)) \nua(x) \d x \leq \int_{|x| > r} \nua(x) \d x < \infty.
\]
Hence, \eqref{A1} holds. If we moreover assume that there exists a constant $c>0$ such that 
\[
g(s)g(t) \leq c g(s+t), \quad s, t \geq 1,
\]
then \eqref{A2} holds with the profile $f(r)= g(r)r^{-d-\alpha}$. 

An example of such $\nu$ is the \emph{layered stable L\'evy density} of the form 
\[ \nu(x) = c_{d,\alpha} |x|^{-d-\alpha}(1 \wedge |x|^{-\gamma}), \quad x \in \R^d \setminus \left\{0\right\}, \]
where $\gamma>0$.
\end{enumerate}

\end{example}

\subsection{Kernels perturbed by fractional Hardy potential}  \label{sec:pert_kernel}
Let $d \in \N$ and $0 < \alpha < 2 \wedge d$. By following \cite{KB-TG-TJ-DP-2019}, we consider the function $[0,(d-\alpha)/2] \ni \beta \longmapsto \kappa_{\beta}$ given by $\kappa_0=0$ and $g$ satisfies 
$$
\kappa_{\beta}
:= \frac{2^\alpha \Gamma\left(\frac{\alpha+\beta}{2}\right) \Gamma\left(\frac{d-\beta}{2}\right)}{\Gamma\left(\frac{\beta}{2}\right)\Gamma\left(\frac{d-\alpha-\beta}{2}\right)}, \quad 0< \beta \leq \frac{d-\alpha}{2}. 
$$
It is a strictly increasing and continuous function with maximal (critical) value
$$
\kappa^*=\kappa_{(d-\alpha)/2} = \frac{2^\alpha \Gamma\left(\frac{d+\alpha}{4}\right)^2}{\Gamma\left(\frac{d-\alpha}{4}\right)^2}. 
$$
It is then clear that it establishes a one-to-one correspondence between numbers in $[0,(d-\alpha)/2]$ and $[0,\kappa^*]$. 
\textbf{Throughout the paper we assume that}

\begin{align} \label{eq:kappa_and_delta}
\textbf{ $\kappa \in [0,\kappa^*]$ is fixed and $\delta \in [0,(d-\alpha)/2]$ is such that $\kappa_{\delta} = \kappa$}\,. 
\end{align}

\medskip

Let
$$
V(x) = V_{\kappa}(x) = \frac{\kappa}{|x|^{\alpha}}.
$$
When $\kappa < \kappa^*$, then $V_{\kappa}$ is said to be \emph{sub-critical}.

Starting with $k(t,x,y)=p(t,x,y)$ we can now construct the perturbed kernel $\tp(t,x,y)$ according to the procedure described in Section \ref{sec:kernel_pert}. Recall that $\tp(t,x,y)$ is a symmetric transition density such that 
\begin{align} \label{eq:perturbation}
\tp(t,x,y) & = p(t,x,y) + \int_0^t \int_{\R^d} p(s,x,z) V(z) \tp(t-s,z,y)\d z \d s \\
           & = p(t,x,y) + \int_0^t \int_{\R^d} \tp(s,x,z) V(z) p(t-s,z,y)\d z \d s, \quad t>0, \ x,y \in \R^d. \nonumber
\end{align}
Such a construction has been recently performed in \cite{KB-TG-TJ-DP-2019} for the case when $k(t,x,y)=\pa(t,x,y)$. It was proved in \cite[Theorems 1.1 and 3.1]{KB-TG-TJ-DP-2019} that
\begin{align}\label{eq:tilde_pa_est}
\tpa(t,x,y) \asymp \left(1+\frac{t^{\delta/\alpha}}{|x|^{\delta}}\right)\left(1+\frac{t^{\delta/\alpha}}{|y|^{\delta}}\right) \left(t^{-d/\alpha}\land \frac{t}{|y-x|^{d+\alpha}}\right), \quad x, y \in \R^d \setminus \left\{0\right\}, \ t>0,
\end{align}
and
\begin{align}\label{eq:tilde_pa_invariant}
\int_{\R^d} \tpa(t,x,y) |y|^{-\delta} dy = |x|^{-\delta},  \quad x \in \R^d \setminus \left\{0\right\}, \ t>0.
\end{align}
By \eqref{eq:domination_free_kernels} and Corollary \ref{cor:tildepest} applied to $k^{(1)}(t,x,y) = p(t,x,y)$, $k^{(2)}(t,x,y) = p^{(\alpha)}(t,x,y)$ and $\lambda = |\sigma|$, we get
\begin{align} \label{eq:est_by_stable_hardy}
\tp(t,x,y) \le e^{|\sigma| t} \tpa(t,x,y), \qquad x,y \in \R^d, \ t>0.
\end{align}
This domination property will be crucial for our further investigations in this paper. By using \eqref{eq:est_by_stable_hardy}, \eqref{eq:tilde_pa_est} and by following the argument in the proof of \cite[Lemmas 4.9-4.10]{KB-TG-TJ-DP-2019} (for any fixed $t>0$), we can show directly that the function 
\begin{align} \label{eq:cont_kernel}
(0,\infty) \times \left(\R^d \setminus \left\{0 \right\}\right)^2 \ni (t,x,y) \longmapsto \tp(t,x,y) \ \ \text{is continuous.}
\end{align} 

\subsection{Semigroup of operators defined by the kernel $\tp$} 
We define
$$
\tPa_t f(x) = \int_{\R^d} f(y) \tpa(t,x,y) dy, \quad f \in L^2(\R^d,dx), \ t>0,
$$ 
$$
\tP_t f(x) = \int_{\R^d} f(y) \tp(t,x,y) dy, \quad f \in L^2(\R^d,dx), \ t>0.
$$ 
It is shown in \cite[Proposition 2.4]{KB-TG-TJ-DP-2019} that $(\tPa_t)_{t > 0}$ is a strongly continuous semigroup of contractions on $L^2(\R^d,dx)$. By the same argument and the domination property \eqref{eq:est_by_stable_hardy} this extends to $(\tP_t)_{t > 0}$ except the $L^2$-contractivity. Indeed, we only know that $\big\|\tP_t\big\|_{L^2 \to L^2} \leq e^{|\sigma| t}$, $t >0$. Clearly, each $\tPa_t$ and $\tP_t$ is a self-adjoint operator. 

We will also need the following smoothing properties of the semigroup $(\tP_t)_{t > 0}$.

\begin{lemma} \label{eq:smooth}
Under \eqref{A1}, for every $t>0$, we have
\[
\tP_t \big(L^2(\R^d)\big) \subset C\big(\R^d \setminus \left\{0\right\}\big).
\]
Furthermore, there exists a constant $c>0$ such that, for every $x \in \R^d$, $t>0$ and $f \in L^2(\R^d)$,
\[
|\tP_t f(x)| \leq c e^{|\sigma| t} t^{-\frac{d-\delta}{\alpha}} \big(1+ t^{\frac{d}{2\alpha}}\big) \left(1+\frac{t^{\delta/\alpha}}{|x|^{\delta}}\right) \left\|f\right\|_2,
\]
where $\delta$ is defined by \eqref{eq:kappa_and_delta}. The constant $c$ does not depend on $\sigma$.
\end{lemma}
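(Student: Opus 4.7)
The plan is to establish the pointwise $L^2$-bound first, by Cauchy--Schwarz combined with the semigroup identity for $\tp$, and then to deduce the inclusion $\tP_t(L^2) \subset C(\R^d \setminus \{0\})$ via dominated convergence together with the joint continuity recorded in \eqref{eq:cont_kernel}. For the pointwise bound, Cauchy--Schwarz yields
\[
|\tP_t f(x)|^2 \leq \|f\|_2^2 \int_{\R^d} \tp(t,x,y)^2\,dy,
\]
and since $\tp$ is a symmetric transition density satisfying Chapman--Kolmogorov, see \eqref{eq:Ch-K}--\eqref{eq:symmetry}, the right-hand integral equals $\tp(2t,x,x)$. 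Applying the domination \eqref{eq:est_by_stable_hardy} and the sharp estimate \eqref{eq:tilde_pa_est} at $y = x$ (where the spatial minimum equals $t^{-d/\alpha}$) gives
\[
\tp(2t,x,x) \leq e^{2|\sigma|t}\,\tpa(2t,x,x) \leq c\, e^{2|\sigma|t} \left(1 + \frac{t^{\delta/\alpha}}{|x|^\delta}\right)^{2} t^{-d/\alpha},
\]
with $c$ independent of $\sigma$. Taking square roots produces
\[
|\tP_t f(x)| \leq c\, e^{|\sigma|t}\, t^{-d/(2\alpha)}\left(1 + \frac{t^{\delta/\alpha}}{|x|^\delta}\right)\|f\|_2,
\]
and an elementary two-regime check of the inequality $t^{-d/(2\alpha)} \leq t^{-(d-\delta)/\alpha}(1 + t^{d/(2\alpha)})$ (using $\delta \leq d/2$ for $t \leq 1$ and $\delta \geq 0$ for $t \geq 1$) converts this into the precise form stated in the lemma.

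For the continuity part, I fix $x_0 \in \R^d \setminus \{0\}$, $t > 0$, and a relatively compact neighbourhood $U$ of $x_0$ with $r := \dist(U,\{0\}) > 0$ and $R := 2\sup_{x \in U}|x|$. For $x \in U$ and $|y| \geq R$ the triangle inequality gives $|y - x| \geq |y|/2$; combining this with \eqref{eq:est_by_stable_hardy} and \eqref{eq:tilde_pa_est}, I build an $x$-independent majorant
\[
G(y) := c\left(1 + \frac{t^{\delta/\alpha}}{|y|^\delta}\right)\left(t^{-d/\alpha}\I_{\{|y|\leq R\}} + \frac{t}{|y|^{d+\alpha}}\I_{\{|y|>R\}}\right)
\]
with $\tp(t,x,y) \leq G(y)$ for every $x \in U$, where $c$ absorbs the factors $e^{|\sigma|t}$ and $(1 + t^{\delta/\alpha} r^{-\delta})$ coming from the prefactor in \eqref{eq:tilde_pa_est}. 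Since $2\delta < d$, the weight $|y|^{-\delta}$ is square-integrable near the origin and the outer factor decays fast enough at infinity, so $G \in L^2(\R^d)$; Cauchy--Schwarz then gives $|f|\,G \in L^1(\R^d)$ for $f \in L^2(\R^d)$. The joint continuity of $\tp$ from \eqref{eq:cont_kernel} together with the dominated convergence theorem then delivers continuity of $\tP_t f$ at $x_0$.

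The only delicate step I anticipate is the construction of the uniform majorant $G$: both the prefactor $(1 + t^{\delta/\alpha}/|x|^\delta)$ and the spatial factor $t^{-d/\alpha} \wedge t/|y-x|^{d+\alpha}$ in \eqref{eq:tilde_pa_est} depend on $x$, so one must use the lower bound $r > 0$ on $|x|$ to absorb the first into the constant $c$, and the triangle inequality to pass from $|y-x|$ to $|y|$ in the second, without destroying the $L^2$-integrability needed to invoke dominated convergence.
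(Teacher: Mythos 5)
Your proposal is correct, and for the quantitative bound it takes a genuinely different route from the paper. The paper estimates $|\tP_t f(x)|$ directly by splitting the integral into $\{|y|\le 1\}$ and $\{|y|>1\}$, using \eqref{eq:est_by_stable_hardy}, \eqref{eq:tilde_pa_est} and \eqref{eq:oppt} on each piece together with Cauchy--Schwarz, which produces exactly the stated factor $t^{-(d-\delta)/\alpha}(1+t^{d/(2\alpha)})$. You instead apply Cauchy--Schwarz once and use the symmetry and Chapman--Kolmogorov property of $\tp$ (which the paper guarantees in Section \ref{sec:kernel_pert}) to reduce everything to the on-diagonal value $\tp(2t,x,x)$, then invoke \eqref{eq:est_by_stable_hardy} and \eqref{eq:tilde_pa_est}; this yields the cleaner and in fact stronger bound $c\,e^{|\sigma|t}t^{-d/(2\alpha)}\bigl(1+t^{\delta/\alpha}|x|^{-\delta}\bigr)\|f\|_2$, and your two-regime check that this implies the stated form is valid since $0\le\delta\le(d-\alpha)/2<d/2$. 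For the continuity part both arguments rest on the same ingredients (\eqref{eq:cont_kernel}, \eqref{eq:est_by_stable_hardy}, \eqref{eq:tilde_pa_est}), but you streamline the paper's two-region split (dominated convergence on $\{|y|\le 2R\}$ plus a tail estimate sent to zero as $R\to\infty$) into a single $x$-independent $L^2$-majorant $G$ on a neighbourhood $U$ of $x_0$, which is legitimate: the prefactor in $x$ is controlled by $\dist(U,\{0\})>0$, the triangle inequality gives $|y-x|\ge|y|/2$ for $|y|\ge R$, and $G\in L^2$ because $2\delta<d$, so one application of dominated convergence suffices. The trade-off is that your argument leans on the Chapman--Kolmogorov identity for the perturbed kernel as a black box, while the paper's computation is more hands-on but delivers only the (weaker) stated exponent.
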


\begin{proof}
We first establish the continuity assertion. Fix $f \in L^2(\R^d)$ and $t>0$. Let $x, z \in \R^d \setminus \left\{0\right\}$ be such that $|x-z|< 1 \wedge |x|/2$ and let $R > |x| +1$. We have
\[
|\tP_t f(x) - \tP_t f(z)| \leq \int_{|y|\leq 2R} |f(y)| |\tp(t,x,y) - \tp(t,z,y)| \d{y} + \int_{|y| > 2R}|f(y)| (\tp(t,x,y) + \tp(t,z,y)) \d{y}.
\]
By \eqref{eq:est_by_stable_hardy} and \eqref{eq:tilde_pa_est}, we get
\begin{align} \label{eq:cont_aux}
|\tp(t,x,y) - \tp(t,z,y)| & \leq \tp(t,x,y) + \tp(t,z,y) \\
& \leq c_1 e^{|\sigma| t} \left(1+\frac{2t^{\delta/\alpha}}{|x|^{\delta}}\right) \left(\frac{2}{t^{d/\alpha}} \land \left(\frac{t}{|y-z|^{d+\alpha}}+\frac{t}{|y-x|^{d+\alpha}}\right)\right)\left(1+\frac{t^{\delta/\alpha}}{|y|^{\delta}}\right). \nonumber
\end{align}
Recall that $0 \leq \delta \leq(d-\alpha)/2$. Together with the Cauchy--Schwarz inequality this implies that $\int_{|y|\leq 2R} |f(y)| \left(1+\frac{t^{\delta/\alpha}}{|y|^{\delta}}\right)  \d{y} < \infty$. Consequently, the first integral on the right hand side above goes to zero as $z \to x$, by \eqref{eq:cont_kernel} and the Lebesgue dominated convergence theorem. 

On the other hand, by \eqref{eq:cont_aux} and the Cauchy--Schwarz inequality,  
\begin{align*}
\int_{|y| > 2R}|f(y)| & (\tp(t,x,y) + \tp(t,z,y)) \d{y} \\ 
                      & \leq c_1 e^{|\sigma| t} \left(1+\frac{2t^{\delta/\alpha}}{|x|^{\delta}}\right) \left(1+t^{\delta/\alpha}\right) \int_{|y| > 2R} |f(y)| \left(\frac{t}{|y-z|^{d+\alpha}}+\frac{t}{|y-x|^{d+\alpha}}\right)\d{y}\\
										  & \leq 2 c_1 t e^{|\sigma| t} \left(1+\frac{2t^{\delta/\alpha}}{|x|^{\delta}}\right) \left(1+t^{\delta/\alpha}\right)  \left\|f\right\|_2 \left(\int_{|y| > R}\frac{1}{|y|^{2d+2\alpha}}\d{y}\right)^{1/2}\\
											& \leq c_2 t e^{|\sigma| t} \left(1+\frac{2t^{\delta/\alpha}}{|x|^{\delta}}\right) \left(1+t^{\delta/\alpha}\right)  \left\|f\right\|_2 R^{-d/2 -\alpha}.
\end{align*}
Hence, 
\begin{align*}
\limsup_{z \to x} |\tP_t f(x) - \tP_t f(z)| & \leq c_2 t e^{|\sigma| t} \left(1+\frac{2t^{\delta/\alpha}}{|x|^{\delta}}\right) \left(1+t^{\delta/\alpha}\right)  \left\|f\right\|_2 R^{-d/2 -\alpha}
\end{align*}
and, by letting $R \to \infty$, we get that $\tP_t f(z) \to \tP_t f(x)$ as $z \to x$. Since $x \neq 0$ was arbitrary, this shows that $\tP_t f \in C\big(\R^d \setminus \left\{0\right\}\big)$. 

In order to get the second assertion, we proceed in a similar way. Let $f \in L^2(\R^d)$. By \eqref{eq:est_by_stable_hardy}, \eqref{eq:tilde_pa_est} and \eqref{eq:oppt}, we can write
\begin{align*}
|\tP_t f(x)| & \leq c_3 e^{|\sigma| t} \left(1+\frac{t^{\delta/\alpha}}{|x|^{\delta}}\right) \left(\int_{|y| \leq 1} + \int_{|y| > 1}\right) |f(y)| \pa(t,x,y) \left(1+\frac{t^{\delta/\alpha}}{|y|^{\delta}}\right)\d{y} \\
             & \leq c_3 e^{|\sigma| t} \left(1+\frac{t^{\delta/\alpha}}{|x|^{\delta}}\right) \left((1+t^{\delta/\alpha})\int_{\R^d} |f(y)| \pa(t,x,y) \d{y} + t^{-(d-\delta)/\alpha}\int_{|y| \leq 1}\frac{|f(y)|}{|y|^{\delta}}\d{y}\right) 
\end{align*}
and, by the Cauchy--Schwarz inequality applied to both integrals and \eqref{eq:oppt}, we get 
\begin{align*}
|\tP_t f(x)| & \leq c_3 e^{|\sigma| t} \left(1+\frac{t^{\delta/\alpha}}{|x|^{\delta}}\right) \left((1+t^{\delta/\alpha})\left(\int_{\R^d} |f(y)|^2 \pa(t,x,y) \d{y}\right)^{1/2} + c_4 t^{-(d-\delta)/\alpha} \left\|f\right\|_2 \right)  \\
             & \leq c_5 e^{|\sigma| t} \left(1+\frac{t^{\delta/\alpha}}{|x|^{\delta}}\right) \left((1+t^{\delta/\alpha}) t^{-d/(2\alpha)} + t^{-(d-\delta)/\alpha} \right) \left\|f\right\|_2 \\
						 & \leq c_6 e^{|\sigma| t} \left(1+\frac{t^{\delta/\alpha}}{|x|^{\delta}}\right)  t^{-(d-\delta)/\alpha}(1+t^{d/(2\alpha)}) \left\|f\right\|_2.
\end{align*}
This completes the proof.
\end{proof}
Observe that the above lemma also applies directly to the semigroup $(\tPa_t)_{t > 0}$ as a special case (we just take $\sigma \equiv 0$ in \eqref{A1}).

\section{Schr\"odinger operators with singular potentials and their semigroups} \label{sec:q_forms}

\subsection{Quadratic forms of free operators}
We first discuss the relation between quadratic forms of the operators $L^{(\alpha)}$ and $L$. Due to \eqref{eq:relation_kernels} our analysis will be based on the corresponding operator semigroups.
For $f \in L^2(\R^d)$, we define
\begin{align*}
\E_t[f] = \frac{1}{t}\spr{f-P_tf,f}, \qquad \E[f] = \lim_{t \searrow 0}\E_t[f].
\end{align*} 
Since the operators $P_t$ are self-adjoint contractions on $L^2(\R^d)$, it follows from spectral theorem that the map
\[
(0,\infty) \ni t \mapsto \E_t[f]
\] 
is decreasing for any $f \in L^2(\R^d)$, see e.g.\ \cite[Lemma 1.3.4]{Fuk}. In particular, the above limit exists and belongs to $[0,\infty]$. 
The domain of $\E$ is defined as
\begin{align*}
\D(\E) = \{f \in L^2 \colon \E[f] <\infty\}.
\end{align*} 
For $f,g \in L^2(\R^d)$ we also define
\begin{align*}
\E_t[f,g] = \frac{1}{t}\spr{f-P_tf,g}, \quad t>0.
\end{align*} 
It is easy to check that
\[
\E_t[f,g] = \frac{1}{2}\left(\E_t[f+g]-\E_t[f]-\E_t[g]\right),
\]
which shows that the limit 
\begin{align*}
\E[f,g] = \lim_{t \searrow 0}\E_t[f,g]
\end{align*} 
exists and is finite for any $f,g \in \D(\E)$; $\E[f,g]$ defines a bilinear form corresponding to initial quadratic form. Consequently, the following polarization formula
\[
\E[f,g] = \frac{1}{2}\left(\E[f+g]-\E[f]-\E[g]\right),
\]
holds for $f,g \in \D(\E)$.

The form $\big(\Ea,\D(\Ea)\big)$ of the fractional Laplacian $L^{(\alpha)}$ is defined in the same manner by replacing the operators $P_t$ with $\Pa_t$. Due to \eqref{A1}, it is a special case of $\big(\E,\D(\E)\big)$.  

Below we need to consider the convolution operators
\[
\sigma^{k*} f(x) = \int_{\R^d} \sigma^{k*}(x-y) f(y) dy, \quad k \in \N, \ f \in L^2(\R^d),
\]
where $\sigma$ comes from \eqref{A1}. Since $\sigma^{k*} \in L^1(\R^d)$, $k \in \N$, it defines a bounded operator on $L^2(\R^d)$. Indeed, by the Cauchy--Schwarz inequality and the Tonelli theorem, for $f \in L^2(\R^d)$, we have
\begin{align} \label{eq:sigma_norm}
\left\|\sigma^{k*} f\right\|_2^2 & \leq \int_{\R^d} \left(\int_{\R^d}\sigma^{k*}(x-y)dy\right)  \left(\int_{\R^d} \sigma^{k*}(x-y) |f(y)|^2 dy\right) dx \nonumber \\ & = |\sigma|^k  \int_{\R^d} \left(\int_{\R^d}\sigma^{k*}(x-y)dx\right) |f(y)|^2 dy = |\sigma|^{2k} \left\|f\right\|_2^2. 
\end{align}
\begin{lemma}\label{lem:domainsEaE}
Under assumption \eqref{A1}, we have
\begin{align*}
\E[f] + |\sigma|\|f\|_2^2 - \spr{\sigma f, f} = \Ea[f], \quad f \in L^2(\R^d).
\end{align*}
In particular, $\D(\E) = \D(\Ea)$,
\begin{align*}
\D(\E) = \left\{f \in L^2(\R^d): \int_{\R^d}\int_{\R^d} \big(f(x) - f(y)\big)^2 \nu(x-y) \d{x}\d{y} < \infty \right\}
\end{align*}
and 
\begin{align*}
\E[f] = \frac{1}{2} \int_{\R^d}\int_{\R^d} \big(f(x) - f(y)\big)^2 \nu(x-y) \d{x}\d{y}, \quad f \in L^2(\R^d).
\end{align*}
\end{lemma}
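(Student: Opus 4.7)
The plan is to reduce both claims to the corresponding well-known facts for the fractional Laplacian via the semigroup identity \eqref{eq:relation_kernels}. Let $Sf := \sigma \ast f$ denote convolution with $\sigma$; by \eqref{eq:sigma_norm} it is a bounded self-adjoint operator on $L^2(\R^d)$ with $\|S\|_{L^2 \to L^2} \leq |\sigma|$, and it commutes with $P_t$ since both are Fourier multipliers. Rewriting \eqref{eq:relation_kernels} at the operator level gives
\begin{align*}
\Pa_t \;=\; e^{-|\sigma|t}\, P_t\, e^{tS} \;=\; P_t\, e^{tB}, \qquad B := S - |\sigma| I,
\end{align*}
where the exponentials are defined by their operator-norm convergent power series (justified by $\|S^k\|_{L^2 \to L^2} \leq |\sigma|^k$).

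First I would subtract the two approximating forms:
\begin{align*}
\Ea_t[f] - \E_t[f] \;=\; \tfrac{1}{t}\spr{(P_t - \Pa_t)f, f} \;=\; -\tfrac{1}{t}\spr{P_t(e^{tB}-I)f, f}, \quad t>0,
\end{align*}
and pass to the limit $t \searrow 0$. Because $B$ is bounded, $\tfrac{1}{t}(e^{tB}-I)f \to Bf$ in $L^2(\R^d)$; combined with the $L^2$-contractivity and strong continuity of $(P_t)$ this yields $\tfrac{1}{t}P_t(e^{tB}-I)f \to Bf$ in $L^2(\R^d)$, hence
\begin{align*}
\lim_{t \searrow 0}\bigl(\Ea_t[f] - \E_t[f]\bigr) \;=\; -\spr{Bf, f} \;=\; |\sigma|\,\|f\|_2^2 - \spr{\sigma f, f}.
\end{align*}
Both $t \mapsto \Ea_t[f]$ and $t \mapsto \E_t[f]$ are monotone decreasing with limits in $[0,\infty]$, and the right-hand side is a finite nonnegative constant, so $\Ea[f]$ and $\E[f]$ are simultaneously finite or infinite and, when finite, satisfy the first displayed identity of the lemma. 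This also gives $\D(\E) = \D(\Ea)$.

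For the integral representation I would invoke the classical L\'evy--Khintchine formula for the fractional Laplacian,
\begin{align*}
\Ea[f] \;=\; \tfrac{1}{2}\int_{\R^d}\!\int_{\R^d}(f(x)-f(y))^2\,\nua(x-y)\,\d x\,\d y,
\end{align*}
with $\D(\Ea)$ characterized as those $f \in L^2$ for which this double integral is finite (obtained directly from \eqref{eq:trf} and Plancherel applied to $t^{-1}(1-e^{-t|\xi|^\alpha})$), together with the symmetrization identity
\begin{align*}
|\sigma|\,\|f\|_2^2 - \spr{\sigma f, f} \;=\; \tfrac{1}{2}\int_{\R^d}\!\int_{\R^d}(f(x)-f(y))^2\,\sigma(x-y)\,\d x\,\d y,
\end{align*}
which uses only $\sigma(-y)=\sigma(y)$ and $|\sigma| = \int \sigma$. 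Subtracting and using $\nua - \sigma = \nu$ from \eqref{A1} gives the stated formula for $\E[f]$ and the domain characterization.

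The only mildly delicate point is justifying that $\Ea[f]$ and $\E[f]$ must be simultaneously finite, but this is immediate from the boundedness of $B$ and the monotonicity of the approximating quadratic forms; no other substantive obstacle arises.
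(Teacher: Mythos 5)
Your argument is correct and is essentially the paper's own proof in different clothing: writing $\Pa_t = P_t e^{tB}$ with $B = S - |\sigma| I$ and computing $\lim_{t\searrow 0}\frac{1}{t}\spr{P_t(e^{tB}-I)f,f} = \spr{Bf,f}$ is just an operator-exponential repackaging of the paper's term-by-term expansion of \eqref{eq:relation_kernels}, where the $k=1$ term yields $\spr{\sigma f,f}$ by strong continuity and the $k\ge 2$ tail vanishes by the norm bound \eqref{eq:sigma_norm}. The remaining steps — handling possibly infinite limits via monotonicity and the boundedness of the correction, the symmetrization identity for $\sigma$, and subtracting from the known double-integral representation of $\Ea$ — coincide with the paper's argument, so there are no gaps.
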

\begin{proof}
We first establish the first identity. For every $t>0$, by \eqref{eq:relation_kernels} we have
\begin{align*} 
\spr{f-\Pa_t f,f} & = \spr{f - e^{-|\sigma|t}f,f} + e^{-|\sigma|t}\spr{f-P_t f,f} \\
& \ \ \ - \spr{ e^{-|\sigma|t} t P_t(\sigma f),f} - \spr{ e^{-|\sigma|t} \sum_{k=2}^\infty \frac{t^k}{k!}P_t(\sigma^{k*}f),f}. 
\end{align*}
Hence, 
\begin{align}\label{eq:approx_free_form}
\Ea_t[f] & = \frac{1}{t}\spr{f - e^{-|\sigma|t}f,f} + e^{-|\sigma|t}\E_t[f] \nonumber\\
& \ \ \ -  e^{-|\sigma|t}\spr{ P_t(\sigma f),f} - e^{-|\sigma|t} \spr{  \sum_{k=2}^\infty \frac{t^{k-1}}{k!}P_t(\sigma^{k*}f),f}. 
\end{align}
As shown above, $\sigma f \in L^2(\R^d)$. Thus, by strong continuity, $\spr{ P_t(\sigma f),f} \to \spr{\sigma f,f}$, as $t \searrow 0$. Moreover, by \eqref{eq:sigma_norm},
\begin{align*}
\left|\spr{\sum_{k=2}^\infty \frac{t^{k-1}}{k!}P_t(\sigma^{k*}f),f}\right| 
         & \leq \sum_{k=2}^\infty \frac{t^{k-1}}{k!} \left|\spr{P_t(\sigma^{k*}f),f}\right| \\ 
				  & \leq \sum_{k=2}^\infty \frac{t^{k-1}}{k!} \left\|P_t(\sigma^{k*}f)\right\|_2 \left\|f\right\|_2 \\ 
				 & \leq \sum_{k=2}^\infty \frac{t^{k-1}}{k!} |\sigma|^k \left\|f\right\|_2^2 \leq t |\sigma|^2 \left\|f\right\|_2^2 e^{|\sigma|t} \to 0, \quad \text{as} \ t \searrow 0. 
\end{align*}
Then, by letting $t \searrow 0$ in \eqref{eq:approx_free_form}, we get the first identity of the lemma. In particular, we see that $\D(\E) = \D(\Ea)$. 

In order to see the second assertion, we observe that
\begin{align*}
|\sigma|\|f\|_2^2 - \spr{\sigma f, f} & = \int_{\R^d} f(x) \int_{\R^d} (f(x)-f(y)) \sigma(x-y)dy dx \\
                                      & = \frac{1}{2} \int_{\R^d} \int_{\R^d} (f(x)-f(y))^2 \sigma(x-y)dy dx \geq 0, \quad f \in L^2(\R^d),
\end{align*}
by symmetrization. By \eqref{eq:sigma_norm} this double integral is finite for every $f \in L^2(\R^d)$. Furthermore, it is known that
\begin{align*}
\Ea[f] = \frac{1}{2} \int_{\R^d}\int_{\R^d} \big(f(x) - f(y)\big)^2 \nua(x-y) \d{x}\d{y}, \quad f \in L^2(\R^d).
\end{align*}
Therefore, by the first identity of the lemma proved above, we see that 
\[
\E[f]<\infty \quad \Longleftrightarrow \quad \int_{\R^d}\int_{\R^d} \big(f(x) - f(y)\big)^2 \nua(x-y) \d{x}\d{y} < \infty.
\]
Consequently, by \eqref{A1},
\[
\E[f]<\infty \quad \Longleftrightarrow \quad \int_{\R^d}\int_{\R^d} \big(f(x) - f(y)\big)^2 \nu(x-y) \d{x}\d{y} < \infty
\]
and the equality 
\[
\E[f] = \frac{1}{2} \int_{\R^d}\int_{\R^d} \big(f(x) - f(y)\big)^2 \nua(x-y) \d{x}\d{y} - \frac{1}{2} \int_{\R^d} \int_{\R^d} (f(x)-f(y))^2 \sigma(x-y)dy dx
\]
holds for $f \in L^2(\R^d)$. This proves the second assertion of the lemma.
\end{proof}

\subsection{Quadratic form of the semigroup $(\tP_t)_{t > 0}$}

In this section we identify the form $\tE$ of the semigroup $(\tP_t)_{t > 0}$ when acting on $\D(\E)$. 
Similarly as above, we define for $f \in L^2(\R^d)$
\begin{align*}
\tE_t[f] = \frac{1}{t}\spr{f-\tP_tf,f}, \qquad \tE[f] = \lim_{t\to0}\tE_t[f] .
\end{align*} 
The domain of $\tE$ is
\begin{align*}
\D(\tE) = \{f \in L^2 \colon \tE[f] <\infty\},
\end{align*} 
see \cite[Chapter 6]{vanCasteren} and \cite[Chapter 10]{Schmudgen}.
The corresponding bilinear form is defined as
\begin{align*}
\tE[f,g] =\lim_{t \to 0} \frac{1}{t}\spr{f-\tP_tf,g},
\end{align*} 
for any $f,g \in \D(\tE)$, and the following polarization formula 
\[
\tE[f,g] = \frac{1}{2}\left(\tE[f+g]-\tE[f]-\tE[g]\right),
\]
holds for $f,g \in \D(\tE)$.
Clearly, the form $\big(\tEa,\D(\tEa)\big)$ associated with the semigroup $(\tPa_t)_{t > 0}$ is a special case of $\big(\tE,\D(\tE)\big)$.
The following lemma partly extends \cite[Lemma 5.1]{KB-TG-TJ-DP-2019}. 

\begin{lemma}\label{lem:domainsEtE}
Under \eqref{A1}, for $\kappa \leq \kappa^*$, we have $\D(\E) \subset \D(\tE)$ and
\begin{align}\label{Eq:domainsEtE}
\tE[f] &= \E[f] - \int_{\R^d} V(x)f^2(x)  \d{x}, \quad f \in \D(\E).
\end{align}

\end{lemma}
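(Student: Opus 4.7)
The plan is to derive a form-level perturbation identity from the kernel formula \eqref{eq:perturbation} and then pass carefully to the limit $t \to 0^+$. Starting from
\[
\tp(t,x,y) = p(t,x,y) + \int_0^t \int_{\R^d} p(s,x,z) V(z) \tp(t-s,z,y)\,\d z\,\d s,
\]
I would pair both sides with $f(x)f(y)$ for $f \in L^2(\R^d)$ and apply Fubini; absolute integrability is controlled by the pointwise bound $|f(x)f(y)|\,\tp(t,x,y) \leq |f|(x)|f|(y)\,\tp(t,x,y)$ together with $\langle \tP_t|f|,|f|\rangle \leq e^{|\sigma|t}\|f\|_2^2$ and the nonnegativity of the remaining factors. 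Using the symmetry of $p(s,\cdot,\cdot)$ one then obtains
\[
\langle \tP_t f - P_t f,\, f\rangle = \int_0^t \int_{\R^d} V(z)\, P_s f(z)\, \tP_{t-s} f(z)\,\d z\,\d s.
\]
Dividing by $t$ yields the form-level identity
\[
\tE_t[f] = \E_t[f] - \frac{1}{t}\int_0^t \langle V P_s f,\, \tP_{t-s} f\rangle\,\d s,\qquad f \in L^2(\R^d),\ t>0.
\]

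For $f \in \D(\E)$ the first term tends to $\E[f]$ by definition, and Lemma \ref{lem:domainsEaE} yields $\D(\E)=\D(\Ea)$, so the fractional Hardy inequality \eqref{eq:frac_Hardy} gives $\int V f^2 \leq (\kappa/\kappa^*)\Ea[f] < \infty$ in the subcritical case. The crux is to show that the second term converges to $\int V f^2$ as $t\to 0^+$. After substituting $s=tu$ it becomes $\int_0^1 \langle V P_{tu} f, \tP_{t(1-u)} f\rangle\,\d u$. For the liminf direction I would split $f=f_+ - f_-$ so that each of the four integrals $\int V P_{tu}f_i\cdot\tP_{t(1-u)}f_j\,\d z\,\d u$ has a nonnegative integrand; strong continuity of $(P_t)_{t>0}$ and $(\tP_t)_{t>0}$ on $L^2$ together with subsequence extraction gives the required a.e.\ pointwise convergence, and Fatou's lemma yields the lower bound $\int V f^2$.

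For the matching upper bound I would apply Cauchy--Schwarz in the inner integral,
\[
\langle V P_{tu} f, \tP_{t(1-u)} f\rangle \leq \Bigl(\int V (P_{tu} f)^2\Bigr)^{1/2}\Bigl(\int V (\tP_{t(1-u)} f)^2\Bigr)^{1/2},
\]
and bound each factor. The factor with $P_s f$ is handled by Hardy applied to $P_s f - f$ together with the Fourier-side identity $\Ea[P_s f - f] = \int|\xi|^\alpha(e^{-s\psi(\xi)}-1)^2|\widehat f(\xi)|^2\,\d\xi$, which tends to $0$ by dominated convergence (the integrand is dominated by $|\xi|^\alpha|\widehat f|^2 \in L^1$), so $\sqrt V P_s f \to \sqrt V f$ in $L^2$. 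The factor with $\tP_r f$ is handled through $|\tP_r f| \leq \tP_r|f| \leq e^{|\sigma|r}\tPa_r|f|$ from \eqref{eq:est_by_stable_hardy}, combined with the identity $\tEa[g]=\Ea[g] - \int V g^2$ for $g \in \D(\Ea)$ already proved in \cite[Lemma 5.1]{KB-TG-TJ-DP-2019}: applying this to $g=\tPa_r|f|$ and using the self-adjoint contractivity $\tEa[\tPa_r|f|]\leq\tEa[|f|]$ gives $(1-\kappa/\kappa^*)\int V(\tPa_r|f|)^2 \leq (\kappa/\kappa^*)\Ea[|f|]$, a uniform-in-$r$ bound valid for $\kappa<\kappa^*$.

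The main obstacle is precisely this uniform integrability of $V\,(\tP_r f)^2$ as $r \to 0^+$: because the identification $e^{-tH}=\tP_t$ is only established later in the paper, one cannot directly assert $\tP_r f \in \D(\Ea)$ and apply Hardy to it. Routing through the domination by $\tPa$ and the already-proved identity for $\tEa$ circumvents this. The critical case $\kappa=\kappa^*$ requires an extra approximation step, for example monotone convergence $\kappa_n\nearrow\kappa^*$ together with monotonicity of the perturbation series \eqref{eq:pert_series} in $V$. Once both inequalities are in place, dominated convergence in $u$ yields $\lim_{t\to0^+}\tfrac{1}{t}\int_0^t\langle VP_sf,\tP_{t-s}f\rangle\,\d s=\int V f^2$, so $\tE_t[f]\to\E[f]-\int V f^2 < \infty$, which both proves $f\in\D(\tE)$ and establishes \eqref{Eq:domainsEtE}.
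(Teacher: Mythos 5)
Your derivation of the form-level identity $\tE_t[f]=\E_t[f]-\tfrac1t\int_0^t\langle VP_sf,\tP_{t-s}f\rangle\,\d s$ and the $L^1$/a.e.-subsequence treatment of the averaged product coincide with the paper's argument. The divergence is in how you prove that the potential term converges to $\int Vf^2$, and this is where a genuine gap appears: the lemma is stated for $\kappa\le\kappa^*$, but your key uniform bound
$(1-\kappa/\kappa^*)\int V(\tPa_r|f|)^2\le(\kappa/\kappa^*)\Ea[|f|]$ is vacuous at $\kappa=\kappa^*$, so your entire upper-bound/uniform-integrability machinery collapses exactly in the critical case. The proposed fix (approximating $\kappa_n\nearrow\kappa^*$ and using monotonicity of the perturbation series) is not worked out and is not routine: the kernel $\tp$ itself depends on $\kappa$ through the series \eqref{eq:pert_series}, so one must justify an interchange of the limits $n\to\infty$ and $t\to0$ in $\tE_t$, which is the whole difficulty. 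The paper avoids this entirely by reducing to $0\le f$, bounding the potential term from above via the kernel dominations $p\le e^{|\sigma|t}\pa$ and $\tp\le e^{|\sigma|t}\tpa$ by $e^{|\sigma|t}\big(\Ea_t[f]-\tEa_t[f]\big)$, and then invoking \cite[Lemma 5.1]{KB-TG-TJ-DP-2019}, which already covers $\kappa\le\kappa^*$ for the fractional Laplacian; the Fatou direction (nonnegative integrand, no splitting needed) supplies the other inequality.

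A second, smaller defect: your Fatou step for signed $f$ does not close. Writing $f=f_+-f_-$ produces the cross terms $-\int VP_{tu}f_+\cdot\tP_{t(1-u)}f_-$ with a \emph{negative} sign, for which Fatou gives $\liminf\ge0$ rather than the needed $\limsup\le0$; so the splitting alone does not yield the claimed lower bound. This is repairable in the subcritical case using your own Cauchy--Schwarz bounds, but the paper's route is cleaner: prove everything for $0\le f\in\D(\E)$ first, then deduce $|f|\in\D(\E)$ from $\E_t[f]\ge\E_t[|f|]$, conclude $f_\pm\in\D(\tE)$ by linearity, and extend \eqref{Eq:domainsEtE} to signed $f$ by polarization. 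There is also an ordering issue in your argument: applying the identity $\tEa[g]=\Ea[g]-\int Vg^2$ to $g=\tPa_r|f|$ presupposes $\tPa_r|f|\in\D(\Ea)$, i.e.\ the inclusion $\D(\tEa)\subset\D(\Ea)$, which in the paper is only established later (Theorem \ref{th:forms_equal}) and again only for $\kappa<\kappa^*$.
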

\begin{proof}
Let $0 \le f \in \D(\E)$. By perturbation formula \eqref{eq:perturbation} and Tonelli's theorem,
\begin{align*}
\tE_t[f] & = \E_t[f] - \int_{\Rd}\int_{\Rd} f(x) f(y) \frac{1}{t} \int_0^t \int_{\Rd} p(u,x,z) V(z)\tp(t-u,z,y) \d{z}\d{u}\d{x}\d{y} \\
         & = \E_t[f] - \int_{\Rd} V(z) \frac{1}{t} \int_0^t P_uf(z)  \tP_{t-u}f(z) \d{u} \d{z} .
\end{align*}
First note that 
\begin{align}\label{eq:L1conv}
\frac{1}{t} \int_0^t P_uf  \tP_{t-u}f \d{u} \stackrel{L^1}{\longrightarrow} f^2,
\end{align}
as $t \to 0$. Indeed,
\begin{align*}
P_uf  \tP_{t-u}f - f^2 = (P_uf -f)\tP_{t-u}f + f(\tP_{t-u}f -f).
\end{align*}
Hence, by H\"older inequality,
\begin{align*}
\left\|\frac{1}{t} \int_0^t P_uf  \tP_{t-u}f \d{u} - f^2  \right\|_1 &= \left\|\frac{1}{t} \int_0^t (P_uf  \tP_{t-u}f  - f^2)  \d{u}\right\|_1 \\
&\le \frac{1}{t} \int_0^t \left\|(P_uf - f) \tP_{t-u}f + f(\tP_{t-u}f  - f) \right\|_1 \d{u}\\
&\le \frac{1}{t} \int_0^t \big(\norm{P_uf - f}{2} \norm{\tP_{t-u}f}{2} + \norm{f}{2} \norm{\tP_{t-u}f  - f}{2}\big) \d{u}.
\end{align*}
Recall that $\{P_t\}_{t>0}$ and $\{\tP_t\}_{t>0}$ are strongly continuous semigroups of bounded operators on $L^2(\Rd)$. Therefore, 
\begin{align*}
\left\|\frac{1}{t} \int_0^t P_uf  \tP_{t-u}f \d{u} - f^2  \right\|_1 
&\le \norm{f}{2} \left(\frac{1}{t}\int_0^t \norm{P_uf - f}{2} e^{|\sigma|(t-u)} + \norm{\tP_{t-u}f  - f}{2} \d{u}\right) \\
&\le \norm{f}{2} e^{|\sigma|t} \sup_{u \in (0,t]}  \left(\norm{P_uf - f}{2} + \norm{\tP_{t-u}f  - f}{2} \right) \stackrel{t \to 0}{\longrightarrow} 0.\\
\end{align*}
Now, let $t_n \to 0$ be a sequence  such that 
\begin{align*}
\liminf_{t \to 0}  \int_{\Rd} V(z) \frac{1}{t} \int_0^t P_uf(z)  \tP_{t-u}f(z) \d{u} \d{z} = \lim_{n \to \infty}   \int_{\Rd} V(z) \frac{1}{t_n} \int_0^{t_n} P_uf(z) \tP_{t_n-u}f(z) \d{u}\d{z}
\end{align*}
By \eqref{eq:L1conv}, we can choose a subsequence $t_{n_k}\to 0$ such that
\begin{align*}
\frac{1}{t_{n_k}} \int_0^{t_{n_k}} P_uf(z) \tP_{t_{n_k}-u}f(z) \d{u} \stackrel{k\to\infty}{\longrightarrow} f^2(z), \quad \text{a.e.} \ z \in \Rd.
\end{align*}
Hence, by Fatou's lemma, we have
\begin{align*}
\limsup_{t \to 0}\tE_{t}(f) &\le \E[f] - \liminf_{t \to 0}  \int_{\Rd} V(z) \frac{1}{t} \int_0^t P_uf(z)  \tP_{t-u}f(z) \d{u} \d{z}\\ 
&= \E[f] - \lim_{k \to \infty}   \int_{\Rd} V(z) \frac{1}{t_{n_k}} \int_0^{t_{n_k}} P_uf(z) \tP_{t_{n_k}-u}f(z) \d{u} \\
&\le \E[f] -  \int_{\Rd} V(z)  \lim_{k \to \infty}  \frac{1}{t_{n_k}}\int_0^{t_{n_k}}  P_uf(z) \tP_{t_{n_k}-u}f(z) \d{u}\d{z} \\
&= \E[f] -  \int_{\Rd}  V(z) f^2(z) \d{z}.
\end{align*}
On the other hand, since $0 \le f \in \D(\E) = \D(\E^{(\alpha)})$, by \eqref{eq:domination_free_kernels} and \eqref{eq:est_by_stable_hardy},
\begin{align*}
\liminf_{t \to 0}\tE_{t}[f] & \ge \E[f] - \limsup_{t \to 0}  \int_{\Rd}  V(z) \frac{1}{t} \int_0^{t} P_uf(z) \tP_{t-u}f(z) \d{u}\d{z}\\ 
& \ge \E[f] - \limsup_{t \to 0}   e^{|\sigma| t} \int_{\Rd}  V(z) \frac{1}{t} \int_0^{t} P_u^{(\alpha)}f(z)\tP^{(\alpha)}_{t-u}f(z) \d{u}\d{z},
\end{align*}
and by using \eqref{eq:perturbation} for $\pa(t,x,y)$ and $\tpa(t,x,y)$, we get 
\[
\int_{\Rd}  V(z) \frac{1}{t} \int_0^{t} P_u^{(\alpha)}f(z)\tP^{(\alpha)}_{t-u}f(z) \d{u}\d{z} = \Ea_t[f] - \tEa_t[f]. 
\]
Consequently, by \cite[Lemma 5.1]{KB-TG-TJ-DP-2019}, 
\[
\lim_{t \to 0} \int_{\Rd}  V(z) \frac{1}{t} \int_0^{t} P_u^{(\alpha)}f(z)\tP^{(\alpha)}_{t-u}f(z) \d{u}\d{z} = \int_{\R^d} V(z)f^2(z)dz.
\]
Therefore $f \in \D(\tE)$ and \eqref{Eq:domainsEtE} holds for $0 \le f \in \D(\E)$. 
Now, consider arbitrary $f \in \D(\E)$. We observe that $|f| \in \D(\E)$. Indeed, 
\begin{align*}
\E_t[f,f] = \frac{1}{t}\left( \langle f,f\rangle - \left\langle P_{t/2}f, P_{t/2}f\right\rangle\right) \ge \frac{1}{t}\left( \langle |f|,|f|\rangle - \left\langle P_{t/2}|f|, P_{t/2}|f|\right\rangle\right) = \E_t[|f|,|f|].
\end{align*}
Hence, $\E(f,f) \ge \E(|f|,|f|)$, which yields $|f| \in \D(\E)$. Finally, $f =f_+ - f_- \in \D(\tE)$, since $f_+ = \frac{|f|+f}{2} \in \D(\tE)$, $f_- = \frac{|f|-f}{2} \in \D(\tE)$ and $\D(\tE)$ is a linear space.

We are left to show that \eqref{Eq:domainsEtE} extends to an arbitrary (in particular, signed) $f \in \D(\E)$.
But this holds by standard polarization identities for the forms $\E$ and $\tE$, see e.g.\ the last lines of the proof of \cite[Lemma 5.1]{KB-TG-TJ-DP-2019}.

\end{proof}

\subsection{Schr\"odinger operators with subcritical singular potential and their semigroups} \label{sec:SchOp_def}
Throughout this section we assume that $\kappa < \kappa^*$, i.e.\ we consider only the subcritical potential $V(x) = \kappa|x|^{-\alpha}$. In this case the Schr\"odinger operator $H= -L-V$ can be defined as a form-sum. Indeed, by the fractional Hardy inequality \eqref{eq:frac_Hardy} and the identity in Lemma \ref{lem:domainsEaE}, we easily get the inequalities
\[
\kappa \int_{\Rd} \frac{f^2(x)}{|x|^{\alpha}} \d{x} \leq \frac{\kappa}{\kappa^*} \Ea[f] \leq \frac{\kappa}{\kappa^*} \E[f] + \frac{\kappa}{\kappa^*} |\sigma| \left\|f\right\|^2_2, \quad f \in \D\big(\E\big).
\]
This means that the form of the potential $V(x) = \kappa|x|^{-\alpha}$ is relatively $\E$-bounded with relative $\E$-bound $\kappa/\kappa^* < 1$. Hence, by the \emph{KLMN theorem} (see e.g.\ \cite[Theorem 10.21]{Schmudgen}), there exists a unique, bounded below, self-adjont operator $H$, called the form sum of the operators $-L$ and $-V$ (we simply write $H=-L-V$), such that the form $\big(\E_H, \D(\E_H)\big)$ of $H$ satisfies 
\[
\D(\E_H) = \D(\E) \quad \text{and} \quad \E_H[f,g] = \E[f,g] - \int_{\R^d}V(x) f(x)g(x) \d{x}, \quad f,g \in \D(\E_H).
\]

We will now show that the form of the Schr\"odinger operator $H$ and the form $\tE$ discussed in the previous section are equal. Consequently, the strongly continuous semigroup of operators corresponding to $H$ can be identified with the semigroup $\big\{\widetilde{P}_t: t \geq 0\big\}$ constructed above. In particular, $e^{-tH}$, $t>0$, are integral operators with kernels $\tp(t,x,y)$.
This is crucial for our further investigations.

\begin{theorem} \label{th:forms_equal} Under \eqref{A1}, for $\kappa< \kappa^*$, we have
\begin{align} \label{eq:equality_of_domains}
\D\big(\tE\big) = \D\big(\tEa\big)  = \D\big(\Ea\big) =   \D(\E).
\end{align}
In particular, $\big(\E_H, \D(\E_H)\big)  = \big(\tE, \D\big(\tE\big)\big)$ and the semigroups $\big\{e^{-tH}:t \geq 0\big\}$ and $\big\{\widetilde{P}_t: t \geq 0\big\}$ are equal on $L^2(\Rd)$.
\end{theorem}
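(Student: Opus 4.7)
The plan is to reduce the theorem to the reverse inclusion $\D(\tE) \subset \D(\E)$. Combined with Lemma \ref{lem:domainsEtE} this delivers $\D(\tE) = \D(\E)$, while the identity $\D(\Ea) = \D(\E)$ of Lemma \ref{lem:domainsEaE} together with the fractional-Laplacian case $\D(\tEa) = \D(\Ea)$ supplied in \cite{KB-TG-TJ-DP-2019} completes the chain \eqref{eq:equality_of_domains}. Since Lemma \ref{lem:domainsEtE} and the KLMN construction of $H$ already give $\tE[f] = \E[f] - \int_{\R^d} V(x) f^2(x) \d x = \E_H[f]$ on $\D(\E) = \D(\E_H)$, once the domain equality is established the two closed symmetric forms bounded below coincide, and the one-to-one correspondence between such forms and strongly continuous self-adjoint semigroups on $L^2(\R^d)$ yields $\tP_t = e^{-tH}$.

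For $f \ge 0$ in $\D(\tE)$ I would exploit the pointwise domination $\tp(t,x,y) \le e^{|\sigma|t}\tpa(t,x,y)$ of \eqref{eq:est_by_stable_hardy}. It gives $\spr{\tP_t f, f} \le e^{|\sigma|t}\spr{\tPa_t f, f}$; rearranging $\|f\|_2^2 - \spr{\tPa_t f, f} \le \|f\|_2^2 - e^{-|\sigma|t}\spr{\tP_t f, f}$ and dividing by $t$ yields
\[
\tEa_t[f] \le \tfrac{1}{t}\spr{f - e^{-|\sigma|t}\tP_t f, f}, \quad t > 0.
\]
The left-hand side is the Dirichlet-type quotient for the self-adjoint contraction semigroup $(\tPa_t)_{t \ge 0}$ (cf.\ \cite[Proposition 2.4]{KB-TG-TJ-DP-2019}); the right-hand side is the analogous quotient for the self-adjoint contraction semigroup $(e^{-|\sigma|t}\tP_t)_{t \ge 0}$. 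By the spectral-calculus identity $\tfrac{1}{t}\spr{(I-T_t)g,g} = \int \tfrac{1-e^{-t\lambda}}{t}\d\mu_g(\lambda)$ and the monotonicity of $\lambda \mapsto \tfrac{1-e^{-t\lambda}}{t}$ in $t$, both quotients are monotone increasing as $t \to 0^+$, with limits in $[0,\infty]$ equal to $\tEa[f]$ and $\tE[f] + |\sigma|\|f\|_2^2$, respectively. Passing to the limit gives $\tEa[f] \le \tE[f] + |\sigma|\|f\|_2^2$, so $f \in \D(\tEa) = \D(\E)$.

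For signed $f \in \D(\tE)$, I would invoke the Beurling--Deny criterion for positivity-preserving semigroups applied to the contraction $(e^{-|\sigma|t}\tP_t)_{t \ge 0}$, whose positivity follows from $\tp \ge 0$. The associated non-negative closed form $\tE + |\sigma|\|\cdot\|_2^2$ (with the same domain $\D(\tE)$) then satisfies $f \in \D(\tE) \Rightarrow |f| \in \D(\tE)$, so $f_\pm := (|f| \pm f)/2 \in \D(\tE)$. Since $f_\pm \ge 0$ the preceding paragraph gives $f_\pm \in \D(\E)$, hence $f = f_+ - f_- \in \D(\E)$.

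The main obstacle is justifying the monotone passage to the limit in the key inequality; this is resolved by recognizing both sides as Dirichlet-type quotients for suitably shifted self-adjoint contraction semigroups and invoking the standard spectral-calculus monotonicity. A secondary subtlety is the correct invocation of Beurling--Deny in the shifted setting, which is needed because $\tP_t$ itself is only $e^{|\sigma|t}$-bounded and not a contraction.
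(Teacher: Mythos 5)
Your argument for the inclusion $\D\big(\tE\big) \subset \D\big(\tEa\big)$ is correct and is essentially the paper's: the domination \eqref{eq:est_by_stable_hardy} gives $\tEa_t[f] \le \tE_t[f] + \tfrac{1}{t}(1-e^{-|\sigma|t})\spr{\tP_t f,f}$ for $f\ge 0$, and passing to the limit (justified by the spectral monotonicity of the quotients for the contraction semigroups $\tPa_t$ and $e^{-|\sigma|t}\tP_t$) yields $\tEa[f]\le \tE[f]+|\sigma|\|f\|_2^2$. Your use of the first Beurling--Deny criterion to reduce signed $f$ to $f_{\pm}\ge 0$ is a legitimate replacement for the paper's direct computation $\tE_t[f]\ge\tE_t[|f|]$ via $|\tP_{t/2}f|\le\tP_{t/2}|f|$. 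The identification of $\big(\E_H,\D(\E_H)\big)$ with $\big(\tE,\D(\tE)\big)$ and the resulting equality of semigroups are also handled as in the paper.

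The gap is the step ``$f\in\D\big(\tEa\big)=\D(\E)$'': you treat the equality $\D\big(\tEa\big)=\D\big(\Ea\big)$ as ``supplied in \cite{KB-TG-TJ-DP-2019}'', but what that reference provides (\cite[Lemma 5.1]{KB-TG-TJ-DP-2019}) is only the inclusion $\D\big(\Ea\big)\subset\D\big(\tEa\big)$ together with the form identity on $\D\big(\Ea\big)$. The reverse inclusion $\D\big(\tEa\big)\subset\D\big(\Ea\big)$ is exactly where the subcriticality $\kappa<\kappa^*$ enters, and it is false at criticality, so it cannot be waved through. The paper proves it as follows: combining the form identity with the fractional Hardy inequality \eqref{eq:frac_Hardy} gives $\Ea[f]\le\frac{\kappa^*}{\kappa^*-\kappa}\,\tEa[f]$ for $f\in\D\big(\Ea\big)$; then, using that $C_c^\infty(\Rd)$ is dense in $\D\big(\tEa\big)$ with respect to the norm $\sqrt{\tEa[\cdot]}+\|\cdot\|_2$ (\cite[Theorem 5.4]{KB-TG-TJ-DP-2019}) and that $\big(\Ea,\D(\Ea)\big)$ is closed, any $f\in\D\big(\tEa\big)$ is an $\Ea$-Cauchy limit of smooth compactly supported functions and hence lies in $\D\big(\Ea\big)$. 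Your proposal nowhere uses $\kappa<\kappa^*$ except through this unverified citation, which is a sign that a genuine step is missing; you need to supply the density-plus-closedness argument (or verify that the reference actually states the domain equality for subcritical $\kappa$).
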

\begin{proof}
The equality $ \D\big(\Ea\big) =   \D(\E)$ and the inclusion  $\D(\E) \subset \D\big(\tE\big)$ were established in Lemmas \ref{lem:domainsEaE} and \ref{lem:domainsEtE}, respectively. In order to complete the proof of \eqref{eq:equality_of_domains} we need to show the inclusions $\D\big(\tE\big) \subset \D\big(\tEa\big)  \subset \D\big(\Ea\big)$. Observe that by \eqref{eq:est_by_stable_hardy} we have for $0 \leq f \in L^2(\R^d)$
\begin{align*}
\tE_t[f] = \frac{1}{t}\left(\spr{f,f}-\spr{\tP_tf,f}\right) 
      & \geq \frac{1}{t}\left(\spr{f,f}-e^{|\sigma t|}\spr{\tPa_tf,f}\right) \\
			& = \frac{1}{t}\left(\spr{f,f}-\spr{\tPa_tf,f}\right) 
			  - \frac{1}{t} \left(e^{|\sigma t|}-1\right) \left\|\tPa_{\frac{t}{2}}f\right\|_2^2 \\ 
			&	\geq \tEa_t[f] - |\sigma| e^{|\sigma t|} \left\|f\right\|_2^2.
\end{align*}
Letting $t \to 0$, we see that if $0 \leq f \in \D\big(\tE\big)$, then $f \in \D\big(\tEa\big)$. By the same argument as in the proof of Lemma \ref{lem:domainsEtE}, this extends to a general $f = f_+-f_- \in \D\big(\tE\big)$, giving the full inclusion $\D\big(\tE\big) \subset \D\big(\tEa\big)$.

We are left to show that $\D\big(\tEa\big)  \subset \D\big(\Ea\big)$. First note that by \cite[Lemma 5.1]{KB-TG-TJ-DP-2019} and the fractional Hardy inequality \eqref{eq:frac_Hardy}, we have $\D\big(\Ea\big) \subset \D\big(\tEa\big)$ and
\begin{align*}
\Ea[f] = \tEa[f] + \frac{\kappa}{\kappa^*} \kappa^*\int_{\Rd} \frac{f^2(x)}{|x|^{\alpha}}  \d{x} \le \tEa[f] + \frac{\kappa}{\kappa^*}\Ea[f],
\quad f \in \D\big(\Ea\big).
\end{align*}
Hence,
\begin{align}\label{ineq:tEaEa}
\Ea[f] \le  \frac{\kappa^*}{\kappa^*-\kappa}\tEa[f], \quad f \in \D\big(\Ea\big).
\end{align}
By \cite[Theorem 5.4]{KB-TG-TJ-DP-2019}, $C_c^\infty(\Rd)$ is dense in $\D\big(\tEa\big)$ with the norm $\sqrt{\tEa[\cdot]} + \|\cdot\|_{2}$. Furthermore, $C_c^\infty(\Rd) \subset \D\big(\Ea\big)$. We will now use these facts to complete the proof.

Let $f \in \D\big(\tEa\big)$ and let $f_n \in C_c^\infty(\Rd)$ be a sequence such that $\tEa[f-f_n] \to 0$ and $\|f-f_n\|_{2} \to 0$ as $n \to \infty$. In particular, $\tEa[f_n-f_m] \to 0$ and $\|f_n-f_m\|_{2} \to 0$ as $n, m \to \infty$. By \eqref{ineq:tEaEa} we obtain that $\Ea[f_n-f_m] \to 0$ as $n, m \to \infty$. Since $\big(\Ea,\D\big(\Ea\big)\big)$ is a closed form, we obtain that $f \in \D\big(\Ea\big)$, showing the inclusion $\D\big(\tEa\big) \subset \D\big(\Ea\big)$. 

The equality  $\big(\E_H, \D(\E_H)\big)  = \big(\tE, \D\big(\tE\big)\big)$ follows directly from \eqref{eq:equality_of_domains} and Lemma \ref{lem:domainsEtE}. Since, by definition, both these forms are symmetric, bounded below and closed bilinear forms, they uniquely determine the strongly continuous semigroups of bounded self-adjoint operators on $L^2(\R^d)$, see e.g.\ \cite[Theorem 6.2 (b)]{vanCasteren}.
This completes the proof.
\end{proof}

\section{Pointwise estimates of eigenfunctions} \label{sec:ef_est}

In this section we find pointwise estimates for eigenfunctions of the Schr\"odinger operators $H = -L-V$ with subcritical potential $V(x)=\kappa |x|^{-\alpha}$, i.e.\ we consider the case $\kappa < \kappa^*$.

Throughout this section we assume that $\varphi \in L^2(\R^d)$ is an eigenfunction of the operator $H$ corresponding to a negative eigenvalue, i.e.
\begin{align} \label{eq:eigenfunction}
\text{there exists} \quad E<0 \quad \text{such that} \quad H\varphi = E \varphi.  
\end{align}
It is now crucial that by Theorem \ref{th:forms_equal} we have
\begin{align} \label{eq:eigenfunction_representation}
\varphi(x) = e^{Et} e^{-tH} \varphi(x) = e^{Et} \tP_t \varphi(x) = e^{Et} \int_{\R^d} \tp(t,x,y)\varphi(y) \d{y}, \quad t>0.
\end{align}

We first establish the continuity of eigenfunctions and the upper estimate around zero. 
We always assume that any eigenfunction $\varphi$ is normalized so that $\left\|\varphi\right\|_2 = 1$. 

\begin{proposition} \label{lem:est_at_zero}
Let \eqref{A1} hold and let $\kappa< \kappa^*$. Any $\varphi \in L^2(\R^d)$ satisfying \eqref{eq:eigenfunction} has a version which is continuous on $\R^d \setminus \left\{0\right\}$ and satisfies the following upper estimate: there exists $c>0$ such that
\[
|\varphi(x)| \leq c e^{|\sigma|+E} \left(1+\frac{1}{|x|^{\delta}}\right), \quad x \in \R^d,
\]
where $\delta$ is determined by \eqref{eq:kappa_and_delta}. The constant $c$ depends neither on $\varphi$, $E$ nor $\sigma$. 
\end{proposition}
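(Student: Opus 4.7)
The plan is to derive the proposition as an almost immediate consequence of the semigroup representation \eqref{eq:eigenfunction_representation} combined with the smoothing/pointwise bound already proved in Lemma \ref{eq:smooth}. The heavy lifting has been done upstream: Lemma \ref{eq:smooth} already produces a bound that carries the correct $|x|^{-\delta}$ profile near the origin and has an $L^2$-norm of the input on the right-hand side, while Theorem \ref{th:forms_equal} allows one to identify $e^{-tH}$ with $\tP_t$ and hence to rewrite the eigenrelation as $\varphi = e^{Et}\tP_t\varphi$ for every $t>0$.

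For the continuity statement I would fix any $t>0$, use the identity $\varphi = e^{Et}\tP_t\varphi$ (as elements of $L^2(\R^d)$), and invoke Lemma \ref{eq:smooth}, which guarantees $\tP_t\varphi \in C(\R^d\setminus\{0\})$. Thus $x \mapsto e^{Et}(\tP_t\varphi)(x)$ furnishes a continuous version of $\varphi$ on $\R^d\setminus\{0\}$; since two continuous representatives of the same $L^2$ function must agree pointwise, this version does not depend on the choice of $t$.

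For the pointwise estimate I would specialize the quantitative bound of Lemma \ref{eq:smooth} to $t=1$. There it reduces to
\[
|\tP_1\varphi(x)| \le c\, e^{|\sigma|}\,(1+1)\left(1+\frac{1}{|x|^{\delta}}\right)\|\varphi\|_2 = 2c\, e^{|\sigma|}\left(1+\frac{1}{|x|^{\delta}}\right),
\]
using the normalization $\|\varphi\|_2 = 1$. Multiplying by $e^E$ and using $\varphi(x) = e^E(\tP_1\varphi)(x)$ gives
\[
|\varphi(x)| \le 2c\, e^{|\sigma|+E}\left(1+\frac{1}{|x|^{\delta}}\right), \qquad x \neq 0,
\]
which is the claimed estimate. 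The constant $2c$ inherits from Lemma \ref{eq:smooth} its independence of $\varphi$, $E$ and $\sigma$.

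There is no substantial obstacle here: the proof is essentially a one-line application of Lemma \ref{eq:smooth}. The only minor point is the choice of $t$ in the specialization: any fixed $t>0$ would produce a bound of the same structural shape, but $t=1$ is chosen so that the prefactor matches the form $c\,e^{|\sigma|+E}(1+|x|^{-\delta})$ appearing in the statement.
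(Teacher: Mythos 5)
Your proposal is correct and coincides with the paper's own proof, which likewise reads the eigenrelation as $\varphi = e^{Et}\tP_t\varphi$ via Theorem \ref{th:forms_equal} and then applies Lemma \ref{eq:smooth} at $t=1$ (together with the normalization $\|\varphi\|_2=1$) to get both the continuous version and the stated bound. Nothing further is needed.
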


\begin{proof}
It follows directly from \eqref{eq:eigenfunction_representation} and Lemma \ref{eq:smooth} by taking $t=1$.
\end{proof}
In what follows, we work with the version of the eigenfunction $\varphi$ which is continuous on $\R^d \setminus \left\{0\right\}$. In particular, the eigenequations \eqref{eq:eigenfunction_representation} can always be understood pointwise.

Estimates at infinity will be given in terms of the resolvent kernel of the free operator $L$: 
\[
g_\lambda(x) = \int_0^\infty e^{-\lambda t} p(t,x) \d{t},  \qquad x \in \Rd, \ \lambda>0.
\]
It is not difficult to show that for every $\lambda>0$ the map $x \mapsto g_{\lambda}(x)$ is continuous on $\Rd \setminus \left\{0\right\}$. For a Borel function $f \geq 0$ we also define
\[
G_\lambda f(x) = \int_0^\infty e^{-\lambda t} P_t f(x) \d{t} = \int_{\Rd} g_\lambda(x-y) f(y) \d{y}, \quad \lambda>0.
\]
The last equality is a consequence of the Tonelli theorem. Clearly, the definition easily extends to any (signed) Borel $f$ 
for which the integrals are absolutely convergent. 

\begin{lemma}\label{lem:eigf_est}
Let \eqref{A1} hold and let $\kappa< \kappa^*$. If $\varphi \in L^2(\R^d)$ is such that \eqref{eq:eigenfunction} holds,
then
\begin{align}\label{Eq1:eigf_est}
|\varphi(x)| \le G_{|E|}(V|\varphi|)(x), \qquad x \in \Rd \setminus \left\{0\right\}.
\end{align}
If, in addition, $\varphi \ge 0$, then 
\begin{align}\label{Eq2:eigf_est}
\varphi(x) = G_{|E|} (V\varphi)(x), \qquad x \in \Rd \setminus \left\{0\right\}.
\end{align} 
\end{lemma}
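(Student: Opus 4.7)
The plan is to combine the pointwise identity $\varphi(x) = e^{Et}\tP_t\varphi(x)$, which holds on $\R^d \setminus \left\{0\right\}$ for the continuous version of $\varphi$ (since $e^{-tH} = \tP_t$ by Theorem \ref{th:forms_equal} and the right-hand side is continuous by Lemma \ref{eq:smooth}), with the perturbation formula \eqref{eq:perturbation} for $\tp$, and then to send $t\to\infty$. The role of the eigenequation in this argument is to let us substitute $\tP_{t-s}\varphi(z) = e^{-E(t-s)}\varphi(z)$ after the $y$-integral inside the perturbation expansion has been carried out, which collapses a double time-integral into a single Laplace transform and thus produces the resolvent kernel $g_{|E|}$.

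Inserting \eqref{eq:perturbation} into $\varphi(x) = e^{Et}\int_{\R^d}\tp(t,x,y)\varphi(y)\d y$ and exchanging the order of integration, we obtain the identity
\[
\varphi(x) = e^{Et}P_t\varphi(x) + \int_0^t\int_{\R^d} p(s,x,z)V(z)\bigl(e^{Et}\tP_{t-s}\varphi(z)\bigr)\d z\, \d s,
\]
and then, using $\tP_{t-s}\varphi = e^{-E(t-s)}\varphi$, this simplifies to
\[
\varphi(x) = e^{Et}P_t\varphi(x) + \int_0^t e^{Es}P_s(V\varphi)(x)\, \d s, \qquad t>0,\ x \in \R^d \setminus \left\{0\right\}.
\]
Fubini is justified by running the same expansion for the non-negative function $|\varphi|$: the resulting triple integral equals $\tP_t|\varphi|(x) - P_t|\varphi|(x)$, which is finite for every $x\ne 0$ by Lemma \ref{eq:smooth}.

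To pass to the limit $t\to\infty$, we control the unperturbed term by Cauchy--Schwarz,
\[
|P_t\varphi(x)| \le \left(\int_{\R^d} p(t,x,y)^2\,\d y\right)^{1/2}\left\|\varphi\right\|_2 = p_{2t}(0)^{1/2},
\]
and since $p_t(0) = (2\pi)^{-d}\int_{\R^d} e^{-t\psi(\xi)}\d\xi$ is non-increasing in $t$ and finite for every $t>0$ thanks to \eqref{eq:LK_comp}, we conclude $|e^{Et}P_t\varphi(x)| \le p_1(0)^{1/2} e^{Et} \to 0$. If $\varphi \ge 0$, monotone convergence applied to the non-negative integrand $e^{-|E|s}P_s(V\varphi)(x)$ yields $\int_0^t e^{-|E|s}P_s(V\varphi)(x)\,\d s \nearrow G_{|E|}(V\varphi)(x)$, producing \eqref{Eq2:eigf_est}. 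For a general (signed) $\varphi$, we first take absolute values in the displayed identity above, obtaining
\[
|\varphi(x)| \le |e^{Et}P_t\varphi(x)| + \int_0^t e^{-|E|s}P_s(V|\varphi|)(x)\,\d s,
\]
and then pass to the limit by monotone convergence to obtain \eqref{Eq1:eigf_est}.

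The main obstacle I expect is the Fubini manipulation in the second step: one has to be sure that the triple integral $\int_0^t\int_{\R^d}\int_{\R^d}p(s,x,z)V(z)\tp(t-s,z,y)|\varphi(y)|\,\d z\, \d s\, \d y$ is finite, so that the $y$-integral can be pushed inward and identified with $\tP_{t-s}\varphi(z)$. This is handled by noticing that, taking $|\varphi|$ in place of $\varphi$, the perturbation formula \eqref{eq:perturbation} identifies this very triple integral with $\tP_t|\varphi|(x) - P_t|\varphi|(x)$, which is finite for $x \ne 0$ by Lemma \ref{eq:smooth}. Once this is cleared, the remaining work -- Cauchy--Schwarz on $P_t$ and monotone convergence in $s$ -- is routine.
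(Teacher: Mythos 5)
Your proof is correct. The first half --- the pointwise eigenequation $\varphi = e^{Et}\tP_t\varphi$, the perturbation formula, the Fubini justification via the finiteness of $\tP_t|\varphi|(x)$ from Lemma \ref{eq:smooth}, and the resulting identity $\varphi(x) = e^{Et}P_t\varphi(x) + \int_0^t e^{-|E|s}P_s(V\varphi)(x)\,\d s$ --- is exactly the paper's derivation of \eqref{eq:phipert}. Where you diverge is the limiting step. The paper does not send $t\to\infty$ directly; instead it multiplies the identity by $e^{-\eps t}$, integrates over $t\in(0,\infty)$ to obtain $|\varphi(x)| \le \eps\, G_{|E|+\eps}|\varphi|(x) + G_{|E|+\eps}(V|\varphi|)(x)$, and then lets $\eps\to 0$. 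That route forces the paper to prove separately that $G_{|E|}|\varphi|(x)<\infty$ for $x\ne 0$, which takes up most of its proof (a Cauchy--Schwarz bound plus a case split $t\ge 1$ versus $t<1$ with another appeal to Lemma \ref{eq:smooth}). Your direct passage $t\to\infty$ sidesteps that entirely: all you need is $e^{Et}P_t\varphi(x)\to 0$, which your bound $|P_t\varphi(x)| \le p_{2t}(0)^{1/2}\|\varphi\|_2$ (valid by Cauchy--Schwarz, symmetry and Chapman--Kolmogorov, with $p_{2t}(0)$ non-increasing and finite by \eqref{eq:LK_comp}) delivers immediately, and then monotone convergence in $t$ produces $G_{|E|}(V|\varphi|)$ without any intermediate resolvent at a shifted parameter. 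Both arguments are sound; yours is the more economical of the two.
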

\begin{proof}
We first show \eqref{Eq1:eigf_est}. Recall that $E<0$. We let $\lambda:=|E|=-E >0$ to simplify the notation. By the eigenequation \eqref{eq:eigenfunction_representation} and the perturbation formula \eqref{eq:perturbation}, we have for $x \in \Rd \setminus \left\{0\right\}$
\begin{align}\label{eq:phipert}
\varphi(x) = e^{-\lambda t} \tP_t \varphi(x) 
&= e^{-\lambda t} P_t \varphi(x) + \int_0^t \int_{\Rd} e^{-\lambda s} p(s,x,z)V(z) e^{-\lambda(t-s)} \tP_{t-s} \varphi(z) \d{z}\d{s} \notag \\
&= e^{-\lambda t} P_t \varphi(x) + \int_0^t \int_{\Rd} e^{-\lambda s} p(s,x,z)V(z) \varphi(z) \d{z}\d{s}.
\end{align}
The change of the order of integration here is possible due to Fubini's theorem as we have
\[
\int_{\R^d} \int_0^t \int_{\R^d} e^{-\lambda s} p(s,x,z) V(z) e^{-\lambda(t-s)} \tp(t-s,z,y)|\varphi(y)| \d z \d s \d y \leq e^{-\lambda t} \tP_t |\varphi|(x) < \infty,
\]
by \eqref{eq:perturbation} and Lemma \ref{eq:smooth}. Let $\eps \in (0,1)$. By \eqref{eq:phipert}, we have
\begin{align*}
e^{-\eps t} |\varphi(x)| \leq  e^{-(\lambda+\eps)t} P_t|\varphi|(x) + e^{-\eps t} \int_0^t e^{-\lambda s}P_s (V |\varphi|)(x) \d{s}
\end{align*}
and, by integrating with respect to $t \in (0,\infty)$ on both sides of this inequality, we get
\begin{align}\label{eq:phiequal}
|\varphi(x)| \leq \eps G_{\lambda + \eps} |\varphi|(x) +  G_{\lambda+\eps}(V|\varphi|)(x).
\end{align}
For the last term on the right hand side we used Tonelli's theorem. Since the map $\lambda \mapsto g_{\lambda}(x)$ is decreasing, \eqref{eq:phiequal} leads to the final estimate
\[
|\varphi(x)| \leq \eps G_{\lambda} |\varphi|(x) +  G_{\lambda}(V|\varphi|)(x).
\] 
We are left to make sure that 
\begin{align} \label{eq:final_need} 
G_{\lambda} |\varphi|(x) < \infty, \quad x \neq 0.
\end{align} 
Once we know this holds, we can get \eqref{Eq1:eigf_est} by letting $\eps \to 0$. To this end, we first use the Cauchy--Schwarz inequality to show that 
\[
P_t |\varphi|(x) \leq \left(\int_{\R^d} p(t,x,y)\d y\right)^{1/2} \left(\int_{\R^d} p(t,x,y) |\varphi(y)|^2 \d y \right)^{1/2} = \left(\int_{\R^d} p(t,x,y) |\varphi(y)|^2 \d y\right)^{1/2} .
\]
Observe now that for $t \geq 1$ we have
\[
  p(t,x,y) = p_t(y-x) \leq p_t(0) = (2\pi)^{-d}\int_{\R^d} e^{-t \psi(\xi)}\d \xi \leq (2\pi)^{-d}\int_{\R^d} e^{-\psi(\xi)}\d \xi < \infty, \quad  x, y \in \R^d,
\]
by \eqref{eq:free_density}. This implies that
\[
\int_{\R^d} p(t,x,y) |\varphi(y)|^2 \d y \leq \frac{\left\|\varphi\right\|_2^2}{(2\pi)^{d}}\int_{\R^d} e^{-\psi(\xi)}\d \xi, \quad t \geq 1.
\]
On the other hand, for $t \in (0,1)$ and $x \neq 0$, 
\begin{align*}
\int_{\R^d} p(t,x,y) |\varphi(y)|^2 \d y & = \int_{|y| \leq 1 \wedge |x|/2} p(t,x,y) |\varphi(y)|^2 \d y
                                                           + \int_{|y| > 1 \wedge |x|/2} p(t,x,y) |\varphi(y)|^2 \d y \\
																				 & \leq  \sup_{|y| \leq 1 \wedge |x|/2} p(t,x,y) \left\|\varphi\right\|^2_2 
                                                           + \sup_{|y| > 1 \wedge |x|/2} |\varphi(y)|^2.									
\end{align*}
It follows from \eqref{eq:domination_free_kernels} and \eqref{eq:oppt} that
\[
\sup_{|y| \leq 1 \wedge |x|/2} p(t,x,y) \leq c_1 e^{|\sigma|} (2/|x|)^{d+\alpha}, \quad t \in (0,1).
\]
Moreover, by \eqref{eq:eigenfunction_representation} and the estimate in Lemma \ref{eq:smooth},
\[
\sup_{|y| > 1 \wedge |x|/2} |\varphi(y)|^2 \leq e^{2\lambda} \sup_{|y| > 1 \wedge |x|/2} (\tP_1|\varphi(y)|)^2 \leq c_2 \big(1+1/(1 \wedge |x|/2)\big)^2 \left\|\varphi\right\|^2_2,
\]
respectively. By putting together all the estimates above, we conclude that $\sup_{t>0} P_t |\varphi|(x) < \infty$, for every $x \neq 0$. This clearly gives \eqref{eq:final_need} and completes the proof of \eqref{Eq1:eigf_est}.

In order to get the identity \eqref{Eq2:eigf_est}, we just come back to \eqref{eq:phipert} and observe that now we have an equality in \eqref{eq:phiequal} because $\varphi \geq 0$. In particular, $\varphi(x) \geq  G_{\lambda+\eps}(V \varphi)(x)$. Finally, by letting $\eps \to 0$, we get
\[
 \varphi(x) \geq  G_{\lambda}(V \varphi)(x).
\]
Together with \eqref{Eq1:eigf_est} proven above, this gives the assertion \eqref{Eq2:eigf_est}.
\end{proof}

Below we need the following identity. It seems to be a standard fact, but we provide here a short proof for reader's convenience. 
\begin{lemma}\label{lem:Glconv}
For $\lambda>0$ we have
\begin{align}\label{Eq:Glconv}
g_\lambda^{\ast n}(x) = \int_0^\infty \frac{t^{n-1}}{(n-1)!}e^{-\lambda t} p_t(x) \d{t}\, \qquad x\in\Rd, \; n\in\N.
\end{align}
\end{lemma}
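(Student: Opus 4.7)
I would proceed by induction on $n$, using the Chapman--Kolmogorov equation \eqref{eq:ChKforp} to collapse the spatial convolution and then a Beta-type integral on the time side.

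The base case $n=1$ is just the definition of $g_\lambda$. For the inductive step, assume \eqref{Eq:Glconv} holds for some $n \geq 1$ and compute $g_\lambda^{\ast(n+1)}(x) = \int_{\Rd} g_\lambda(x-y)\,g_\lambda^{\ast n}(y)\,\d{y}$. Substituting the defining integral for $g_\lambda(x-y)$ and the inductive formula for $g_\lambda^{\ast n}(y)$, and swapping the spatial integration with the two time integrations (permitted by Tonelli since all integrands are nonnegative), the inner spatial integral is
\[
\int_{\Rd} p_s(x-y)\,p_t(y)\,\d{y} = p_{s+t}(x)
\]
by \eqref{eq:ChKforp}. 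This reduces the right-hand side to
\[
\int_0^\infty \!\!\int_0^\infty e^{-\lambda(s+t)} \frac{t^{n-1}}{(n-1)!}\, p_{s+t}(x)\,\d{s}\,\d{t}.
\]

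Finally, I would change variables $u = s+t$, keeping $t$ as the inner variable (so $t$ ranges over $(0,u)$ for each fixed $u > 0$), to obtain
\[
\int_0^\infty e^{-\lambda u} p_u(x) \left(\int_0^u \frac{t^{n-1}}{(n-1)!}\,\d{t}\right) \d{u} = \int_0^\infty \frac{u^n}{n!}\, e^{-\lambda u} p_u(x)\,\d{u},
\]
which is the claim for $n+1$.

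There is no real obstacle here: the only thing to check carefully is the legitimacy of interchanging the order of integration, which is immediate from Tonelli's theorem because $p_t \geq 0$ and $e^{-\lambda t} \geq 0$. The Beta integral $\int_0^u t^{n-1}\,\d{t} = u^n/n$ delivers the clean factorial bookkeeping that lets the induction close.
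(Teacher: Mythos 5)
Your proposal is correct and follows essentially the same route as the paper: induction on $n$, Tonelli's theorem to interchange the integrals, the Chapman--Kolmogorov identity \eqref{eq:ChKforp} to collapse the spatial convolution, and the substitution $u=s+t$ followed by $\int_0^u t^{n-1}\,\d{t}=u^n/n$ to close the induction. No gaps.
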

\begin{proof}
We use induction. Clearly \eqref{Eq:Glconv} holds for $n=1$. Assume \eqref{Eq:Glconv} holds for some $n\in\N$. Then, by the Tonelli theorem and the Chapman--Kolmogorov identity \eqref{eq:ChKforp},
\begin{align*}
g_\lambda^{\ast (n+1)}(x) &= \int_{\R^d} g_\lambda^{\ast n}(x-y) g_\lambda(y)\d{y}\\
& = \int_{\R^d} \int_0^\infty \int_0^\infty \frac{t^{n-1}}{(n-1)!}e^{-\lambda t} p_t(x-y) e^{-\lambda s} p_s(y) \d{s} \d{t} \d{y}\\
& = \int_0^\infty \int_0^\infty \frac{t^{n-1}}{(n-1)!}e^{-\lambda (t+s)} p_{t+s}(x) \d{s} \d{t} \\
& = \int_0^\infty \int_t^\infty \frac{t^{n-1}}{(n-1)!}e^{-\lambda s} p_{s}(x) \d{s} \d{t} \\
& = \int_0^\infty \int_0^s \frac{t^{n-1}}{(n-1)!}e^{-\lambda s} p_{s}(x) \d{t} \d{s} 
= \int_0^\infty \frac{s^{n}}{n!}e^{-\lambda s} p_{s}(x)\d{s}.
\end{align*} 
\end{proof}

We are now in a position to make a concluding step in this section.

\begin{lemma}\label{lem:EFest}
Let \eqref{A1} hold and let $\kappa< \kappa^*$. If $\varphi \in L^2(\R^d)$ is such that \eqref{eq:eigenfunction} holds,
then for every $\eps \in (0,|E| \land 1)$ there is $R = R(\eps) \geq 1$ such that
\begin{align}\label{Eqcor:EFest}
	|\varphi(x)| \le \int_{|y| \le R} g_{|E|-\eps}(x-y) V(y) |\varphi(y)| \d{y}, \qquad x \in \Rd \setminus \left\{0\right\}. 
\end{align}
\end{lemma}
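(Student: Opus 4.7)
The plan is to iterate the inequality $|\varphi|\le G_{|E|}(V|\varphi|)$ from Lemma~\ref{lem:eigf_est}, splitting each integration into a ``near'' part over $\{|y|\le R\}$ and a ``far'' part over $\{|y|>R\}$, then summing the resulting series of convolutions via Lemma~\ref{lem:Glconv}. Write $\lambda = |E|$, and for non-negative measurable $f$ set
\[
T_1 f(x) = \int_{|y|\le R} g_\lambda(x-y) V(y) f(y)\,\d y,
\qquad
T_2 f(x) = \int_{|y|>R} g_\lambda(x-y) V(y) f(y)\,\d y.
\]
By Lemma~\ref{lem:eigf_est}, $|\varphi|\le T_1|\varphi| + T_2|\varphi|$ pointwise on $\Rdo$, and iterating $n$ times gives
\[
|\varphi|(x) \;\le\; \sum_{k=0}^{n-1} T_2^k T_1|\varphi|(x) \;+\; T_2^n|\varphi|(x).
\]

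On $\{|y|>R\}$ we have the crucial smallness $V(y)\le \kappa/R^\alpha$, so Tonelli's theorem (after dropping the domain restrictions on the intermediate variables) gives
\[
T_2^k T_1|\varphi|(x) \;\le\; \bigl(\kappa/R^\alpha\bigr)^{k} \int_{|y|\le R} g_\lambda^{*(k+1)}(x-y) V(y) |\varphi(y)|\,\d y.
\]
Applying Lemma~\ref{lem:Glconv} and summing a geometric series inside the $t$-integral,
\[
\sum_{k=0}^\infty \bigl(\kappa/R^\alpha\bigr)^{k} g_\lambda^{*(k+1)}(z) \;=\; \int_0^\infty e^{-(\lambda-\kappa/R^\alpha)\,t}\, p_t(z)\,\d t \;=\; g_{\lambda - \kappa/R^\alpha}(z),
\]
provided $\kappa/R^\alpha < \lambda$. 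Choosing $R=R(\eps) \ge \max\bigl(1, (\kappa/\eps)^{1/\alpha}\bigr)$ ensures $\kappa/R^\alpha \le \eps$, and since $\mu \mapsto g_\mu(z)$ is decreasing, the total sum is bounded by $g_{\lambda-\eps}(x-y)$, which matches the right-hand side of \eqref{Eqcor:EFest}.

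The main obstacle is showing that the remainder $T_2^n|\varphi|(x)$ vanishes as $n\to\infty$ for every $x\ne 0$. Iterating $T_2 f \le (\kappa/R^\alpha)\, G_\lambda f$ yields
\[
T_2^n|\varphi|(x) \;\le\; \bigl(\kappa/R^\alpha\bigr)^{n}\, G_\lambda^n|\varphi|(x),
\]
so it suffices to prove convergence of $\sum_{n\ge 1}\bigl(\kappa/R^\alpha\bigr)^{n} G_\lambda^n|\varphi|(x)$. By Lemma~\ref{lem:Glconv} and Tonelli again, this series equals $(\kappa/R^\alpha)\, G_{\lambda - \kappa/R^\alpha}|\varphi|(x)$, which is finite for $x\ne 0$ because $G_\mu|\varphi|(x) \le \mu^{-1}\sup_{t>0} P_t|\varphi|(x)$ and $\sup_{t>0} P_t|\varphi|(x) < \infty$ was established in the proof of Lemma~\ref{lem:eigf_est}. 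Convergence of the series forces its generic term, hence also $T_2^n|\varphi|(x)$, to zero; letting $n\to\infty$ in the splitting completes the proof.
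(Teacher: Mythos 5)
Your proposal is correct and follows essentially the same route as the paper: split the bound $|\varphi|\le G_{|E|}(V|\varphi|)$ from Lemma \ref{lem:eigf_est} into the parts over $\{|y|\le R\}$ and $\{|y|>R\}$ with $R=1\vee(\kappa/\eps)^{1/\alpha}$, iterate using the smallness $V\le\kappa R^{-\alpha}$ off the ball, and resum the convolutions $g_\lambda^{*k}$ via Lemma \ref{lem:Glconv} into $g_{|E|-\eps}$. The only (minor) deviation is the remainder term: the paper kills it through $\|\varphi\I_{\{|\cdot|>1\}}\|_\infty<\infty$ and the factor $(\eps/|E|)^n$, whereas you use convergence of $\sum_n(\kappa R^{-\alpha})^n G_{|E|}^n|\varphi|(x)=(\kappa R^{-\alpha})G_{|E|-\kappa R^{-\alpha}}|\varphi|(x)<\infty$, which rests on the same fact $\sup_{t>0}P_t|\varphi|(x)<\infty$ established in the proof of Lemma \ref{lem:eigf_est}; both arguments are valid.
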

\begin{proof}
As before, we denote $\lambda:=|E|=-E >0$. Let $\eps \in (0,\lambda \land 1)$. We first prove that there are $R \geq 1$ and $M \in (0,1)$ such that
\begin{align}\label{Eq:EFest}
	|\varphi(x)| \le \sum_{k=1}^n \eps^{k-1} \int_{|y| \le R} g_{\lambda}^{\ast k}(x-y) V(y) |\varphi(y)| \d{y} + M^n \|\varphi_1\|_\infty, \quad  x \in \Rd \setminus \left\{0\right\}, \ n \in \NN, 
\end{align}
where $\varphi_1(y):= \I_{\left\{|y|>1\right\}} \varphi(y).$ 
Clearly, $\|\varphi_1\|_\infty < \infty$ because of \eqref{eq:eigenfunction_representation} and the estimate in Lemma \ref{eq:smooth}.

We will use induction. By \eqref{Eq1:eigf_est} for every $R \geq 1$, we have
\begin{align}
|\varphi(x) &\le \int_{|y| \le R} g_\lambda(x-y) V(y) |\varphi(y)| \d{y} + \kappa R^{-\alpha} \int_{|y|>R} g_{\lambda}(x-y)|\varphi(y)| \d{y} \label{eq1:EFest}\\
&\le \int_{|y| \le R} g_\lambda(x-y) V(y) |\varphi(y)| \d{y} + \kappa R^{-\alpha} G_{\lambda}|\varphi_1|(x)\,. \notag
\end{align}
Put $R = 1 \vee \left(\frac{\kappa}{\eps}\right)^{1/\alpha}$ and $M = \frac{\eps}{\lambda} < 1$.
Then,
using the estimate $G_{\lambda}|\varphi_1|(x) \leq \|\varphi_1\|_\infty/\lambda$ and the inequality $\frac{\kappa}{R^\alpha} \leq \eps$, 
\begin{align*}
|\varphi(x) \le \int_{|y| \le R} g_\lambda(x-y) V(y) |\varphi(y)| \d{y} + M \|\varphi_1\|_\infty, \quad  x \in \Rd \setminus \left\{0\right\}.
\end{align*}
So \eqref{Eq:EFest} holds for $n=1$. Now, suppose that \eqref{Eq:EFest} holds for some $n\ge1$. Then, we can use \eqref{Eq:EFest} to estimate $|\varphi|$ under the second integral in \eqref{eq1:EFest}. By proceeding in that way, we get
\begin{align*}
|\varphi(x) &\le \int_{|y| \le R} g_\lambda(x-y) V(y) |\varphi(y)| \d{y} \\
& \ \ + \eps\int_{|y|>R} g_{\lambda}(x-y) \left(\sum_{k=1}^n \eps^{k-1} \int_{|z| \le R} g_{\lambda}^{\ast k}(y-z) V(z) |\varphi(z)| \d{z} + M^n \|\varphi\|_\infty\right) \d{y}\\
&\le \int_{|y| \le R} g_\lambda(x-y) V(y) |\varphi(y)| \d{y} \\
& \ \ + \sum_{k=1}^n \eps^{k} \int_{|z| \le R} g_{\lambda}^{\ast (k+1)}(x-z) V(z) |\varphi(z)| \d{z} + M^{n+1} \|\varphi\|_\infty\\
&= \sum_{k=1}^{n+1} \eps^{k-1} \int_{|z| \le R} g_{\lambda}^{\ast k}(x-z) V(z) |\varphi(z)| \d{z} + M^{n+1} \|\varphi\|_\infty.
\end{align*}
Hence, by induction, \eqref{Eq:EFest} holds for all $n \ge 1$ and $x \in \Rd \setminus \left\{0\right\}$.

Lemma \ref{lem:Glconv} and Tonelli's theorem lead us to a concluding estimate
\begin{align*}
|\varphi(x)| &\le \sum_{k=1}^n \eps^{k-1} \int_{|y| \le R} \int_0^\infty \frac{t^{k-1}}{(k-1)!}e^{-\lambda t} p_t(x-y)  V(y) |\varphi(y)| \d{t}\d{y} + M^n \|\varphi\|_\infty \\
&\le \int_{|y| \le R} \int_0^\infty \sum_{k=1}^\infty \frac{(\eps t)^{k-1}}{(k-1)!}e^{-\lambda t} p_t(x-y)  V(y) |\varphi(y)| \d{t}\d{y} + M^n \|\varphi\|_\infty \\
&= \int_{|y| \le R}  g_{\lambda-\eps}(x-y)  V(y) |\varphi(y)| \d{y} + M^n \|\varphi\|_\infty.
\end{align*}
By letting $n \to \infty$, we obtain the claimed bound \eqref{Eqcor:EFest}. 
\end{proof}

The following theorem summarizes our investigations in this section. It leads to the estimates for eigenfunctions at infinity which are given in terms of the resolvent kernels of the operator $L$. 
 
\begin{theorem}\label{th:EFest}
Let \eqref{A1} hold and let $\kappa< \kappa^*$. If $\varphi \in L^2(\R^d)$ is such that \eqref{eq:eigenfunction} holds,
then for every $\eps \in (0,|E| \land 1)$ there is $R=R(\eps) \geq 1$ and $c=c(\eps)$ such that
\begin{align}\label{eqth:EFest}
	|\varphi(x)| \le c \sup_{|y| \le R} g_{|E|-\eps}(x-y), \qquad x \in \Rd \setminus \left\{0\right\}. 
\end{align}
Furthermore, if $\varphi$ is a ground state (i.e.\ $E = \inf \spec(H) <0$), then there is $\widetilde c>0$ such that
\begin{align}\label{eqth:EFest}
	\varphi(x) \ge \widetilde c \inf_{|y| \le 1} g_{|E|}(x-y), \qquad x \in \Rd \setminus \left\{0\right\}. 
\end{align}
\end{theorem}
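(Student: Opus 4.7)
The upper estimate follows by extracting the worst case from Lemma \ref{lem:EFest}. Starting from the bound
\[
|\varphi(x)| \le \int_{|y| \le R} g_{|E|-\eps}(x-y) V(y) |\varphi(y)| \d{y},
\]
I would pull the supremum outside and obtain
\[
|\varphi(x)| \le \left(\sup_{|y| \le R} g_{|E|-\eps}(x-y)\right) \int_{|y| \le R} V(y) |\varphi(y)| \d{y}.
\]
The remaining task is to verify that the integral is finite, with a value that can be taken as the constant $c=c(\eps)$. Using Proposition \ref{lem:est_at_zero}, which yields $|\varphi(y)| \le C(1+|y|^{-\delta})$, we see that near $0$ the integrand is controlled by $|y|^{-\alpha-\delta}$; since $\delta \le (d-\alpha)/2$, we have $\alpha+\delta \le (d+\alpha)/2 < d$ so this is locally integrable. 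On $\{|y|\le R\}\setminus\{0\}$ the factor $V|\varphi|$ is also controlled by the continuity of $\varphi$ (Proposition \ref{lem:est_at_zero}). This finishes the upper estimate.

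For the lower estimate on a ground state, the first step is to ensure that (up to sign) $\varphi$ may be taken pointwise non-negative. This follows from a Perron--Frobenius-type argument: the kernel $\tp(t,x,y)$ is strictly positive on $(\R^d\setminus\{0\})^2$ by the series representation \eqref{eq:pert_series} and positivity of $p$, so the semigroup $\{\tP_t\}_{t>0} = \{e^{-tH}\}_{t>0}$ is positivity improving on $\R^d\setminus\{0\}$; hence the eigenspace at $\inf\spec(H)$ is spanned by a function of constant sign, which I choose to be non-negative. Then $\varphi\ge 0$ is continuous on $\R^d\setminus\{0\}$ (Proposition \ref{lem:est_at_zero}) and strictly positive there, since the equality $\varphi = e^{|E|t}\tP_t\varphi$ combined with $\tp(t,x,y)>0$ forbids zeros outside the origin.

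With $\varphi\ge 0$, Lemma \ref{lem:eigf_est} gives the identity
\[
\varphi(x) = G_{|E|}(V\varphi)(x) = \int_{\R^d} g_{|E|}(x-y) V(y) \varphi(y) \d{y},
\]
which I restrict to the ball $\{|y|\le 1\}$ and then pull the infimum of $g_{|E|}(x-y)$ outside:
\[
\varphi(x) \ge \left(\inf_{|y|\le 1} g_{|E|}(x-y)\right) \int_{|y|\le 1} V(y)\varphi(y)\d{y}.
\]
It remains to show that $\widetilde{c} := \int_{|y|\le 1} V(y)\varphi(y)\d{y}$ is strictly positive and finite. Finiteness is the same integrability argument as above (using $\alpha+\delta<d$ together with Proposition \ref{lem:est_at_zero}); strict positivity is immediate from continuity and strict positivity of $\varphi$ on $\R^d\setminus\{0\}$ together with $V>0$.

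The only delicate point in this plan is justifying that the ground state can be chosen non-negative (and is strictly positive away from $0$); everything else is a clean application of Lemmas \ref{lem:eigf_est} and \ref{lem:EFest} combined with the a priori estimate of Proposition \ref{lem:est_at_zero}. If the uniqueness/positivity of the ground state has already been stated or is standard in the authors' setting, this step amounts to citing it; otherwise it needs the short Perron--Frobenius argument sketched above, using the strict positivity of $\tp$ on $(\R^d\setminus\{0\})^2$ inherited from the perturbation series.
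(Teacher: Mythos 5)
Your proposal is correct and follows essentially the same route as the paper: the paper's proof simply invokes Lemma \ref{lem:EFest} for the upper bound and the identity $\varphi = G_{|E|}(V\varphi)$ from Lemma \ref{lem:eigf_est} for the lower bound, exactly as you do, with the finiteness of $\int_{|y|\le R}V|\varphi|$ coming from Proposition \ref{lem:est_at_zero} and $\alpha+\delta<d$. Your Perron--Frobenius discussion supplies a detail the paper leaves implicit (it tacitly takes the ground state non-negative and strictly positive away from the origin, as is standard for positivity-improving semigroups), but this does not change the argument.
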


\begin{proof}
The upper bound holds by Lemma \ref{lem:EFest} and the lower estimate follows from the second assertion of Lemma \ref{lem:eigf_est}.
\end{proof} 

\section{Estimates of heat kernels}

Throughout this section we assume that $\kappa \leq \kappa^*$, i.e.\ we allow for critical potential $V(x) = \kappa^*|x|^{-\alpha}$. Recall that $\nu(dx) = \nu(x)dx$ is a symmetric L\'evy measure such that the assumption \eqref{A1} holds. The heat kernel of the operator $L$ is given by $p(t,x,y):= p_t(y-x)$, where $p_t$ is a probability density function given by \eqref{eq:free_density}. Hence, for every $t>0$, $p_t$ is a symmetric, bounded and continuous function such that 
\[
\int_{\R^d} e^{i \xi \cdot y} p_t(y) \d y= e^{-t \psi(\xi)}, \quad t>0, \ \xi \in \R^d,
\]
where
\[
\psi(\xi) = \int_{\R^d \setminus\{0\}} (1-\cos(\xi \cdot y) \nu(dy), \quad \xi \in \R^d.
\]

\subsection{Upper estimates of the perturbed heat kernel}
In this section we prove the upper estimates for the kernel $\tp(t,x,y)$ which was constructed in Section \ref{sec:pert_kernel}. This will be done for L\'evy measures with densities that satisfy our both assumptions \eqref{A1} and \eqref{A2}. 

Let
\[
\psi^*(u):= \sup_{|\xi| \leq u} \psi(\xi), \quad u \geq 0,
\] 
be a maximal function of $\psi$.
Recall that $\psi(\xi) \asymp |\xi|^{\alpha}$, for $|\xi| \geq (2|\sigma|)^{1/\alpha}$, see \eqref{eq:LK_comp}. Since
\begin{align} \label{eq:maximal}
\psi(\xi) \asymp \psi^*(|\xi|), \quad \xi \in \R^d,
\end{align}
(see \cite[Lemma 5(a)]{KK-PS-2017}), it extends to
\begin{align} \label{eq:psi_stable}
\psi(\xi) \asymp |\xi|^{\alpha}, \quad |\xi| \geq r,
\end{align} 
for every $r>0$, with comparability constant depending on $r$. An important consequence of \eqref{eq:psi_stable} is that for every $r>0$ there is a constant $c>0$ such that
\begin{align} \label{eq:nu_by_psi}
\nu(x) \geq c \frac{\psi^*(1/|x|)}{|x|^d}, \quad |x| \leq r.
\end{align} 
This was originally established in \cite[Theorem 26]{KB-TG-MR-2014} for radial decreasing densities $\nu(x)$, but due to \eqref{eq:profile} it easily extends to our setting, see \cite[Lemma 5(b)]{KK-PS-2017} or \cite[Lemat 2.3]{TG-KS-1}, \cite[Lemat A.3]{TG-KS-2}. 

We first collect the properties of $\nu(x)$ and $p(t,x,y)$ that are needed below.

	\begin{lemma} \label{lem:properties_pt_nu} 
		Under assumptions \textup{\eqref{A1}} and \textup{\eqref{A2}}, we have the following statements.
		\begin{itemize}
		  \item[(a)]  For every $r>0$ there exists a constant $c>0$ such that 
		  $$
			c \nua(x) \leq \nur(x) \leq \nua(x), \quad |x| \leq r.
			$$
			\item[(b)] For every $r>0$ there exists a constant $c>0$ such that
			\begin{align}\label{eq:loc_comp}
			\nur(y) \leq c \nu(x), \quad |y| \geq 1, \ |y-x| \leq r. 
			\end{align}
			\item[(c)]  There exists a constant $c>0$ such that
			$$
			\nur(y) \nur(x) \leq c \nur(y-x), \quad |x|, |y| \geq 1.
			$$
			\item[(d)] For every $T, K>0$ there exist the constants $c, \widetilde c$ (depending on $T$ and $K$) such that
			\[
			p(t,x,y) \stackrel{c}{\asymp} t^{-d/\alpha}, \quad t \in (0,T], \ |y-x| \leq Kt^{1/\alpha},
			\]
			and
			\[
			p(t,x,y) \stackrel{\widetilde c}{\asymp} t \nu(y-x), \quad t \in (0,T], \  |y-x| \geq Kt^{1/\alpha}.
			\]
			In particular,
	$$
			p(t,x,y) \asymp t^{-d/\alpha} \wedge t \nu(y-x), \quad t \in (0,T], \ x,y \in \R^d.
			$$
			\item[(e)] For every $T, K>0$ there exists a constant $c>0$ (depending on $T$ and $K$) such that
			\[
			p(t,z,y) \geq c p(t,x,y), \quad x,y,z \in \R^d,\ |x-z| \leq K t^{1/\alpha}, \ t \in (0,T]. 
			\]
		\end{itemize}
	\end{lemma}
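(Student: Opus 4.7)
The plan is to establish (a)–(e) sequentially, using the Fourier-analytic consequences \eqref{eq:psi_stable}, \eqref{eq:maximal}, \eqref{eq:nu_by_psi} of \eqref{A1}–\eqref{A2} together with the finite-time heat-kernel estimates of \cite{KK-PS-2017}.

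For (a), the upper bound $\nu(x) \le \nu^{(\alpha)}(x)$ is immediate from \eqref{A1} since $\sigma \ge 0$. For the matching lower bound when $|x| \le r$, I would combine \eqref{eq:nu_by_psi} with \eqref{eq:psi_stable}: since $1/|x| \ge 1/r$, we have $\psi^*(1/|x|) \asymp |x|^{-\alpha}$, so $\nu(x) \ge c\,\psi^*(1/|x|)/|x|^d \asymp |x|^{-d-\alpha} \asymp \nu^{(\alpha)}(x)$. For (b) and (c), I would pass to the profile $f$ via \eqref{eq:profile}. Under \eqref{A2} the profile is monotone, and the regularity \eqref{eq:DJP} implies (see \cite[Proposition~2]{KK-PS-2017} and \cite[Lemma~3.1]{KK-RS-2020}) a local doubling property of $f$ on intervals bounded away from zero: $f(s_1) \asymp f(s_2)$ whenever $s_1 \asymp s_2$ with $s_1 \wedge s_2 \ge 1$. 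This directly yields (b), since for $|y| \ge 1$ and $|y-x| \le r$ both $|y|$ and $|x|$ are comparable and bounded below. For (c), the monotonicity of $f$ combined with \eqref{eq:DJP} gives the subconvolution-type estimate $f(|x|)f(|y|) \le c\,f(|y-x|)$ for $|x|,|y| \ge 1$, by separating the cases $|y-x| \lesssim |x| \vee |y|$ and $|y-x| \gtrsim |x| \vee |y|$ and applying monotonicity in each.

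For (d), the two-sided bound $p_t(x) \asymp t^{-d/\alpha} \wedge t\,\nu(x)$ on $t \in (0,T]$ is the main conclusion of \cite{KK-PS-2017} under \eqref{A1}–\eqref{A2}. The near-diagonal comparability on $|y-x| \le K t^{1/\alpha}$ follows by Fourier inversion using \eqref{eq:psi_stable} and \eqref{eq:maximal}, while the off-diagonal bound on $|y-x| \ge K t^{1/\alpha}$ uses a direct comparison exploiting the semigroup structure and \eqref{eq:DJP}. The restriction to $t \in (0,T]$ is what allows one to absorb the bounded additive perturbation of $\psi$ by $|\sigma|$ into the multiplicative constants (via $e^{t|\sigma|} \le e^{T|\sigma|}$). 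For (e), I would split on the size of $|y-x|$: if $|y-x| \le 2K t^{1/\alpha}$, then $|y-z| \le 3K t^{1/\alpha}$ and both $p(t,z,y)$ and $p(t,x,y)$ are $\asymp t^{-d/\alpha}$ by (d); otherwise $|y-z|$ is within a factor $2$ of $|y-x|$ and both lie in the off-diagonal range, so (d) reduces the claim to comparing $\nu(y-z)$ and $\nu(y-x)$, which is handled by (b) when $|y-x| \ge 1$ and by the local doubling of $f$ on bounded intervals away from zero otherwise.

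The main technical point is (d): extracting the two-sided finite-time bound from \cite{KK-PS-2017} with constants uniform on $(0,T]$, together with verifying the local doubling property of $f$ needed for (b) and (e) from \eqref{eq:DJP}. Once these are secured, the remaining assertions reduce to short case analyses as sketched above.
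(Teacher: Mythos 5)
Parts (a), (d) and (e) of your proposal are essentially sound. Part (a) is the paper's argument verbatim; for (d) you, like the paper, defer to \cite[Theorem 1]{KK-PS-2017} (the paper additionally explains why the threshold constant and the time horizon in that theorem can be taken arbitrary, a point you gloss over); and your case analysis for (e) is a legitimate self-contained alternative to the paper's citation of \cite[Corollary 4.2]{TG-KK-PS-2022} -- it works because in the off-diagonal case $|x-z|\le Kt^{1/\alpha}\le|y-x|/2$, so $|y-z|$ and $|y-x|$ differ multiplicatively by a factor in $[1/2,3/2]$ and additively by at most $KT^{1/\alpha}$, and one can then invoke (a) for $|y-x|<1$ and (b) for $|y-x|\ge1$.

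The genuine gap is in (b) and (c). The ``local doubling property'' you invoke, namely $f(s_1)\asymp f(s_2)$ whenever $s_1\asymp s_2$ and $s_1\wedge s_2\ge1$, is \emph{false} under \eqref{A1}--\eqref{A2}: the relativistic stable density of Example \ref{ex:example_nu}(a) has profile $f(s)\asymp e^{-m^{1/\alpha}s}s^{-(d+\alpha+1)/2}$ for $s\ge1$, which satisfies \eqref{eq:DJP} while $f(s)/f(2s)\to\infty$. What \eqref{eq:DJP} actually yields, and what (b) requires, is stability under bounded \emph{additive} shifts: $f_1(u)\le c\,f_1(u+r)$ for $u\ge1$. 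The paper derives this directly by taking $x=(u+r,0,\dots,0)$ and restricting the convolution integral in \eqref{eq:DJP} to $\{|y-x|<r\}$, which gives $c_1f_1(u+r)\ge f_1(u)\int_{|y|<r}f_1(|y|)\,\d y$; you would also need to dispose of the case $|x|<1$ (possible when $r\ge1$), which is trivial by monotonicity of $f$. The same confusion undermines your sketch of (c): in the case $|y-x|\gtrsim|x|\vee|y|$, monotonicity gives $f(|y-x|)\le f(|x|\vee|y|)$, which is the wrong direction, so your two-case argument does not close. The inequality $f_1(|x|)f_1(|y|)\le c\,f_1(|x-y|)$ for $|x|,|y|\ge1$ is a genuinely quantitative consequence of \eqref{eq:DJP} (a sub-convolution property of the profile), for which the paper cites \cite[Lemma 3.1]{KK-RS-2020}; it cannot be obtained from monotonicity alone. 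Once the additive-shift lemma and the sub-convolution inequality are in place, the rest of your argument goes through.
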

	\begin{proof} (a) The upper bound is a direct consequence of \eqref{A1}. The lower bound follows directly from \eqref{eq:nu_by_psi}, \eqref{eq:maximal} and \eqref{eq:psi_stable}. 
	
\smallskip	
\noindent
 (b) Due to \eqref{eq:profile} the assertion is trivial for $|x| < 1$. We are left to consider $|x| \geq 1$. It follows from a combination of \cite[Lemmas 1 and 3]{KK-PS-2017}, but we provide here a short and direct proof for reader's convenience. First recall that by Assumption \eqref{A2} there exists a decreasing profile $f$ of the density $\nu$ such that
\begin{align}\label{eq:DJP_aux}
\int_{\R^d} f_1(|x-y|)f_1(|y|) \d y \leq c_1 f_1(|x|), \quad x \in \R^d, 
\end{align}
for a constant $c_1$. Here $f_1 = f  \wedge 1$. Moreover, observe that
\[
c_2 f(|x|) \leq f_1(|x|) \leq f(|x|), \quad |x| \geq 1,
\]
where $c_2 = 1/(1 \vee f(1))$. Therefore, in order to complete the proof of \eqref{eq:loc_comp}, we only need to show that for any $r>0$ there exists a constant $c_3=c_3(r)>0$ such that 
\[
f_1(u) \leq c_3 f_1(u+r), \quad u \geq 1.
\]
Fix $r>0$. Let $x = (u+r,0,\ldots,0)$, $u \geq 1$. By \eqref{eq:DJP_aux} we have
\[ 
c_1 f_1(u+r) = c_1 f_1(|x|) \geq \int_{|y-x|<r} f_1(|x-y|)f_1(|y|) \d y \geq  f_1(|x|-r) \int_{|y|<r} f_1(|y|)\d y.
\]
Since $|x|-r = u$ and $\int_{|y|<r} f_1(|y|)\d y>0$, this completes the proof of part (b).

\smallskip	
\noindent	
(c) This easily follows from the last assertion of \cite[Lemma 3.1]{KK-RS-2020}.

\smallskip	
\noindent	
(d) By the first assertion of \cite[Lemma 3.1]{KK-RS-2020}, \eqref{eq:DJP_aux} is equivalent to the condition that for every $r>0$ there is a constant $c_4 = c_4(r)>0$ such that
\[
\int_{|y-x|>r \atop |y|>r} \nu(x-y)\nu(y) \d y \leq c_4 \nu(x), \quad |x| \geq r
\]
(this is stated for $r=1$, but the proof for arbitrary $r$ is the same -- this is a consequence of radiality and monotonicity of the profile $f$). This inequality and \eqref{eq:nu_by_psi} shows that the assumptions (1.1) of \cite[Theorem 1]{KK-PS-2017} are satisified for every $r_0>0$ (according to the notation in the quoted paper).
This theorem gives that for every $T>0$ there exist constants $c_5, c_6$ (depending on $T$) and $\theta>0$ such that
\[
p(t,x,y) \stackrel{c_5}{\asymp} t^{-d/\alpha}, \quad t \in (0,T], \ |y-x| \leq \theta t^{1/\alpha},
\]
and
\[
p(t,x,y) \stackrel{c_6}{\asymp} t \nu(y-x), \quad t \in (0,T], \ |y-x| \geq \theta t^{1/\alpha}.
\]
By part (a), for any fixed $r>0$ (in particular, $r = K T^{1/\alpha}$ for any $K>0$) we have $\nu(x) \asymp |x|^{-d-\alpha}$, $|x| \leq r$ . 
Hence, we see that 
	\[
			p(t,x,y) \asymp t^{-d/\alpha} \wedge t \nu(y-x), \quad t \in (0,T], \ x, y \in \R^d,
	\]
and the similar comparabilities hold with $\theta = K$, for any $K>0$.

	We remark that the original statement of \cite[Theorem 1]{KK-PS-2017} says that we have the estimates for \emph{some $T>0$}. However, in our case the assumptions are satisfied for every $r_0>0$ and therefore the estimates hold \emph{for every $T>0$}. This can be directly checked by inspecting the proof of that theorem.  
	
\smallskip	
\noindent	
(e) This bound can be obtained e.g.\ by iterating the estimate in \cite[Corollary 4.2]{TG-KK-PS-2022} (with $t_0=T$).
	\end{proof}

\noindent For $t>0$ and $x \in \Rdo$ we denote
\begin{align}
	H(t,x) = 1+ (t^{-1/\alpha}|x|)^{-\delta}.
\end{align}
It follows from \cite[Proposition 3.2]{KB-TG-TJ-DP-2019} and \eqref{eq:est_by_stable_hardy} that there exists a constant $c>0$ such that
\begin{align} \label{eq:upper_est_by_H}
	\int_{\R^d}\tp(t,x,y) \d{y} \leq ce^{|\sigma|t} H(t,x), \quad t>0, \ \ x \in \R^d \setminus \left\{0\right\}.
\end{align}
\begin{lemma} \label{lem:small_args}
For every $T>0$ and $R>0$ there is a constant $c > 0$ such that 
\begin{align}
	\tp(t,x,y) \le c\, H(t,x) H(t,y) p(t,x,y) , \qquad |x|, |y| \leq R, \ t \in (0,T].
\end{align}
\end{lemma}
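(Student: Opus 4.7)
The plan is to combine the two inputs already available: the domination \eqref{eq:est_by_stable_hardy} of $\tp$ by $\tpa$, the sharp estimate \eqref{eq:tilde_pa_est} of $\tpa$, and the heat-kernel comparability from Lemma~\ref{lem:properties_pt_nu}(d). On the bounded spatial region $|x|,|y|\le R$ the increment $|y-x|$ is bounded by $2R$, so the jump densities $\nu$ and $\nua$ are comparable there, which should force $p(t,x,y)$ and $\pa(t,x,y)$ to be comparable as well, with constants depending only on $T$ and $R$. The bound then follows by chaining these comparabilities.

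More precisely, first I would apply \eqref{eq:est_by_stable_hardy} to get
\[
\tp(t,x,y) \le e^{|\sigma|t}\tpa(t,x,y), \quad t \in (0,T],
\]
and then invoke \eqref{eq:tilde_pa_est} to obtain a constant $c_1=c_1(T)$ such that
\[
\tpa(t,x,y) \le c_1 H(t,x) H(t,y)\left(t^{-d/\alpha}\wedge \frac{t}{|y-x|^{d+\alpha}}\right),
\]
for $x,y\in\Rdo$ and $t\in(0,T]$. Next I would reduce the last factor to $p(t,x,y)$ on the restricted region. Since $|x|,|y|\le R$ gives $|y-x|\le 2R$, Lemma~\ref{lem:properties_pt_nu}(a) yields $\nua(y-x)\le c_2\nu(y-x)$ with $c_2=c_2(R)$. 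Combined with \eqref{eq:oppt} this gives
\[
t^{-d/\alpha}\wedge\frac{t}{|y-x|^{d+\alpha}} \asymp \pa(t,x,y) \le c_3\left(t^{-d/\alpha}\wedge t\nu(y-x)\right), \quad t\in(0,T],\ |x|,|y|\le R,
\]
and Lemma~\ref{lem:properties_pt_nu}(d) then identifies the right-hand side with a constant multiple of $p(t,x,y)$, with all constants depending only on $T$ and $R$.

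Collecting these inequalities, for $|x|,|y|\le R$ and $t\in(0,T]$,
\[
\tp(t,x,y)\le e^{|\sigma|T} c_1 c_3 c_4\, H(t,x)H(t,y)\,p(t,x,y),
\]
where $c_4$ is the comparability constant from Lemma~\ref{lem:properties_pt_nu}(d). Setting $c:=e^{|\sigma|T}c_1 c_3 c_4$ yields the claim.

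There is no real obstacle here: the statement is essentially a transfer of the known sharp upper bound \eqref{eq:tilde_pa_est} for the stable-Hardy kernel to the kernel $\tp$, exploiting that on a bounded spatial region the jump kernels $\nu$ and $\nua$ differ only by a bounded factor. The only mild point to watch is that the comparability constants in Lemma~\ref{lem:properties_pt_nu} depend on $T$ and on the diameter $2R$ of the region, which is why the final constant $c$ is allowed to depend on both $T$ and $R$.
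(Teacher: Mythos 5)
Your proof is correct and follows essentially the same route as the paper: dominate $\tp$ by $e^{|\sigma|t}\tpa$ via \eqref{eq:est_by_stable_hardy}, apply the sharp bound \eqref{eq:tilde_pa_est}, use Lemma~\ref{lem:properties_pt_nu}(a) on the bounded region $|y-x|\le 2R$ to replace $|y-x|^{-d-\alpha}$ by $\nu(y-x)$, and conclude with Lemma~\ref{lem:properties_pt_nu}(d). The intermediate comparison with $\pa(t,x,y)$ via \eqref{eq:oppt} is unnecessary but harmless.
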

\begin{proof}
Fix $T, R >0$. By \eqref{eq:est_by_stable_hardy} and \eqref{eq:tilde_pa_est}, for $t \in (0,T]$ and $x, y \in \R^d$, we have
\begin{align*}
\tp(t,x,y) \le c_1 \tpa(t,x,y) &\le c_2 H(t,x) H(t,y) \left(t^{-d/\alpha} \land \frac{t}{|x-y|^{d+\alpha}}\right) .
\end{align*}
Since $|x-y|\leq |x|+|y|\leq 2R$, Lemma \ref{lem:properties_pt_nu} (a) implies that there is $c_3=c_3(R)$ such that
\begin{align*}
t^{-d/\alpha} \land \frac{t}{|x-y|^{d+\alpha}} \leq c_3 \left(t^{-d/\alpha} \land t \nu(x-y) \right).
\end{align*}
Together with Lemma \ref{lem:properties_pt_nu} (d), this gives the claimed bound.
\end{proof}

\medskip

Observe now that for any $t>0$, $x,y \in\R^d$ and $R>0$, by \eqref{eq:perturbation}, we have
\begin{align*}
\tp(t,x,y) &\le p(t,x,y) + \int_0^t \int_{|z|<R} \tp(t-s,x,z) V(z) p(s,z,y) \d{z}\d{s} \\
& \ \ \ \ + \int_0^t \int_{|z|>R} \tp(t-s,x,z) \kappa R^{-\alpha} p(s,z,y) \d{z}\d{s} \\
&\le p(t,x,y) + \int_0^t \int_{|z|<R} \tp(t-s,x,z) V(z) p(s,z,y) \d{z}\d{s}  + t\kappa R^{-\alpha} \tp(t,x,y).\\
\end{align*}
Hence, if $t\kappa R^{-\alpha} < 1$, then
\begin{align*}
\tp(t,x,y) &\le \frac{1}{1- t\kappa R^{-\alpha}} \left( p(t,x,y) + \int_0^t \int_{|z|<R} \tp(t-s,x,z) V(z) p(s,z,y) \d{z}\d{s} \right).
\end{align*}
For any fixed $T>0$ we set
\begin{align} \label{eq:R0_def}
R_0 = R_0(T) := 1 \vee (2T\kappa)^{1/\alpha}.
\end{align}
Then, 
\[
\frac{1}{1- t\kappa R_0^{-\alpha}}<2, \quad t \in (0,T],
\]
and, consequently, 
\begin{align}\label{eq:Duhamelcut}
\tp(t,x,y) &\le 2 \left( p(t,x,y) + \int_0^t \int_{|z|<R_0} \tp(t-s,x,z) V(z) p(s,z,y) \d{z}\d{s} \right).
\end{align}

\begin{lemma}\label{lem:onelarge}
Let $T>0$ and let $R_0=R_0(T)$ be a number given by \eqref{eq:R0_def}. Then there is a constant $c=c(T)$ such that for any $|x|\le R_0+1$, $|y|\ge R_0+2$ and $t \in (0,T]$ we have 
\begin{align}\label{Eq:onelarge}
	\tp(t,x,y) \le cH(t,x) t \nur(y-x) .
\end{align}
\end{lemma}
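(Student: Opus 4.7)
The plan is to apply the Duhamel-type bound \eqref{eq:Duhamelcut} and treat the two resulting terms separately. Under the standing hypothesis $|x|\le R_0+1$ and $|y|\ge R_0+2$ we have $|y-x|\ge 1$, so by Lemma \ref{lem:properties_pt_nu}(d) the free term satisfies $p(t,x,y)\le c\bigl(t^{-d/\alpha}\wedge t\nu(y-x)\bigr)\le c\,t\,\nu(y-x)$ for $t\in(0,T]$; since $H(t,x)\ge 1$, this already matches the target \eqref{Eq:onelarge}.

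The main work will be controlling the Hardy contribution
\[
I:=\int_0^t\int_{|z|<R_0}\tp(t-s,x,z)V(z)p(s,z,y)\,\d z\,\d s,
\]
and the key idea is to pull the factor $\nu(y-x)$ out of the double integral. For $|z|<R_0$ and $|y|\ge R_0+2$ we have $|y-z|\ge 2$, so a second application of Lemma \ref{lem:properties_pt_nu}(d) gives $p(s,z,y)\le c\,s\,\nu(y-z)$ for $s\in(0,T]$. Moreover $|(y-z)-(y-x)|=|x-z|\le 2R_0+1$, hence Lemma \ref{lem:properties_pt_nu}(b) applied with $r=2R_0+1$ (note that $|y-z|\ge 1$) yields $\nu(y-z)\le c\,\nu(y-x)$. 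Using $s\le t$ in the resulting estimate, one obtains
\[
I\le c\,t\,\nu(y-x)\int_0^t\int_{|z|<R_0}\tp(t-s,x,z)V(z)\,\d z\,\d s.
\]

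To close the argument I would bound the remaining double integral by $c(T)\,H(t,x)$, and this is where the semigroup structure enters. Enlarging the spatial domain to $\R^d$ (using $\tp\ge 0$ and $V\ge 0$) and substituting $u=t-s$, it suffices to control $\int_0^t\!\int_{\R^d}\tp(u,x,z)V(z)\,\d z\,\d u$. Integrating the perturbation formula \eqref{eq:perturbation} in the variable $y$ and invoking $\int_{\R^d} p(t-s,z,y)\,\d y=1$ gives the identity
\[
\int_0^t\int_{\R^d}\tp(u,x,z)V(z)\,\d z\,\d u=\int_{\R^d}\tp(t,x,y)\,\d y-1,
\]
which by \eqref{eq:upper_est_by_H} is at most $c\,e^{|\sigma|T}H(t,x)$. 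Combining the pieces yields $I\le c(T)\,H(t,x)\,t\,\nu(y-x)$, and together with the free-term estimate and the prefactor $2$ from \eqref{eq:Duhamelcut} this gives \eqref{Eq:onelarge}. The only delicate point will be the transfer from $\nu(y-z)$ to $\nu(y-x)$ via Lemma \ref{lem:properties_pt_nu}(b); the remaining manipulations are essentially book-keeping.
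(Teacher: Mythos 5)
Your argument is correct and follows essentially the same route as the paper: apply \eqref{eq:Duhamelcut}, transfer $\nu(y-z)$ to $\nu(y-x)$ via Lemma \ref{lem:properties_pt_nu}(b) and (d), and bound the remaining mass $\int_0^t\int_{\R^d}\tp(u,x,z)V(z)\,\d z\,\d u$ by the integrated perturbation formula together with \eqref{eq:upper_est_by_H}. The only cosmetic difference is the order in which you combine parts (b) and (d); the substance is identical.
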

\begin{proof}
First observe that if $|z|<R_0$, then $|y-z| >1$ and $|(y-x) - (y-z)| = |x-z| < 2R_0+1$. Hence, by using Lemma \ref{lem:properties_pt_nu} (b), we get $\nur(y-z) \le c_1 \nu(y-x)$, for a positive constant $c_1$. Together with the estimate in Lemma \ref{lem:properties_pt_nu} (d), this implies that 
\begin{align}\label{eq:ps_aux}
p(s,z,y) \le c_2 s \nu(y-x), \quad |z| < R_0, \ s \in (0,T],
\end{align}
for some $c_2=c_2(T)$.
By \eqref{eq:Duhamelcut} and \eqref{eq:ps_aux}, we get
\begin{align*}
\tp(t,x,y) &\le 2p(t,x,y) + 2c_2 t\nur(y-x)\int_0^t \int_{|z|<R_0} \tp(t-s,x,z)V(z)\d{z} \d{s}\\
           &\le 2p(t,x,y) + 2c_2 t\nur(y-x)\int_0^t \int_{\R^d} \tp(t-s,x,z)V(z)\d{z} \d{s}.
\end{align*}
On the other hand, by integrating on both sides of the equality \eqref{eq:perturbation}, we obtain
\[
\int_{\R^d} \tp(t,x,y) \d{y} = 1 + \int_0^t \int_{\R^d} \tp(t-s,x,z) V(z) \d{z} \d{s},
\]
and further, by using \eqref{eq:upper_est_by_H}, we get
\[
\int_0^t \int_{\R^d} \tp(t-s,x,z) V(z) \d{z} \d{s} \leq c_3 H(t,x),
\]
for a constant $c_3=c_3(T)$. This estimate, Lemma \ref{lem:properties_pt_nu} (d) and the inequality $H(t,x) \geq 1$ lead us to a conclusion 
\begin{align*}
\tp(t,x,y) &\le 2p(t,x,y) + 2c_2c_3t\nur(y-x) H(t,x) \leq c_4 H(t,x) t \nur(y-x).
\end{align*}
\end{proof}

We are now in a position to make a concluding step in this section.

\begin{lemma} \label{lem:final_step}
Let $T>0$ and let $R_0=R_0(T)$ be a number given by \eqref{eq:R0_def}. Then there is a constant $c=c(T)$ such that for any $x, y \in \R^d$ satisfying the condition $|x| \vee |y| \ge R_0+2$ and $t \in (0,T]$ we have
\begin{align}\label{Eq:xylarge}
	\tp(t,x,y) \le c H(t,x)H(t,y) t \nur(y-x) .
\end{align} 
\end{lemma}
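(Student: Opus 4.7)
The plan is to use the symmetry of $\tp$ to reduce to configurations where Lemma~\ref{lem:onelarge} becomes applicable under the integral in the perturbation formula \eqref{eq:Duhamelcut}. By symmetry of $\tp$ we may assume $|y|\ge R_0+2$ and split on the size of $|x|$.

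\emph{Easy subcase.} If $|x|\le R_0+1$, then Lemma~\ref{lem:onelarge} applies directly and gives $\tp(t,x,y)\le c_1 H(t,x)\,t\,\nu(y-x)$; since $H(t,y)\ge 1$, this immediately yields the claim.

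\emph{Main subcase.} Suppose $|x|>R_0+1$. Here I would rewrite~\eqref{eq:Duhamelcut} with the roles of $x$ and $y$ interchanged (which is legal by symmetry of $\tp$) to obtain
\[
\tp(t,x,y)\le 2p(t,x,y)+2\int_0^t\!\!\int_{|z|<R_0}\tp(t-s,z,y)\,V(z)\,p(s,z,x)\,\d z\,\d s.
\]
The crucial observation is that under the integral $|z|<R_0\le R_0+1$ and $|y|\ge R_0+2$, so Lemma~\ref{lem:onelarge} is applicable to the factor $\tp(t-s,z,y)$ (with $z$ now playing the role of the first variable) and gives
\[
\tp(t-s,z,y)\le c_2\,H(t-s,z)(t-s)\nu(y-z).
\]
The remaining factor is controlled by $p(s,z,x)\le c_3\,s\,\nu(x-z)$ via Lemma~\ref{lem:properties_pt_nu}(d). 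Since $|x-z|\ge |x|-R_0>1$ and $|y-z|\ge |y|-R_0\ge 2$, Lemma~\ref{lem:properties_pt_nu}(c) gives $\nu(y-z)\nu(x-z)\le c_4\,\nu(y-x)$, so the integrand is bounded by $c_5\,H(t-s,z)V(z)(t-s)s\,\nu(y-x)$.

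The last point to verify is integrability of $H(t-s,z)V(z)$ on $|z|<R_0$: decomposing $H(t-s,z)V(z)=\kappa|z|^{-\alpha}+\kappa(t-s)^{\delta/\alpha}|z|^{-\alpha-\delta}$, both $z$-integrals converge by $\alpha<d$ and by $\alpha+\delta\le(d+\alpha)/2<d$ (which follows from $\delta\le(d-\alpha)/2$), producing a bound of order $1+(t-s)^{\delta/\alpha}$. An elementary $s$-integration on $(0,t)\subset(0,T]$ then yields
\[
\int_0^t (t-s)s\bigl(1+(t-s)^{\delta/\alpha}\bigr)\,\d s\le c_6(T)\,t.
\]
Combined with $p(t,x,y)\le c\,t\,\nu(y-x)$ (Lemma~\ref{lem:properties_pt_nu}(d)), this produces $\tp(t,x,y)\le c(T)\,t\,\nu(y-x)\le c(T)H(t,x)H(t,y)t\,\nu(y-x)$, finishing the argument. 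The only mild obstacle is the one-sided cutoff of \eqref{eq:Duhamelcut}: to invoke Lemma~\ref{lem:onelarge} on the factor appearing under the integral, one must first swap the roles of $x$ and $y$ using the symmetry of $\tp$, and one must check that both $|x-z|$ and $|y-z|$ exceed $1$ (which is precisely why the threshold $R_0+2$, rather than $R_0+1$, is placed in the hypothesis).
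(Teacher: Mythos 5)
Your proposal is correct and follows essentially the same route as the paper: reduce by symmetry, dispose of the case $|x|\le R_0+1$ via Lemma \ref{lem:onelarge}, and otherwise use the truncated perturbation formula with $\tp$ sitting on the factor whose second argument is the large variable, then combine Lemma \ref{lem:onelarge}, the bound $p(s,\cdot,\cdot)\le c\,s\,\nu(\cdot)$ from Lemma \ref{lem:properties_pt_nu}~(d), the convolution inequality of Lemma \ref{lem:properties_pt_nu}~(c), and the convergence of $\int_{|z|<R_0}V(z)H(s,z)\,\d z$. The only cosmetic difference is that you obtain the needed form of \eqref{eq:Duhamelcut} by swapping $x$ and $y$ via symmetry, whereas the paper invokes the other version of the Duhamel formula directly; the two are equivalent.
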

\begin{proof}
By symmetry of the kernel $\tp(t,x,y)$ and Lemma \ref{lem:onelarge} it suffices to consider only the case $R_0+1 < |x| \le |y|$. By \eqref{eq:Duhamelcut} and Lemmas \ref{lem:properties_pt_nu} (d) and \ref{lem:onelarge}, we have
\begin{align*}
\tp(t,x,y) &\le 2p(t,x,y) + 2\int_0^t \int_{|z|<R_0} p(t-s,x,z)V(z)\tp(s,z,y) \d{z} \d{s}\\
&\le c_1t\nur(y-x) + c_1 \int_0^t \int_{|z|<R_0} (t-s)\nur(z-x)  V(z) s\nur(y-z) H(s,z) \d{z} \d{s}\\
&\le c_1t\nur(y-x) + c_1 t^2\int_0^t \int_{|z|<R_0} \nur(y-z) \nur(z-x) V(z)H(s,z) \d{z} \d{s}.
\end{align*}
Since for $|z|<R_0$ we have $|y-z|, |z-x| > 1$, Lemma \ref{lem:properties_pt_nu} (c) implies that
\begin{align*}
\nur(y-z) \nur(z-x) \le c_2 \nur(y-x). 
\end{align*}
Hence,
\begin{align*}
\tp(t,x,y) &\le \left(c_1+c_1 T \int_0^t \int_{|z|<R_0} V(z)H(s,z) \d{z} \d{s} \right) t\nur(y-x).
\end{align*}
By definition of $H$, 
\[
\int_0^t \int_{|z|<R_0} V(z)H(s,z) \d{z} \d{s} \leq \kappa \left(T  \int_{|z|<R_0} |z|^{-\alpha} \d{z} +T^{1+\delta/\alpha} \int_{|z|<R_0} |z|^{-\alpha-\delta} \d{z}\right).
\]
Recall that $\alpha<d$, which implies that the first integral on the right hand side is finite. Since $\alpha \leq \alpha+\delta \leq \alpha + (d-\alpha)/2 = (d+\alpha)/2 < d$, the second integral converges as well.
Hence,
\begin{align*}
\tp(t,x,y) \le c_3 t \nur(y-x) \le c_3 H(t,x) H(t,y) t \nur(y-x)
\end{align*}
with $c_3=c_3(T)$, which completes the proof.
\end{proof}

\begin{proof}[Proof of Theorem \ref{th:heat_kernel}]
Fix $T>0$ and $R_0=R_0(T)$ as in \eqref{eq:R0_def}. If $|x|,|y| \leq R_0+2$, then the claimed estimate follows from Lemma \ref{lem:small_args} with $R=R_0+2$. If $|x| \vee |y| \ge R_0+2$, then by \eqref{eq:est_by_stable_hardy} and \eqref{eq:tilde_pa_est} we have
\[
\tp(t,x,y) \leq e^{|\sigma|T} \tpa(t,x,y) \leq c_1 H(t,x)H(t,y) \, t^{-d/\alpha}
\]
and, by Lemma \ref{lem:final_step},
\[
	\tp(t,x,y) \le c_2 H(t,x)H(t,y) \, t \, \nur(y-x),
\]
for every $t \in (0,T]$. The result follows then from Lemma \ref{lem:properties_pt_nu} (d). 
\end{proof}

\subsection{Lower estimate of the perturbed heat kernel and the ground state} \label{sec:last_result}

We will now prove our last main theorem. 

\begin{proof}[Proof of Theorem \ref{th:heat_kernel_and_ground_state}] 
(a) $\Rightarrow$ (b) Fix $R>0$ and take $T=(2R)^{\alpha}$. By \eqref{eq:eigenfunction_representation} and the estimate in part (a) applied to $|x| \leq T^{1/\alpha}$ and Lemma \ref{lem:properties_pt_nu} (d), 
\begin{align*}
\varphi(x) & = e^{ET} \int_{\R^d} \tp(T,x,y)\varphi(y) \d{y} \\
           & \geq c \frac{e^{ET}T^{\delta/\alpha}}{|x|^{\delta}} \int_{|y| < \frac{T^{1/\alpha}}{2}} p(T,x,y) \varphi(y) \d{y} \\
					 & \geq c_1 \frac{e^{ET}T^{(\delta-d)/\alpha}}{|x|^{\delta}} \int_{|y| < R} \varphi(y) \d{y} \\
					 & = c_1 \frac{e^{E(2R)^{\alpha}}}{(2R)^{d-\delta}} \left( \int_{|y| < R} \varphi(y) \d{y}\right) \frac{1}{|x|^{\delta}}
					   =  \frac{c_2}{|x|^{\delta}}, \quad |x|  \leq R.
\end{align*}
This gives the estimate in (b).

\smallskip
\noindent
(b) $\Rightarrow$ (a) We adapt the argument from \cite[Section 4.2]{KB-TG-TJ-DP-2019}. First we fix the notation: 
\[
\phi_t(x) = 1+ t^{\delta/\alpha}\varphi(x), \qquad \mu_t(\d z)=\phi_t^2(z)\d z, \qquad q(t,x,y)=\frac{\tp(t,x,y)}{\phi_t(x) \phi_t(y)}.
\]
Fix $T>0$. By \eqref{eq:eigenfunction_representation}, we have for $t \in (0,T]$, $x \in \R^d \setminus \left\{0\right\}$,
\begin{align} \label{eq:q_frombelow}
\int_{\R^d} q(t,x,z) \mu_t(\d z) = \frac{\int_{\R^d} \tp(t,x,z) \d z + t^{\delta/\alpha} e^{-Et} \varphi(x)}{\phi_t(x)} 
                                \geq \frac{1 + t^{\delta/\alpha} \varphi(x)}{\phi_t(x)} = 1. 
\end{align}
Recall that $H(t,x) = 1+ t^{\delta/\alpha}|x|^{-\delta}$. Observe that for $|x| \geq T^{1/\alpha}$ we have
\begin{align} \label{eq:gs_H_1}
\frac{H(t,x)}{\phi_t(x)} \leq 1+T^{\delta/\alpha}|x|^{-\delta} \leq 2,
\end{align}
while for $|x| \leq T^{1/\alpha}$,
\begin{align} \label{eq:gs_H_2}
\frac{H(t,x)}{\phi_t(x)} = \frac{1+ t^{\delta/\alpha}|x|^{-\delta}}{1+ t^{\delta/\alpha}\varphi(x)}\leq c_3,
\end{align}
by the estimate of the ground state in (b). Together with Theorem \ref{th:heat_kernel} and \eqref{eq:domination_free_kernels} this implies that 
\[
q(t,x,z) \leq c_4 p(t,x,z) \leq c_4 e^{|\sigma|T} p^{(\alpha)}(t,x,z), \quad t \in (0,T], \ x,z \in \R^d \setminus \left\{0\right\}.
\]
Hence, by using the scaling property \eqref{eq:sca}, \eqref{eq:q_frombelow} and by following the argument in \cite[Section 4.2, p.\ 38]{KB-TG-TJ-DP-2019}, we obtain that there are $r \in (0,1)$ and $R >1+4^{1/\alpha}$ such that
\begin{align} \label{eq:last_aux_1}
\int_{r \leq \frac{|z|}{t^{1/\alpha}} \leq R } q(t,x,z) \mu_t(\d z) \geq \frac{1}{2}, \quad |x| \leq (4t)^{1/\alpha}, \ x \neq 0, \ t \in (0,T]. 
\end{align}
We are now ready to give the proof of (a). By the symmetry of the kernel $\tp(t,x,y)$, it is enough to establish the estimate for $|x| \leq t^{1/\alpha}$, $t \in (0,T]$, $y \in \R^d$. 

Assume first that $|x| \leq (4t)^{1/\alpha}$ and $|y| \geq r(2t)^{1/\alpha}$. By the Chapman-Kolmogorov equation \eqref{eq:Ch-K},
\[
q(2t,x,y) \geq c_5 \int_{r \leq \frac{|z|}{t^{1/\alpha}} \leq R } q(t,x,z)q(t,z,y) \mu_t(\d z) \geq c_5\int_{r \leq \frac{|z|}{t^{1/\alpha}} \leq R } q(t,x,z)\frac{p(t,z,y)}{\phi_t(z)\phi_t(y)} \mu_t(\d z).
\] 
By Lemma \ref{lem:properties_pt_nu} (e) (here we use that $|x-z| \leq (4^{1/\alpha} + R) t^{1/\alpha}$) 
and Proposition \ref{lem:est_at_zero}, 
\[
\frac{p(t,z,y)}{\phi_t(z)\phi_t(y)} 
\geq c_6 \frac{p(t,x,y)}{(1+c_7 t^{\delta/\alpha} (1+|z|^{-\delta}))(1+c_7 t^{\delta/\alpha} (1+|y|^{-\delta}))} 
\geq c_6 \frac{p(t,x,y)}{(1+c_7 T^{\delta/\alpha} +r^{-\delta})^2}.
\]
Together with \eqref{eq:last_aux_1}, this implies that
\[
q(2t,x,y) \geq \frac{c_8 }{2(1+c_7 T^{\delta/\alpha} +r^{-\delta})^2}p(t,x,y),
\]
and, by using the two-sided sharp estimates in Lemma \ref{lem:properties_pt_nu} (d) 
(giving $p(t,x,y) \asymp p(2t,x,y)$), we see that
\begin{align} \label{eq:last_aux_2}
q(2t,x,y) \geq c_9 p(2t,x,y), \quad |x| \leq (4t)^{1/\alpha}, \ |y| \geq r(2t)^{1/\alpha}, \ t \in (0,T].
\end{align}
In particular,
\begin{align} \label{eq:last_aux_3}
q(t,x,y) \geq c_9 p(t,x,y), \quad |x| \leq (2t)^{1/\alpha}, \ |y| \geq rt^{1/\alpha}, \ t \in (0,T].
\end{align} 
Consider now the case $|x| \leq (2t)^{1/\alpha}$ and $|y| \leq r(2t)^{1/\alpha}$ (in particular, $|y| < (2t)^{1/\alpha}$). By the symmetry of the kernel $q(t,x,y)$ and \eqref{eq:last_aux_3}, we get
\[
q(2t,x,y) \geq c_5 \int_{r \leq \frac{|z|}{t^{1/\alpha}} \leq R } q(t,x,z)q(t,z,y) \mu_t(\d z) \geq c_{10} \int_{r \leq \frac{|z|}{t^{1/\alpha}} \leq R } q(t,x,z)p(t,z,y)\mu_t(\d z).
\]
One more use of Lemma \ref{lem:properties_pt_nu} (e), (d) and \eqref{eq:last_aux_1} as above leads us to the estimate
\[
q(2t,x,y) \geq c_{11} p(2t,x,y), \quad |x| \leq (2t)^{1/\alpha},\ |y| \leq r(2t)^{1/\alpha}, \ t \in (0,T].
\]
Finally, by combining this with \eqref{eq:last_aux_2} and by using \eqref{eq:gs_H_1}--\eqref{eq:gs_H_2}, we obtain the estimate in (a) and complete the proof of the theorem.
\end{proof}

\section{Applications to relativistic Coulomb model} \label{sec:relativistic}

Throughout this section we assume that $d= 3$, $\alpha = 1$ and $m>0$.  
We are now in a position to apply our results to relativistic Coulomb model. 
Let
\begin{align} \label{eq:relativistic_Coulomb}
H_m = \sqrt{-\Delta+m^2} - \frac{\kappa}{|x|}, \quad \text{where} \quad 0 < \kappa < \kappa^{*} := \frac{2}{\pi},
\end{align}
and let $\delta$ be the unique number such that 
\[
0 < \delta <1 \quad \text{and} \quad \kappa = \frac{2 \Gamma\left(\frac{1+\delta}{2}\right) \Gamma\left(\frac{3-\delta}{2}\right)}{\Gamma\left(\frac{\delta}{2}\right)\Gamma\left(1-\frac{\delta}{2}\right)} = (1-\delta) \tan\frac{\pi \delta}{2}. 
\]
Herbst \cite{IH-1977} and Weder \cite{Weder-1975} studied the structure of the spectrum of the operator $H_m$. It is known that $\spec_e(H_m) = [m,\infty)$,
\[
\spec_d(H_m) \subset \left[m \sqrt{1-\left(\frac{\kappa \pi}{2}\right)^2}, m\right),
\]
and $\spec_d(H_m)$ is infinite. 
The ground state eigenvalue $E_{m,0} := \inf \spec(H_m) = \inf \spec_d(H_m)$ is simple and the corresponding ground state eigenfunction $\varphi_{m,0}$ is strictly positive, see the discussion in Daubechies and Lieb \cite{ID-EL-1983}, and Daubechies \cite{ID-1984}.

\begin{corollary}\label{cor:relativistic}
Let $H_m$ be given by \eqref{eq:relativistic_Coulomb}. Any $L^2$-eigenfunction of $H_m$ is continuous on $\R^3 \setminus \left\{0\right\}$ and the following pointwise estimates hold.

\begin{itemize}
\item[(a)] \textup{(General upper estimate)} For any eigenfunction $\varphi_m$ of $H_m$ corresponding to eigenvalue $E_m < m$ there exists a constant $c >0$ such that
\[
|\varphi_m(x)| \le \frac{c }{|x|^{\delta}}, \quad 0 < |x| \leq 1,
\]
and for every $\eps >0$ there exists a constant $\widetilde c=\widetilde c(\eps) >0$ such that
\[
	|\varphi_m(x)| \le \widetilde c \, e^{-\big(\sqrt{m^2 - E_m^2} - \eps\big)|x|}, \qquad |x| \geq 1.
\]
\item[(b)] \textup{(Two-sided estimate of the ground state)} There exists $c \geq 1$ such that
\[
\frac{1}{c} \frac{1}{|x|^{\delta}} \leq \varphi_{m,0}(x) \leq \frac{c}{|x|^{\delta}}, \quad 0<|x| \leq 1, 
\]
and for every $\eps >0$ there exists $\widetilde c=\widetilde c(\eps) \ge 1$ such that
\[
\frac{1}{\widetilde c}e^{-\big(\sqrt{m^2 - E_{m,0}^2} + \eps\big)|x|}	\le \varphi_{m,0}(x) \le \widetilde c \, e^{-\big(\sqrt{m^2 - E_{m,0}^2} -\eps\big)|x|}, \quad |x| \geq 1. 
\]
\end{itemize}
\end{corollary}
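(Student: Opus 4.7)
The plan is to reduce Corollary \ref{cor:relativistic} to the general results of Sections \ref{sec:ef_est} and \ref{sec:last_result}. By Example \ref{ex:example_nu}(a), the L\'evy operator $L = -(\sqrt{-\Delta+m^2}-m)$ with $\alpha=1$ satisfies assumptions \eqref{A1} and \eqref{A2}, so writing $H_m = -L - V + m$ with $V(x) = \kappa/|x|$ and $\kappa < \kappa^* = 2/\pi$, any eigenfunction $\varphi_m$ with $H_m \varphi_m = E_m \varphi_m$ and $E_m < m$ is an eigenfunction of $H = -L - V$ corresponding to the negative eigenvalue $E := E_m - m$. Since all discrete eigenvalues lie in $\bigl[m\sqrt{1-(\kappa\pi/2)^2},m\bigr)$, one has $|E| = m - E_m \in (0, 2m)$, which is the range in which the resolvent calculations below are valid.

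First I would handle the behavior near the origin. Continuity of $\varphi_m$ on $\R^3 \setminus \{0\}$ and the bound $|\varphi_m(x)| \le c(1+|x|^{-\delta})$ are immediate from Proposition \ref{lem:est_at_zero}; restricting to $|x| \le 1$ yields the required $|x|^{-\delta}$ estimate in both (a) and (b). For the matching lower bound $\varphi_{m,0}(x) \ge c|x|^{-\delta}$ on $\{|x|\le 1\}$, I would invoke Theorem \ref{th:heat_kernel_and_ground_state}: the sharp finite-time heat kernel estimates proved for the relativistic operator with fractional Hardy potential in \cite{TJ-KK-KS-2022} give exactly condition (a) of that theorem, and its equivalence with (b) delivers the pointwise lower bound on the ground state.

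For the asymptotics at infinity, the crucial input is a two-sided estimate of the resolvent kernel $g_\lambda$ of $L$ for $\lambda \in (0, 2m)$. Writing
\[
\widehat g_\lambda(\xi) = \frac{1}{\lambda + \sqrt{|\xi|^2+m^2} - m} = \frac{\sqrt{|\xi|^2+m^2}+m-\lambda}{|\xi|^2 + \mu_\lambda^2}, \qquad \mu_\lambda := \sqrt{2m\lambda - \lambda^2},
\]
and Fourier-inverting (the second summand generates the Yukawa factor $(m-\lambda)(4\pi|x|)^{-1}e^{-\mu_\lambda|x|}$, while the first one can be treated by subordination or a contour shift) yields
\[
g_\lambda(x) \asymp |x|^{-2}\, e^{-\mu_\lambda |x|}, \qquad |x| \ge 1.
\]
For $\lambda = |E| = m - E_m$ the identity $\mu_\lambda^2 = m^2 - E_m^2$ holds. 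The upper estimate in (a) then follows from Theorem \ref{th:EFest}: for any small $\eps' > 0$,
\[
|\varphi_m(x)| \le c \sup_{|y| \le R} g_{|E|-\eps'}(x-y) \le \widetilde c\, e^{-\sqrt{m^2-(E_m+\eps')^2}\,|x|}, \qquad |x| \ge 1,
\]
and by continuity of $\eps' \mapsto \sqrt{m^2-(E_m+\eps')^2}$ at zero the exponent can be replaced by $\sqrt{m^2-E_m^2} - \eps$ for any prescribed $\eps > 0$, with the polynomial $|x|^{-2}$ absorbed into the same $\eps$-correction. The matching lower bound for $\varphi_{m,0}$ at infinity is obtained identically from the second half of Theorem \ref{th:EFest} together with the lower half of the asymptotic for $g_{|E_{m,0}-m|}$.

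The main obstacle is precisely the two-sided asymptotic for the relativistic resolvent $g_\lambda$ at infinity. Although conceptually elementary, it demands either a careful Fourier-inversion argument isolating the pole at $|\xi|=i\mu_\lambda$ and controlling the holomorphic remainder, or, equivalently, integration of the small-time heat kernel bounds collected in Example \ref{ex:example_nu}(a) and Lemma \ref{lem:properties_pt_nu} against the exponential bounds for long times available in \cite{TJ-KK-KS-2022}. Once these resolvent asymptotics are in place, everything else is a routine specialization of Theorems \ref{th:EFest} and \ref{th:heat_kernel_and_ground_state} to $d=3$, $\alpha=1$.
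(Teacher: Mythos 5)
Your overall route coincides with the paper's: reduce to $H=H_m-m$ with negative eigenvalue $E=E_m-m$, obtain continuity and the upper bound $|x|^{-\delta}$ near the origin from Proposition \ref{lem:est_at_zero}, obtain the matching lower bound for the ground state from Theorem \ref{th:heat_kernel_and_ground_state} combined with the heat kernel lower bound of \cite{TJ-KK-KS-2022}, and obtain the behaviour at infinity from Theorem \ref{th:EFest} together with two-sided bounds on the resolvent kernel $g_\lambda$ and a continuity argument in the exponent. The only genuine divergence is the source of the resolvent bounds, and that is precisely where your argument is incomplete: you assert $g_\lambda(x)\asymp |x|^{-2}e^{-\sqrt{2m\lambda-\lambda^2}\,|x|}$ for $|x|\ge 1$ and explicitly concede that you have not proved it. The paper does not prove it either; it quotes the two-sided estimates (II.19)--(II.20) of Carmona, Masters and Simon \cite{RC-WM-BS-1990}, which carry an $\eps$-loss in the exponent and are all that is needed since the corollary only claims exponential rates up to $\eps$. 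So your gap is fillable by citation rather than by the Fourier-inversion programme you sketch. Note, moreover, that the sharp form you claim is wrong: in $d=3$ your own decomposition $\widehat g_\lambda(\xi)=\big(\sqrt{|\xi|^2+m^2}+m-\lambda\big)/(|\xi|^2+\mu_\lambda^2)$ contains the Yukawa term $(m-\lambda)/(|\xi|^2+\mu_\lambda^2)$, whose inverse transform is $(m-\lambda)e^{-\mu_\lambda|x|}/(4\pi|x|)$, so $g_\lambda(x)\asymp |x|^{-1}e^{-\mu_\lambda|x|}$ at infinity and an upper bound of order $|x|^{-2}e^{-\mu_\lambda|x|}$ is false. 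This error is harmless for the corollary (the polynomial is absorbed into the $\eps$), but it should not be stated as a two-sided asymptotic.

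A second, smaller gap: the chain $|\varphi_m(x)|\le c\sup_{|y|\le R}g_{|E|-\eps'}(x-y)\le \widetilde c\,e^{-(\cdots)|x|}$ is not valid on all of $\{|x|\ge 1\}$, because for $1\le |x|\le R+1$ the supremum involves arguments $x-y$ with $|x-y|<1$, where $g_{|E|-\eps'}$ is singular and the large-distance resolvent bound does not apply. The paper handles this intermediate annulus separately, using boundedness and continuity of $\varphi_m$ away from the origin (and, for the ground-state lower bound on $1\le|x|\le 2$, its continuity and strict positivity). Both issues are repairable with the arguments just indicated, but as written your proposal leaves the resolvent input unproven and the intermediate range uncovered.
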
 

\begin{proof}
We apply our results to the operator $H=H_m-m$. In particular, if $\varphi_m$ is an eigenfunction of $H_m$ corresponding to an eigenvalue $E_m < m$, then $H \varphi_m = E \varphi_m$, where $E = E_m-m <0$.

We first show the upper bound for $|x| \geq 1$. Fix $\eps > 0$. By the estimates of the resolvent kernel $g_{\lambda}$ established in \cite[(II.19)--(II.20) on p.\ 128--129]{RC-WM-BS-1990} we obtain that for every $\lambda \in (0,m)$ there is a constant $c_1=c_1(\eps, \lambda) \geq 1$ such that
\begin{align}\label{eq:resolvent_est}
c_1^{-1} e^{-(\sqrt{2m\lambda - \lambda^2} + \eps/2)  |x|} \leq g_{\lambda}(x) \leq c_1 e^{-(\sqrt{2m\lambda - \lambda^2} - \eps/2)  |x|},  \quad |x| \geq 1.
\end{align}
Moreover, we observe that $\sqrt{2m\lambda - \lambda^2} = \sqrt{m^2 - (m-\lambda)^2}$, $|E| = m-E_m$, and that by the continuity of the map $[0,m] \ni u \mapsto \sqrt{m^2 - u^2}$ we have the following:\ there is $\bar{\eps} \in \big(0,(m-E_m)\wedge 1 \big)$ such that
\[
\sqrt{m^2 - (m-|E|+\bar{\eps})^2} = \sqrt{m^2 - (E_m+\bar{\eps})^2} \geq \sqrt{m^2 - E_m^2} - \eps/2.
\]
Hence, by the upper bounds in Theorem \ref{th:bound_states_main} (b) and \eqref{eq:resolvent_est}, there exists $R = R(\bar{\eps})$ such that for every $|x| \geq R+1$ we have
\[
|\varphi_m(x)| \leq c_2 \sup_{|y| \leq R} g_{|E|-\bar{\eps}}(x-y) \leq c_3 e^{-(\sqrt{m^2 - (m-|E|+\bar{\eps})^2} - \eps/2)(|x|-R)} 
               \leq c_4 e^{-(\sqrt{m^2 - E_m^2} - \eps)|x|}.
\]
On the other hand, since $\varphi_m$ is continuous on $\R^d \setminus \left\{0\right\}$, for $1 \leq |x| \leq R+1$ we get
\[
|\varphi_m(x)| \leq c_5 \sup_{1 \leq |x| \leq R+1} |\varphi_m(x)| \leq c_6 e^{-(\sqrt{m^2 - E_m^2} - \eps)|x|},
\] 
with $c_6 := c_5 \sup_{1 \leq |x| \leq R+1} |\varphi_m(x)| e^{\sqrt{m^2 - E_m^2}(R+1)}$. 

The lower bound of the ground state for $|x| \geq 2$ follows from the lower estimates in 
Theorem \ref{th:bound_states_main} (b) and \eqref{eq:resolvent_est} by similar arguments. For $1 \leq |x| \leq 2$ we just use the continuity and the strict positivity of the ground state, and the inequality $\exp(-\sqrt{m^2 - E_m^2}) \leq 1$. 

The estimates for $|x| \leq 1$ follow directly from Theorem \ref{th:bound_states_main} (a), Theorem \ref{th:heat_kernel_and_ground_state} and the lower estimate of the heat kernel $\tp(t,x,y)$ in \cite[Theorem 5.1]{TJ-KK-KS-2022} (our $\tp(t,x,y)$ is $e^{mt} \tilde p^m_{V^0_{\delta}}(t,x,y)$ in the notation of the quoted paper). 
\end{proof}

\begin{remark}
In principle, the constants $c$ and $\widetilde c$ in the above corollary depend on parameters $m$ and $\kappa$, as well as on $E_m$ and $\varphi_m$. The statements and proofs of our main theorems allow one to track this dependence up to some reasonable level. 
\end{remark}

\end{document}